\documentclass[10pt,english]{article}
\usepackage[dvipsnames]{xcolor}
\usepackage{bbm}
\usepackage{fullpage}
\usepackage{tabularx,amsmath,amssymb,amsthm,graphicx,paralist,subfigure,pdfpages,algorithm,algpseudocode,paralist,multirow,comment,appendix,caption}
\usepackage[bmargin=1in, tmargin=1in, lmargin=1in, rmargin=1in]{geometry}
\usepackage[dvipsnames]{xcolor}
\usepackage[autostyle]{csquotes}
\usepackage[colorlinks]{hyperref}
\hypersetup{
citecolor = Blue,
linkcolor = CMU_red
}

\usepackage[noadjust]{cite}
\usepackage{verbatim}
\usepackage{mathtools}

\usepackage{makecell}
\usepackage{pifont}
\newcommand{\cmark}{\text{\ding{51}}}
\newcommand{\xmark}{\text{\ding{55}}}

\newtheorem{lem}{Lemma} 
\newtheorem{theorem}{Theorem}
\newtheorem{defi}{Definition}

\newtheorem{prop}{Proposition}

\newtheorem{rmk}{Remark}
\newtheorem{assump}{Assumption}

\definecolor{CMU_red}{RGB}{187,0,0}
\definecolor{CMU_gray}{RGB}{224,224,224}
\definecolor{CMU_dark_gray}{RGB}{102,102,102}
\definecolor{CMU_gold}{RGB}{170,102,0}
\definecolor{CMU_teal}{RGB}{0,102,119}
\definecolor{CMU_blue}{RGB}{34, 68, 119}
\definecolor{CMU_green}{RGB}{0, 136, 85}
\definecolor{CMU_dark_green}{RGB}{34, 68, 51}

\def\mbb{\mathbb}%R
\def\mb{\mathbf}%vector
\def\mc{\mathcal}%set

\def\wh{\widehat}

\def\ol{\overline}
\def\ul{\underline}
\def\dom{\mbox{dom}}
\def\bds{\boldsymbol}

\def\ra{\rightarrow}
\def\R{\mbb{R}}
\def\a{\alpha}
\def\E{\mbb{E}}
\def\P{\mbb{P}}
\def\F{\mc{F}}
\def\s{\mb{s}}
\def\l{\left\langle}
\def\r{\right\rangle}
\def\n{\nabla}
\def\x{\mb x}
\def\y{\mb y}

\def\J{\mb J}
\def\I{\mb I}
\def\X{\bds\xi}
\def\z{\mb z}
\def\i{\mathbbm{1}}
\def\u{\mb u}
\def\bxi{\bds\xi}
\def\nf{\n\mb{f}}
\def\W{\mb W}
\def\bl{\left\langle}
\def\br{\right\rangle}
\def\argmin{\mbox{argmin}}
\def\p{\normalfont\text{prox}}
\def\bp{\textbf{\mbox{prox}}_{\alpha h}}

\def\dom{\mbox{dom}}

\def\gp{\mb{s}}
\def\sgp{\mb{g}}
\def\ltrue{\lambda_*}
\def\Wtrue{\W_*}
\def\vp{\varphi}

\begin{document}
\title{{A Stochastic Proximal Gradient Framework for  Decentralized Non-Convex Composite Optimization: Topology-Independent Sample Complexity and Communication Efficiency}
\thanks{The work of RX and SK has been partially supported by NSF under award \#1513936. The work of UAK has been partially supported by NSF under awards \#1903972 and \#1935555.}}
\author{
Ran Xin\thanks{Department of Electrical and Computer Engineering (ECE), Carnegie Mellon University, Pittsburgh, PA 15213, USA; Emails: \texttt{\{ranx,soummyak\}@andrew.cmu.edu}.}   
\qquad
Subhro Das\thanks{MIT-IBM Watson AI Lab, IBM Research, Cambridge, MA 02142, USA; Email:
\texttt{Subhro.Das@ibm.com}} 
\qquad
Usman A. Khan\thanks{Department of ECE, Tufts University, Medford, MA 02155, USA; Email: \texttt{khan@ece.tufts.edu}.}       \qquad
Soummya Kar\footnotemark[2]
}

\date{}
\maketitle

\begin{abstract}
Decentralized optimization is a promising parallel computation paradigm for large-scale data analytics and machine learning problems defined over a network of nodes. This paper is concerned with decentralized non-convex composite problems with population or empirical risk. In particular, the networked nodes are tasked to find an approximate stationary point of the average of local, smooth, possibly non-convex risk functions plus a possibly non-differentiable extended valued convex regularizer. Under this general formulation, we propose the first provably efficient, stochastic proximal gradient framework, called \texttt{ProxGT}.
Specifically, we construct and analyze several instances of \texttt{ProxGT} that are tailored respectively for different problem classes of interest. Remarkably, we show that the sample complexities of these instances are network topology-independent and achieve linear speedups compared to that of the corresponding centralized optimal methods implemented on a single node. 
% Numerical experiments are included to verify our theoretical results.

\end{abstract}

{
  \hypersetup{linkcolor=black}
  \tableofcontents
}

\section{Introduction}
Decentralized optimization \cite{DGD_nedic}, also known as distributed optimization over graphs, is a general parallel computation model for minimizing a sum of cost functions distributed over a network of nodes without a central coordinator. This cooperative minimization paradigm, built upon local communication and computation, has numerous applications in estimation, control, adaptation, and learning problems that frequently arise in multi-agent systems~\cite{DGD_Kar,dopt_control_nedich,diffusion_Chen,SPM_Xin}. In particular, the sparse and localized peer-to-peer information exchange pattern in decentralized networks substantially reduces the communication overhead on the parameter server in the centralized networks, thus making decentralized optimization algorithms especially appealing in large-scale data analytics and machine learning tasks~\cite{DSGD_nips,SGP_ICML,DLam_Yuan}.  

\subsection{Background}
In this paper, we study the following decentralized \emph{non-convex composite} optimization problem defined over a network of~$n$ nodes:
\begin{align}\label{P}
\min_{\x\in\R^p}\Psi(\x) 
:= F(\x) + h(\x)
\qquad
\text{such that}
\qquad
F(x) := \frac{1}{n}\sum_{i=1}^n f_i(\x).
\end{align}
Here, each~$f_i:\R^p\ra\R$ is $L$-smooth, possibly non-convex, and is only locally accessible by node~$i$, while $h:\R^p\ra\R\cup\{+\infty\}$ is convex, possibly non-differentiable, and is commonly known by all nodes. Each~$f_i$ is a cost function associated with local data at node~$i$ while~$h$ serves as a regularization term that is often used to impose additional problem structures such as convex constraints\footnote{The domain of~$h$ acts effectively as the constraint set of Problem~\eqref{P}.} and/or sparsity; simple examples of~$h$ include the~$\ell_1$-norm or the indicator function of a convex set. 
The communication over the networked nodes is abstracted as a directed graph~${\mc{G} := (\mc{V},\mc{E})}$, where~${\mc{V} := \{1,\cdots,n\}}$ denotes the set of node indices and~${\mc{E}}\subseteq\mc{V}\times\mc{V}$ collects ordered pairs~$(i,r)$,~${i,r \in \mc{V}}$, such that node~$r$ sends information to node~$i$. We adopt the convention that~$(i,i)\in\mc{E},\forall i\in\mc{V}$.
Our focus in this paper is on the following formulations of the local costs~$\{f_i\}_{i=1}^n$ that commonly appear in the context of statistical learning~\cite{SP_book}:
\begin{itemize}

\item \textbf{Population risk:} In this case, each~$f_i$ in Problem~\eqref{P} is defined as
\begin{align}\label{P_online}
f_i(\x) := \E_{\bxi_i\sim\mc{D}_i}[G_i(\x,\bxi_{i})],
\end{align}
where~$\bxi_{i}$ is a random data vector supported on~$\Xi_i\subseteq\R^q$ with some unknown probability distribution~$\mc{D}_i$ and~$G_i:\R^p\times\R^q\ra\R$ is a Borel function. The stochastic formulation~\eqref{P_online} often corresponds to \emph{online} scenarios such that samples are generated from the underlying data stream in real time at each node~$i$, in order to construct stochastic approximation of~$\n f_i$ for the subsequent optimization procedure~\cite{SA}.

\item \textbf{Empirical risk:} As a special case of~\eqref{P_online}, i.e., when~$\bxi_i$ has a finite support set~$\Xi_i:=\{\bxi_{i,(1)},\cdots,\bxi_{i,(m)}\}$ for some~$m\geq1$, each~$f_i$ takes the deterministic form of
\begin{align}\label{P_offline}
f_i(\x) := \frac{1}{m}\sum_{s=1}^m G_i\big(\x,\bxi_{i,(s)}\big).    
\end{align}
As an alternate viewpoint, the formulation~\eqref{P_offline} may be considered as the sample average approximation of~\eqref{P_online}, where $\{\bxi_{i,(1)},\cdots,\bxi_{i,(m)}\}$ take the role of \emph{offline} samples generated from the distribution~$\mc{D}_i$~\cite{SP_book}. 
% in fact, it can shown that~\eqref{P_offline} converges to~\eqref{P_online} as~$m\ra\infty$ under certain conditions~\cite{SP_book}. 
We are interested in modern-day big-data scenarios, where~$m$ is very large and thus stochastic gradient methods are often preferable over exact gradient ones that use the entire local data per update.

\end{itemize}

The above formulations are quite general and has found applications in, e.g., sparse non-convex linear models~\cite{LR_NCVX}, principle component analysis~\cite{MP_scutari}, and matrix factorization~\cite{MF_chi}. Our goal in this paper is thus on the design and analysis of efficient decentralized \emph{stochastic} gradient algorithms to find an $\epsilon$-stationary point of the global \emph{composite} function~$\Psi$ in Problem~\eqref{P} under both population risk~\eqref{P_online} and empirical risk~\eqref{P_offline}. 

\subsection{Literature Review}
The last decade has witnessed a growing research interest and literature in the area of decentralized optimization. For convex (composite) problems, we refer the readers to, e.g.,~\cite{PD_Xu,PGD_Yuan,PD_Lu,PGD_Yan,Prox_GT_Di_scutari,Network-DANE,GTVR,SPM_Xin,free_bits,acc_pd_xu,AccDVR,AB_Xin,PushPull_Pu,tutorial_nedich,PIEEE_Xin,gradslide_lan} and the references therein, for unifying frameworks and connections between the existing methods. On the other hand, the work on decentralized methods for non-convex composite problems is fairly limited. In the following, we review the existing results that are closely related to Problem~\eqref{P} under either~\eqref{P_online} or~\eqref{P_offline}. 

\vspace{0.2cm}
\noindent
\textbf{Decentralized stochastic smooth non-convex optimization.} The special case~$h(\x) = 0$ of~Problem~\eqref{P} has been relatively well-studied. For this smooth formulation, variants of decentralized stochastic gradient descent (\texttt{DSGD}), e.g., \cite{DSGD_nips,SGP_ICML,DLam_Yuan,DSGD_vlaski_2}, admit simple implementations yet provide competitive practical performance against centralized methods in homogeneous environments like data centers. When the data distributions across the network become heterogeneous, the performance of \texttt{DSGD} in both practice and theory degrades significantly~\cite{GTVR,SPM_Xin,SED_Pu,SED2_Yuan,DSGD_Pu}. To address this issue, stochastic methods that are robust to heterogeneous data have been proposed, e.g., \texttt{D2}~\cite{D2} that is derived from primal-dual formulations~\cite{EXTRA,EXTRA_revisit,NIDS,SED} and \texttt{GT-DSGD}~\cite{improved_DSGT_Xin,GNSD} that is based on gradient tracking~\cite{NEXT_scutari,GT_CDC,MP_Pu,harnessing,DIGing}. Reference~\cite{optimal_dsgt}
provides a lower bound for a class of population risk minimization problems
and shows that combining \texttt{GT-DSGD} with multi-round accelerated consensus attains this lower bound. Built on top of \texttt{D2} and \texttt{GT-DSGD}, recent work, e.g.,~\cite{GTSAGA_NCVX,GT-SARAH,D_Get,D-SPIDER-SFO,HSGD_Xin} further leverages variance reduction techniques~\cite{spiderboost,spider,HSARAH} to achieve accelerated convergence. These results are certainly promising, but they are not applicable to the general composite case where~$h(\x) \neq 0$. 

\vspace{0.2cm}
\noindent
\textbf{Decentralized non-convex composite optimization.} The general case~$h(\x) \neq 0$ in Problem~\eqref{P} is significantly less explored in the existing literature. The first algorithmic framework for decentralized non-convex composite optimization is due to \cite{NEXT_scutari}, where $h$ is handled in a successive convex approximation scheme. Reference \cite{Prox_DGD} presents decentralized proximal gradient descent which tackles~$h$ via proximal mapping. These works~\cite{NEXT_scutari,Prox_DGD}, however, require the gradient of~$F$ and the subdifferential of~$h$ to be uniformly bounded. This bounded (sub)gradient assumption is later removed in~\cite{MP_scutari,PDP0_Luo}, where compression and directed graphs are also considered respectively. A decentralized Frank-Wolfe method is proposed in~\cite{GT_Wai} to 
handle the case where $h$ is an indicator function of a convex compact set. We note that the aforementioned results~\cite{NEXT_scutari,Prox_DGD,MP_scutari,GT_Wai,PDP0_Luo} are exact gradient methods, which are in general not applicable to the population risk~\eqref{P_online} and also may not be efficient in the empirical risk setting~\eqref{P_offline} when the local data size~$m$ is relatively large. Towards stochastic gradient methods, \cite{PDSGD_NCVX} analyzes a projected \texttt{DSGD} type method for problems with compact constraint set. Reference~\cite{DSGD_Brian} establishes the \emph{asymptotic} convergence of \texttt{DSGD} for a family of non-convex non-smooth functions that satisfy certain coersive property.  
A recent work~\cite{PDP_Luo} presents \texttt{SPPDM}, a decentralized stochastic proximal primal-dual method, and provides related convergence guarantees under the assumption that the epigraph of~$h$ is a polyhedral set.

\vspace{0.2cm}
To the best of knowledge, in the literature of decentralized optimization, \emph{there is no non-asymptotic sample and communication complexity results of stochastic gradient methods for the non-convex composite problem with a general convex non-differentiable regularizer $h$.} We address this gap in this paper. %Additionally, we note that our analysis of composite problems naturally lead to improved rates for non-composite problems where~$h=0$ by utilizing efficiently multiple consensus updates per iteration.

\subsection{Our Contributions}
We develop a unified stochastic proximal gradient tracking framework, called \texttt{ProxGT}, for designing and~analyzing decentralized methods for the general non-convex composite problem. \texttt{ProxGT} allows flexible construction of local gradient estimators, where a suitable one may be chosen in light of the underlying problem specifications and practical applications. 
We highlight several important aspects of \texttt{ProxGT} in the following.

\begin{itemize}
\item \emph{Algorithm construction:} For definiteness, we present three instantiations of \texttt{ProxGT}. 
For the general population risk, we develop \texttt{ProxGT-SA} by using the minibatch stochastic approximation technique~\cite{mirror_prox_Lan}. %When $\n G$ satisfies a mean-squared smoothness property~\cite{lowerbound_sgd}, we leverage \texttt{SARAH} type variance reduction techniques~\cite{sarah_ncvx,spiderboost,book_lan} to obtain \texttt{ProxGT-SR-O} and \texttt{ProxGT-SR-E} for the population and empirical risk respectively.
Leveraging \texttt{SARAH} type variance reduction schemes~\cite{sarah_ncvx,spiderboost,book_lan}, we next provide two alternate algorithms, named \texttt{ProxGT-SR-O} and
\texttt{ProxGT-SR-E}, for the population and empirical risk respectively that outperform \texttt{ProxGT-SA} when a mean-squared smoothness property holds~\cite{lowerbound_sgd}.

\item \emph{Complexity results:} We establish the sample and communication complexities of 
the proposed \texttt{ProxGT-SA}, \texttt{ProxGT-SR-O}, and
\texttt{ProxGT-SR-E} algorithms to find an $\epsilon$-stationary solution; see Table~\ref{comp_oc} for a summary. Remarkably, we show that their sample complexities at each node are network topology-independent and are $n$ times smaller than that of the centralized \emph{optimal} algorithms implemented on a single node for the corresponding problem classes. In other words, \texttt{ProxGT-SA}, \texttt{ProxGT-SR-O}, and \texttt{ProxGT-SR-E} achieve a topology-independent linear speedup compared to their respective optimal centralized counterparts.

\item \emph{Analysis techniques:} Our convergence analysis is developed in a unified manner and can be used to analyze other instances of the \texttt{ProxGT} framework. In particular, we establish a novel \emph{stochastic descent inequality} for the non-convex composite objective $\Psi$ and a new \emph{consensus error bound} by carefully handling the proximal mapping. These intermediate technical results are of independent interest and may be used in analyzing other decentralized stochastic proximal first-order methods for non-convex composite problems.

\item \textit{Special cases:} For the special case $h = 0$, \texttt{ProxGT-SR-E} and \texttt{ProxGT-SR-O} also constitute improvements over the state-of-the-art decentralized variance-reduced methods \texttt{GT-SARAH}~\cite{GT-SARAH} and \texttt{GT-HSGD}~\cite{HSGD_Xin} in the following sense. For the empirical risk, \texttt{GT-SARAH} attains the optimal centralized sample complexity when the local sample size $m$ is large enough. Similarly, for the population risk, \texttt{GT-HSGD} is optimal when the required accuracy is small enough. \texttt{ProxGT-SR-O} and \texttt{ProxGT-SR-E} improve these regime restrictions and attain the corresponding optimal centralized complexities by performing multiple rounds of (accelerated) consensus updates per iteration.

% Our results constitute provably-improved, network topology-independent performance under several special cases. In particular, choosing~$h=0$ in~\texttt{ProxGT-SR-O} leads to an improvement over the state-of-the-art~GT-HSGD method and is the first network topology-independent result for decentralized non-convex population risk problems. Similarly, choosing~$h=0$ in~\texttt{ProxGT-SR-O} leads to an improvement over the state-of-the-art~GT-SARAH method and is the first network topology-independent result for decentralized non-convex empirical risk problems. 
\end{itemize}

{\renewcommand{\arraystretch}{2}
\begin{table*}[!ht]
\footnotesize
\caption{A summary of the sample and communication complexities of the instances of \texttt{ProxGT} studied in this paper for finding an $\epsilon$-stationary point of the global composite function~$\Psi$. In the table,~$n$ is the number of the nodes,~$(1 - \ltrue)\in(0,1]$ is the spectral gap of the weight matrix associated with the network, $L$ is the smoothness parameter for the risk functions, $\Delta$ is the fucntion value gap, $\nu^2$ is the stochastic gradient variance under the expected risk, $m$ is the local sample size under the empirical risk. The MSS column indicates whether the algorithm in question requires the mean-squared smoothness assumption.}
\label{comp_oc}
\vspace{-0.5cm}
\begin{center}
\begin{tabular}{|c|c|c|c|c|r|}
\hline
\textbf{Algorithm} & \textbf{Sample Complexity at Each Node} & \textbf{Communication Complexity} & \textbf{MSS} & \textbf{Remarks}\\ \hline
\texttt{ProxGT-SA} & $\mc{O}\left(\dfrac{L\Delta\nu^2}{n\epsilon^4}\right)$ & $\mc{O}\left(\dfrac{L\Delta}{\epsilon^2}\cdot\dfrac{\log n}{\sqrt{1-\ltrue}}\right)$ & \xmark & Population Risk~\eqref{P_online} 
\\ \hline
\texttt{ProxGT-SR-O} & $\mc{O}\left(\dfrac{L\Delta\nu}{n\epsilon^3} + \dfrac{\nu^2}{n\epsilon^2}\right)$ & $\mc{O}\left(\left(\dfrac{L\Delta}{\epsilon^2}+\dfrac{\nu}{\epsilon}\right)\dfrac{\log n}{\sqrt{1-\ltrue}}\right)$ & \cmark & Population Risk~\eqref{P_online}  \\ \hline
\texttt{ProxGT-SR-E} &  $\mc{O}\left(\dfrac{L\Delta}{\epsilon^2}\max\left\{\sqrt{\dfrac{m}{n}},1\right\}+\max\left\{m,\sqrt{nm}\right\}\right)$ & $\mc{O}\left(\left(\dfrac{L\Delta}{\epsilon^2}+\sqrt{nm}\right)\dfrac{1}{\sqrt{1-\ltrue}}\right)$ & \cmark & Empirical Risk~\eqref{P_offline} \\ \hline
\end{tabular}
\end{center}
%\vspace{-0.2cm}
\end{table*}}

\subsection{Notation}
The set of positive real numbers is denoted by~$\R^+$. For an integer~$z\geq1$, we denote~$[z]:= \{1,\cdots,z\}$.
The floor and ceiling function are written as~$\lfloor\cdot\rfloor$ and~$\lceil\cdot\rceil$ respectively.
We use lowercase bold letters to denote vectors and uppercase bold letters to denote matrices.
The~$d\times d$ identity matrix is denoted by $\mb{I}_d$, while the~$d$-dimensional column vectors of all ones and zeros are represented by $\mb{1}_d$ and $\mb{0}_d$ respectively. For a matrix~$\mb{X}\in\R^{d\times d}$, its~$(i,r)$-th entry is denoted by~$[\mb{X}]_{i,r}$.
We use $\mb{X}\otimes \mb{Y}$ to denote the Kronecker product of two matrices~$\mb{X}$ and~$\mb{Y}$.
The Euclidean norm of a vector or the spectral norm of a matrix is denoted by~$\|\cdot\|$. 

For an extended valued function $h:\R^p\ra\R\cup\{+\infty\}$, we denote $\dom(h) := \{\x: h(\x) < + \infty\},$ 
and $h$ is said to be proper if $\dom(h)$ is nonempty. For~$\x\in\dom(h)$, we denote the subdifferential of $h$ at $\x$ by $\partial h(\x)$,~i.e.,
$\partial h(\x) := \big\{ \mb{u} : h(\y) \geq h(\x) + \l \mb{u}, \y - \x\r, \forall \y\in\dom(h) \big\}.$
The proximal mapping of~$h$ is defined as 
\begin{align}\label{def_prox}
\p_{h}(\x) := \argmin_{\mb{u}\in\R^p}\left\{\frac{1}{2}\|\mb{u} - \x\|^2 + h(\u)\right\}.    
\end{align}
% It is well-known that~$\p_{h}(\x)$ is a singleton if~$r$ is proper, closed, and convex~\cite{book_Beck2}. We denote the indicator of a convex set~$C$ as~$I_C$:
% by convention~\cite{book_Beck2}, given~$\x\in\R^p$,~$I_C(\x) := 0$ for~$\x\in C$ and~$I_C(\x) := +\infty$ otherwise. 
%Clearly, the proximal mapping of~$I_C$ is the orthogonal projection onto~$C$. 

We work with a rich enough probability triple~$(\Theta,\mc{F},\P)$, where all random objects are defined properly. Given a sub-$\sigma$-algebra~$\mc{H}\subseteq\mc{F}$ and a random vector~$\x$, we write~$\x\in\mc{H}$ if~$\x$ is~$\mc{H}$-measurable. We use $\sigma(\cdot)$ to denote the~$\sigma$-algebra generated by the argument events and/or random vectors. 
% We denote the indicator of an event~$A\in\mc{F}$ as~$\i_A$: by convention~\cite{probability_with_martinagles}, given~$\theta\in\Theta$,~$\i_A(\theta) := 1$ for~$\theta\in A$ and~$\i_A(\theta) := 0$ otherwise. We denote the empty set by~$\phi$. 

\subsection{Roadmap}
The remainder of the paper is organized as follows. Section~\ref{sec_asp} formulates the problems. Section~\ref{sec_alg} develops the proposed algorithmic framework and its instances of interest in this paper. Section~\ref{sec_main} 
presents the main convergence results of the proposed algorithms and discuss their implications. Section~\ref{sec_concl} concludes the paper.
Detailed proofs are provided in the appendix.

\section{Problem Formulation}\label{sec_asp}
% This section formalizes the optimization and network model considered in this paper.
\subsection{The Non-Convex Composite Model}
We make the following assumption on the objective functions.
\begin{assump}[\textbf{Functions}]\label{asp_f}
In Problem~\eqref{P}, the following statements hold:
\begin{enumerate}[(a)]
\item $h:\R^p\ra\R\cup\{+\infty\}$ is proper, closed, and convex;
\label{asp_f_r}
\item Each $f_i:\R^p\ra\R$ is~$L$-smooth, i.e.,~$\|\n f_i(\x)-\n f_i(\y)\|\leq L\|\x-\y\|$,~$\forall \x,\y\in\R^p$, for some~$L\in\R^+$;
\item $\Psi$ is bounded below, i.e.,~$\ul{\Psi}:=\inf_{\x\R^p}\Psi(\x)>-\infty$.
\end{enumerate}
\end{assump}
\noindent
Assumption~\ref{asp_f} describes the standard \emph{non-convex composite model}~\cite[Section 10.1]{book_Beck2}. A simple example of the extended real-valued function~$h$ that satisfies Assumption~\ref{asp_f}\eqref{asp_f_r} is the indicator of a nonempty, closed, and convex set in~$\R^p$. 
Under Assumption~\ref{asp_f}, we say that a point~$\wh{\x}\in\dom(h)$ is \emph{stationary} for Problem~\eqref{P} if
\begin{align}\label{def_stat}
-\n F(\wh{\x}) \in \partial h(\wh{\x}).
\end{align}

\begin{rmk}
It is shown in~\cite[Theorem 3.7.2]{book_Beck2} that the stationary condition~\eqref{def_stat} is a necessary condition for a point~$\wh{\x}$ to be a local optimal solution of Problem~\eqref{P}. 
\end{rmk}

Based on the definition of the proximal mapping~\eqref{def_prox}, it can be shown that this stationarity condition is equivalent to a fixed point equation, i.e.,~$\wh{\x}\in\R^p$ is stationary for Problem~\eqref{P} if and only if
\begin{align}\label{fix_point}
\wh{\x} = \p_{\alpha h}\big(\wh{\x} - \alpha \n F(\wh{\x})\big),
\qquad \forall \alpha > 0.
\end{align}
In view of~\eqref{fix_point}, we define the \emph{gradient mapping} for Problem~\eqref{P}:
\begin{align}\label{GP}
\gp(\x) := \frac{1}{\a}\Big(\x - \p_{\alpha h}\big(\x - \alpha \n F(\x)\big)\Big), \qquad\forall\x\in\dom(h),
\end{align}
where~$\a>0$. We note that the gradient mapping~$\gp(\x)$ can be viewed as a generalized gradient of~$\Psi$ at~$\x$ in the sense that~$\gp(\x) = \n F(\x)$ if~$h=0$. The size of~$\gp(\cdot)$ thus serves as a natural measure for the approximate stationarity of a solution~\cite{book_Beck2,book_lan}. 

\begin{defi}[\textbf{$\epsilon$-stationarity}]\label{def_eps_stat}
Let Assumption~\ref{asp_f} hold. Then a random vector~$\x\in\dom(h)$ is said to be an $\epsilon$-stationary solution for Problem~\eqref{P} if~$\E[\|\gp(\x)\|^2]\leq\epsilon^2$, where~$\s(\cdot)$ is defined in~\eqref{GP}.
\end{defi}

\subsection{The Network Model}
We make the following assumption on the directed graph~$\mc{G} = (\mc{V},\mc{E})$ which characterizes the decentralized communication between the networked nodes.

\begin{assump}[\textbf{Network}]\label{asp_net}
The directed network~$\mc{G} = (\mc{V},\mc{E})$ is strongly connected, i.e., there is a~directed path from each node to every other node. Moreover, there exists a doubly stochastic weight matrix $\Wtrue\in\R^{n\times n}$ associated with~$\mc{G}$, i.e., $\Wtrue$ satisfies the following conditions:
\begin{enumerate}[(a)]
\item $[\Wtrue]_{i,r}>0$ if~$(i,r)\in\mc{E}$ and $[\Wtrue]_{i,r} = 0$ if $(i,r)\notin\mc{E}$.
\item $\Wtrue\mb{1}_n = \Wtrue^\top\mb{1}_n = \mb{1}_n$.
\end{enumerate}
\end{assump}
Assumption~\ref{asp_net} describes the standard consensus weight matrix $\Wtrue$~\cite{tutorial_nedich}. Under this assumption, i.e., $\Wtrue$ is primitive and doubly stochastic, it is well-known that 
\begin{align}\label{lambda}
\ltrue := \left\| \Wtrue - \tfrac{1}{n}\mb{1}_n\mb{1}_n^\top \right\| \in [0,1),   
\end{align}
where $\ltrue$ is the second largest singular value of~$\Wtrue$ and characterizes the connectivity of the network~$\mc{G}$~\cite{matrix_analysis,tutorial_nedich}: if~$\mc{G}$ is fully connected then~$\ltrue = 0$; as the connectivity of~$\mc{G}$ becomes weaker,~$\ltrue$ approaches to~$1$. Therefore, we refer~$(1-\ltrue)\in(0,1]$ as the \emph{spectral gap} of the network~$\mc{G}$. When~$\mc{G}$ is undirected, $\Wtrue$ in Assumption~\ref{asp_net} always exists and can be constructed efficiently via exchange of local degree information between the node~\cite{tutorial_nedich}. Otherwise, $\Wtrue$ may be found by decentralized recursive algorithms provided that it exists~\cite{digraph}.

\subsection{Stochastic Gradient Models}
We make a blanket assumption that each node~$i$ at every iteration~$t$ is able to obtain i.i.d. minibatch samples $\{\bxi_{i,s}^t: s\in[b_t]\}$ for the local random data vector~$\bxi_i$. The induced natural filtration is given by
\begin{align}\label{Ft}
\F_t :=&~\sigma\big(\X_{i,s}^{r}: \forall i\in\mc{V}, s\in[b_r], 1\leq r \leq t-1\big), \qquad\forall t\geq2, \\
\F_1 :=&~\{\Theta,\phi\}. 
\nonumber
\end{align}
Intuitively, the filtration $\F_t$ represents the historical information of an algorithm that samples~$\X_i$ up to iteration~$t$. We require that the stochastic gradient $\n G(\cdot,\bxi_{i,s}^t)$ is conditionally unbiased with respect to~$\F_t$.
\begin{assump}[\textbf{Unbiasedness}]\label{asp_unbias}
$\E\big[\n G_i(\x,\X_{i,s}^{t})|\F_t\big] = \nabla f_i(\x)$, $\forall i\in\mc{V}$, $\forall t\geq1$, $\forall s\in[b_t]$, $\forall \x\in\F_t$.
\end{assump}

\begin{rmk}
Under the empirical risk~\eqref{P_offline}, Assumption~\ref{asp_unbias} amounts to uniform sampling at random from $[m]$.
\end{rmk}

We consider a standard bounded variance assumption~\cite{book_lan} for~$\n G_i(\cdot,\X_{i,s}^t)$.
\begin{assump}[\textbf{Bounded Variance}]\label{asp_bvr}
Let~$\nu_i\in\R^+$, $\forall i\in\mc{V}$.
We have $\E\big[\|\n G_i(\x,\X_{i,s}^{t}) - \n f_i(\x)\|^2|\F_t\big] \leq \nu_i^2$, $\forall t\geq1$, $\forall s\in[b_t]$, $\forall \x\in\F_t$, $\forall i\in\mc{V}$; $\nu^2 := \frac{1}{n}\sum_{i=1}^n\nu^2_i$.
\end{assump}

We are also interested in the case when the stochastic gradients further satisfy the mean-squared smoothness property which is often satisfied in machine learning models~\cite{lowerbound_sgd,book_lan}.

\begin{assump}[\textbf{Mean-Squared Smoothness}]\label{asp_mss}
Let~$L\in\R^{+}$. In the case of population risk~\eqref{P_online},
we have
\begin{align*}
\E\big[\|\n G_i(\x,\X_{i}) - \n G_i(\y,\X_{i})\|^2\big] 
\leq L^2\E\big[\|\x-\y\|^2\big],    
\end{align*}
for all $i\in\mc{V}$ and $\x,\y\in\R^p$.
In the case of empirical risk~\eqref{P_offline}, the above statement reduces to
$$\frac{1}{m}\sum_{s=1}^m\big\|\n G_i(\x,\X_{i,(s)}) - \n G_i(\x,\X_{i,(s)})\big\|^2 \leq L^2\|\x-\y\|^2,$$
for all $i\in\mc{V}$ and $\x,\y\in\R^p$.
\end{assump}

\begin{rmk}
Assumption~\ref{asp_mss} implies that each~$f_i$ is~$L$-smooth by Jensen's inequality.
\end{rmk}

\section{Algorithm Development}\label{sec_alg}
In the centralized scenarios, a popular fixed-point method to solve~\eqref{fix_point} is
the proximal gradient descent~\cite{book_Beck2}, which takes the following form:
\begin{align}\label{CPGD}
\x_{t+1} = \p_{\alpha h}\big(\x_t - \alpha \n F(\x_t)\big),
\qquad \forall t \geq 1,
\end{align}
where~$\a>0$. However, the recursion~\eqref{CPGD} cannot be directly implemented in a decentralized manner. The main challenge lies in the fact that the global gradient $\n F$ is not locally available at any node and cannot be computed via one-shot aggregation of local gradient information in decentralized networks. Moreover, the local gradients~$\{\n f_i\}_{i=1}^n$ are often significantly different due to the heterogeneous data across the nodes, making the classical gradient consensus approaches~\cite{tutorial_nedich} less effective especially in the non-convex settings \cite{GT-SARAH}. One popular technique to overcome these issues is gradient tracking~\cite{NEXT_scutari,GT_CDC}, which has been adopted in various decentralized stochastic gradient methods for smooth non-convex problems, e.g.,~\cite{improved_DSGT_Xin,HSGD_Xin,D_Get}. Inspired by these works, we propose a general proximal stochastic gradient tracking framework, termed as \texttt{ProxGT}, to tackle the non-convex non-smooth composite Problem~\eqref{P}.

\subsection{A Generic Algorithmic Procedure}
We now describe the proposed \texttt{ProxGT} framework. At every iteration~$t$, each node~$i$ in the network retains three local variables $\x_t^i$, $\mb{v}_t^i$, and $\y_t^i$, all in~$\R^p$, where~$\x_t^i$ approximates an stationary point of Problem~\eqref{P}, $\mb{v}_t^i$ estimates the local exact gradient~$\n f_i(\x_t^i)$ from the samples generated for~$\bxi_i$, and~$\y_t^i$ tracks the global gradient~$\n F(\x_t^i)$ via a stochastic gradient tracking type update~\cite{MP_Pu} from the local gradient estimates~$\{\mb{v}_t^i\}_{i=1}^n$. With the global gradient tracker~$\y_t^i$ at hand, each node~$i$ performs a local inexact fixed point update for~\eqref{fix_point}:
\begin{align*}
\z_{t+1}^i
:= \p_{\alpha h}\big(\x_{t}^i - \alpha \y_{t+1}^i\big),    
\end{align*}
where~$\a>0$ is the step-size. The local solution~$\x_{t+1}^i$ at the next iteration is then updated by performing consensus on the intermediate variables~$\{\z_{t+1}^i\}_{i=1}^n$ over the network. For the ease of presentation, we define 
\begin{align*}
\W := \Wtrue \otimes \I_p.
\end{align*}
and the global variables $\x_t$, $\mb{v}_t$, $\y_t$, $\z_t$ which concatenate their corresponding local variables, i.e.,
\begin{align*}
\x_t = 
\begin{bmatrix}
\x_t^1\\
\cdots \\
\x_t^n
\end{bmatrix},
\quad
\mb{v}_t = 
\begin{bmatrix}
\mb{v}_t^1\\
\cdots \\
\mb{v}_t^n
\end{bmatrix},
\quad
\y_t = 
\begin{bmatrix}
\y_t^1\\
\cdots \\
\y_t^n
\end{bmatrix},
\quad
\z_t = 
\begin{bmatrix}
\z_t^1\\
\cdots \\
\z_t^n
\end{bmatrix},
\end{align*}
all in~$\R^{np}$. With the help of these notations, we formally present \texttt{ProxGT} in Algorithm~\ref{PGT} from a global view. 

% The mixing matrix $\Wtrue_{\Phi}$ in Algorithm~\ref{PGT} satisfies the following assumption.

% \begin{assump}[\textbf{Mixing Matrix}]\label{asp_mix}
% In Algorithm~\ref{PGT}, $\Wtrue_{\Phi} := \mc{P}_{\Phi}(\Wtrue)$, where~$\Wtrue$ is defined in Assumption~\ref{asp_net} and $\mc{P}_{\Phi}(\cdot)$ is a polynomial of degree up to~$\Phi$ with the following two options:
% \begin{enumerate}[(a)]
% \item $\mc{P}_{\Phi}(x) = x^\Phi$, for some~$\Phi\geq1$; \label{oa}
% \item $\mc{P}_{\Phi}(\cdot)$ is a Chebyshev polynomial given in Appendix~\ref{app_chebyshev}, provided that~$\mc{G}$ is undirected. \label{ob}
% \end{enumerate}
% \end{assump}

% Under Assumption~\ref{asp_mix}, the mixing matrix $\Wtrue_{\Phi}$ incurs~$\Phi\geq1$ consecutive rounds of decentralized communication over the network in the corresponding update. In particular, Option~\eqref{oa} leads to the standard decentralized averaging step(s), while Option~\eqref{ob} makes use of the Chebyshev acceleration technique~\cite{chebyshev,best_neurips} to reduce the impact of the network spectral gap. Our choice of the Chebyshev polynomial is adapted from~\cite{best_neurips}; see also~\cite{PGD_Xu,Network-DANE,Prox_GT_Di_scutari,AccDVR} for related discussion. 

\begin{rmk}
In Algorithm~\ref{PGT}, the decentralized propagation and averaging of local variables over the network appear as matrix-vector products, while the node-wise implementation of \texttt{ProxGT} can be obtained accordingly. 
\end{rmk}

\begin{rmk}
We note that~$(\Wtrue\otimes\I_p)^K$ leads to~$K$ decentralized averaging step(s) over~$K$ rounds of communication in the corresponding update. This multi-consensus update with an appropriately chosen~$K$ is often helpful to achieve faster convergence in the corresponding algorithms~\cite{nesterov_jakovetic,Network-DANE,Prox_GT_Di_scutari}.
\end{rmk}

\begin{rmk}
The~$\x$- and~$\y$-updates in Algorithm~\ref{PGT} take the adapt-then-combine form~\cite{diffusion_Chen} which often leads improved practical performance.
\end{rmk}

\begin{algorithm}[tbph]
\caption{\textbf{\texttt{ProxGT}} for Problem~\eqref{P}}
\label{PGT}
\begin{algorithmic}[1]
\Require{$\x_{1} = \mb{1}_n\otimes \ol{\x}_1$;~$K$;~$\alpha$;~$\mb{y}_{1} = \mb{0}_{np}$; $\mb{v}_{0} = \mb{0}_{np}$.}
\vspace{0.1cm}
\For{$t = 1, \cdots, T$}
\vspace{0.1cm}
\State{Generate an estimator~$\mb{v}_t^i$ of $\n f_i(\x_t^i)$,~$\forall i$.}
\vspace{0.1cm}
\State{Tracking:
$\mb{y}_{t+1} 
= \W^K\big(\mb{y}_{t} + \mb{v}_t - \mb{v}_{t-1}\big).$}
\vspace{0.1cm}
\State{Prox-Descent:
$\z_{t+1}^i
= \p_{\alpha h}\big(\x_{t}^i - \alpha \y_{t+1}^i\big)$,~$\forall i$.}
\vspace{0.1cm}
\State{Consensus:
$\mb{x}_{t+1} = \W^K\z_{t+1}.$}
\vspace{0.1cm}
\EndFor
\end{algorithmic}
\end{algorithm}

\noindent
Before we proceed, it is helpful to further clarify the role of~$\y_t^i$ and the choice of~$\mb{v}_t^i$ in Algorithm~\ref{PGT}.

\begin{itemize}
\item With the help of the doubly stochasticity of the network weight matrix~$\Wtrue$, it is straightforward to show by induction that the~$\y$-update in Algorithm~\ref{PGT} satisfies an important dynamic tracking property:
\begin{align}\label{y_track}
\frac{1}{n}\sum_{i=1}^n\mb{y}_{t+1}^i 
= \frac{1}{n}\sum_{i=1}^n\mb{v}_t^i, \qquad\forall t\geq1.    
\end{align}
In view of~\eqref{y_track} and the recursion of Algorithm~\ref{PGT}, it is expected that each~$\mb{y}_t^i$ approaches~$\frac{1}{n}\sum_{i=1}^n\mb{v}_t^i$ and thus asymptotically tracks the global gradient~$\n F(\x_t^i)$.

\item Clearly, different choices of the gradient estimator~$\mb{v}_t^i$ lead to different instances of the \texttt{ProxGT} framework. Many local gradient estimation schemes are applicable here, such as the minibatch stochastic approximation~\cite{mirror_prox_Lan,SA} and various variance reduction schemes, e.g.,~\cite{SAGA_reddi,spider,spiderboost,book_lan,HSARAH}. As we explicitly show, a suitable choice can be made in light of the underlying problem class and practical application.
\end{itemize}

\subsection{Instances of Interest}\label{sec_alg_online}
In this section, we present several instances of \texttt{ProxGT} that are of particular interest for the population and empirical risk formulations considered in this paper. 

\subsubsection{Population Risk Minimization}
A natural choice of the gradient estimator~$\mb{v}_t^i$ in \texttt{ProxGT} is the minibatch stochastic approximation~\cite{mirror_prox_Lan}. The resulting instance, called \texttt{ProxGT-SA}, is presented in Algorithm~\ref{PGT_SA}.

\begin{algorithm}[tbph]
\caption{\textbf{\texttt{ProxGT-SA}} for Problem~\eqref{P} with~\eqref{P_online}}
\label{PGT_SA}
\begin{algorithmic}[1]
\Ensure{Replace Line 2 in Algorithm~\ref{PGT} by the following for all $i$.}
\Require{$b$.}
\vspace{0.1cm}
\State{Obtain i.i.d samples $\{\bxi_{i,s}^t: s\in[b]\}$ for~$\bxi_i$.} 
\vspace{0.05cm}
\State{Set $\mb{v}_t^i:= 
\frac{1}{b}\sum_{s=1}^b\n G_i(\x_t^i,\X_{i,s}^t)$, $\forall i$. }
\vspace{0.1cm}
\end{algorithmic}
\end{algorithm}

An alternate approach is to construct the gradient estimator~$\mb{v}_t^i$ in \texttt{ProxGT} via an online SARAH type recursive variance reduction scheme that effectively leverages the historical information to achieve faster convergence. When Assumption~\ref{asp_mss} holds, the resulting algorithm \texttt{ProxGT-SR-O} (given in Algorithm~\ref{PGT_SR_O}) shows superior performance over Algorithm~\ref{PGT_SA}.

\begin{algorithm}[tbph]
\caption{\textbf{\texttt{ProxGT-SR-O}} for Problem~\eqref{P} with~\eqref{P_online}}
\label{PGT_SR_O}
\begin{algorithmic}[1]
\Ensure{Replace Line 2 in Algorithm~\ref{PGT} by the following for all $i$.}
\Require{$B$, $b$, $q$.}
\If{$t \bmod q = 1$}
\vspace{0.1cm}
\State{Obtain i.i.d samples $\{\bxi_{i,s}^t: s\in[B]\}$ for~$\bxi_i$.}
\State{Set $\mb{v}_t^i:= 
\frac{1}{B}\sum_{s=1}^{B}\n G_i(\x_t^i,\X_{i,s}^t)$, $\forall i$. }
\Else
\vspace{0.1cm}
\State{Obtain i.i.d samples $\{\bxi_{i,s}^t: s\in[b]\}$ for~$\bxi_i$.}
\State{Set $\mb{v}_t^i:= 
\frac{1}{b}\sum_{s=1}^{b}\big(\n G_i(\x_t^i,\X_{i,s}^t) - \n G_i(\x_{t-1}^i,\X_{i,s}^t)\big) + \mb{v}_{t-1}^i$, $\forall i$. }
\vspace{0.1cm}
\EndIf
\end{algorithmic}
\end{algorithm}

\vspace{-0.6cm}
\subsubsection{Empirical Risk Minimization}
We now consider Problem~\eqref{P} under the empirical risk~\eqref{P_offline}, where the support of each local random data~$\bxi_i$ is a finite set~$\Xi_i =\{\bxi_{i,(1)},\cdots,\bxi_{i,(m)}\}$. We recall that~\eqref{P_offline}, in its essence, is a special case of the population risk~\eqref{P_online} and therefore \texttt{ProxGT-SA} and \texttt{ProxGT-SR-O} developed in Section~\ref{sec_alg_online} remain applicable. However, the finite-sum structure of each~$f_i$ under \eqref{P_offline} lends itself to faster stochastic variance reduction procedures~\cite{spider,spiderboost,sarah_ncvx}. In particular, we replace the periodic minibatch stochastic approximation step in \texttt{ProxGT-SR-O} by exact gradient computation. This corresponding implementation, named \texttt{ProxGT-SR-E}, is presented in Algorithm~\ref{PGT_SR_E}.

\begin{algorithm}[tbph]
\caption{\textbf{\texttt{ProxGT-SR-E}} for Problem~\eqref{P} with~\eqref{P_offline}}
\label{PGT_SR_E}
\begin{algorithmic}[1]
\Ensure{Replace Line 2 in Algorithm~\ref{PGT} by the following for all $i$.}
\Require{$b$, $q$.}
\If{$t \bmod q = 1$}
\vspace{0.1cm}
\State{Set $\mb{v}_t^i:= \n f_i(\x_t^i)$, $\forall i$. }
\Else
\vspace{0.1cm}
\State{Sample $\{\bxi_{i,s}^t: s\in[b]\}$ uniformly at random from~$\Xi_i$, $\forall i$.}
\State{Set $\mb{v}_t^i:= 
\frac{1}{b}\sum_{s=1}^{b}\big(\n G_i(\x_t^i,\X_{i,s}^t) - \n G_i(\x_{t-1}^i,\X_{i,s}^t)\big) + \mb{v}_{t-1}^i$, $\forall i$. }
\vspace{0.1cm}
\EndIf
\end{algorithmic}
\end{algorithm}

\vspace{-0.6cm}
\section{Main Results}\label{sec_main}
In this section, we present the main convergence results of the proposed algorithms and discuss their intrinsic features. Throughout the rest of this paper, we let Assumption~\ref{asp_f},~\ref{asp_net}, and~\ref{asp_unbias} hold without explicit statements. The iteration complexity of \texttt{ProxGT} and its instances is quantified in the following sense, while the sample and communication complexities can be obtained accordingly. 
\begin{defi}[\textbf{Iteration Complexity}]\label{def_metric}
Consider the random vectors~$\{\x_t^i\}$ generated by \texttt{ProxGT}. We say that \texttt{ProxGT} finds an~$\epsilon$-stationary point of Problem~\eqref{P} in~$T$ iterations if
\begin{align}\label{metric}
\frac{1}{T}\sum_{t=1}^{T}\frac{1}{n}\sum_{i=1}^n\E\Big[\big\|\gp\big(\x_t^i\big)\big\|^2 + L^2\big\|\x_t^i - \ol{\x}_t\big\|^2 \Big] \leq \epsilon^2,   
\end{align}
where~$\ol{\x}_t := \frac{1}{n}\sum_{i=1}^n\x_t^i$ and the gradient mapping~$\gp(\cdot)$ is defined in~\eqref{GP}.
\end{defi}
\noindent
In view of Definition~\ref{def_eps_stat}, this metric concerns the stationary gap and the consensus error over the network. In particular, if~\eqref{metric} holds and we select the output, say~$\wh{\x}$, of \texttt{ProxGT} uniformly at random from $\{\x_t^i: t\in[T], i\in\mc{V}\}$, then~$\E[\|\gp(\wh{\x})\|^2]\leq\epsilon^2$, i.e.,~$\wh{\x}$ is an $\epsilon$-stationary solution for Problem~\eqref{P}. 

\subsection{Sample and Communication Complexity Results}
For ease of presentation, we define
\begin{align}\label{delta_zeta}
\Delta := \Psi(\ol{\x}_1) - \ul{\Psi}
\qquad
\text{and}
\qquad
\zeta^2 := \frac{1}{n}\sum_{i=1}^n\big\|\n f_i(\ol{\x}_1)\big\|^2. 
\end{align}
where recall that~$\Psi$ is the global composite objective function.
We note that~$\mc{O}(\cdot)$ in this section only hides universal constants that are not related to the problem parameters. 

\begin{theorem}[Convergence of \texttt{ProxGT-SA}]\label{thm_SA}
Consider Problem~\eqref{P} with the population risk~\eqref{P_online}.
Let Assumption~\ref{asp_bvr} hold and $\epsilon$ be the error tolerance.
Set $K \asymp \frac{\log(n\zeta)}{1-\ltrue}$, $\a \asymp \frac{1}{L}$, $b \asymp \frac{\nu^2}{n\epsilon^2}$ in Algorithm~\ref{PGT_SA}. Then Algorithm~\ref{PGT_SA} finds an~$\epsilon$-stationary solution in $\mc{O}\big(\frac{L\Delta}{\epsilon^2}\big)$ iterations, leading to $$\mc{O}\left(\frac{L\Delta\nu^2}{n\epsilon^4}\right)$$ stochastic gradient samples at each node and $$\mc{O}\left(\frac{L\Delta}{\epsilon^2}\cdot\frac{\log(n\zeta)}{1-\ltrue}\right)$$ rounds of communication over the network.
\end{theorem}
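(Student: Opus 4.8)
The plan is to follow the standard template for decentralized gradient tracking analysis, but adapted to the non-convex composite setting where the proximal mapping and the gradient mapping $\s(\cdot)$ replace the ordinary gradient. The overall strategy is to combine two coupled recursions—a \emph{stochastic descent inequality} for the objective value and a \emph{consensus error bound}—and then telescope. First I would introduce the key averaged quantities $\ol{\x}_t$, $\ol{\y}_t := \frac{1}{n}\sum_i \y_t^i$, $\ol{\v}_t := \frac{1}{n}\sum_i \v_t^i$, and exploit the tracking identity~\eqref{y_track}, which gives $\ol{\y}_{t+1} = \ol{\v}_t$. Under minibatch stochastic approximation, Assumption~\ref{asp_bvr} yields $\E[\|\ol{\v}_t - \frac{1}{n}\sum_i \n f_i(\x_t^i)\|^2 \mid \F_t] \leq \frac{\nu^2}{nb}$, so that choosing $b \asymp \nu^2/(n\epsilon^2)$ drives the averaged gradient-estimation variance below the target $\epsilon^2$ scale; this is exactly where the $1/n$ linear-speedup factor enters, since the $n$ independent minibatches across nodes effectively produce a batch of size $nb$.

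Next I would establish the descent inequality. The idea is to track the evolution of $\Psi(\ol{\x}_t)$ using the smoothness of $F$ (Assumption~\ref{asp_f}(b)) together with the three-point / firm-nonexpansiveness properties of the proximal mapping~\eqref{def_prox}. Because $\ol{\x}_{t+1}$ is the network-average of the consensus-smoothed prox outputs $\z_{t+1}^i$, I would relate $\ol{\x}_{t+1}$ to $\ol{\z}_{t+1} = \frac{1}{n}\sum_i \p_{\alpha h}(\x_t^i - \alpha \y_{t+1}^i)$ (the doubly-stochastic $\W^K$ preserves the average) and compare this against the ideal centralized prox step driven by the true averaged gradient. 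With $\alpha \asymp 1/L$, the per-iteration decrease should take the form $\E[\Psi(\ol{\x}_{t+1})] \leq \E[\Psi(\ol{\x}_t)] - c\,\alpha\, \E[\|\s(\ol{\x}_t)\|^2] + (\text{consensus error terms}) + (\text{variance terms})$, where the consensus-error terms are controlled by $\frac{1}{n}\sum_i \|\x_t^i - \ol{\x}_t\|^2$ and the variance terms by $\nu^2/(nb)$. The main obstacle—and the place where the ``novel stochastic descent inequality'' advertised in the contributions lives—is handling the interaction between the proximal operator and the consensus perturbation: unlike in the smooth case, $\s(\cdot)$ is defined through a fixed-point of the prox, so I cannot simply use $\langle \n F, \cdot\rangle$ descent. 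I expect to need the nonexpansiveness of $\p_{\alpha h}$ to bound $\|\s(\ol{\x}_t) - \frac{1}{\alpha}(\ol{\x}_t - \ol{\z}_{t+1})\|$ in terms of the gradient-tracking error $\|\ol{\y}_{t+1} - \n F(\ol{\x}_t)\|$ plus consensus errors, and to translate the node-wise metric~\eqref{metric} back to the averaged iterate.

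Then I would close the loop with the consensus error bound. Using the contraction of $\W^K$—namely $\|\W^K \u - \mb{1}_n\otimes\ol{\u}\| \leq \ltrue^K \|\u - \mb{1}_n\otimes\ol{\u}\|$—applied to both the $\x$-consensus (Line 5) and $\y$-tracking (Line 3) updates, I would derive coupled recursions for the consensus errors $\|\x_t - \mb{1}_n\otimes\ol{\x}_t\|^2$ and $\|\y_t - \mb{1}_n\otimes\ol{\y}_t\|^2$. Setting $K \asymp \frac{\log(n\zeta)}{1-\ltrue}$ makes $\ltrue^K$ small enough (polynomially in $1/(n\zeta)$) that these error recursions become strict contractions with a summable forcing term, so the accumulated consensus errors are dominated by the descent gain; this is precisely the mechanism that renders the sample complexity \emph{topology-independent}, pushing all the $\ltrue$-dependence into the communication count $K$ per iteration. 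Finally I would telescope the descent inequality over $t = 1,\dots,T$, divide by $T$, substitute the bounds on consensus and variance terms, and conclude that $T \asymp L\Delta/\epsilon^2$ suffices to achieve~\eqref{metric}. Multiplying $T$ by the per-iteration sample count $b \asymp \nu^2/(n\epsilon^2)$ and communication count $K$ yields the stated $\mc{O}(L\Delta\nu^2/(n\epsilon^4))$ samples per node and $\mc{O}\big(\frac{L\Delta}{\epsilon^2}\cdot\frac{\log(n\zeta)}{1-\ltrue}\big)$ communication rounds.
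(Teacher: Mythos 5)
Your overall route---a function-value descent inequality at the network average coupled with consensus/tracking error recursions contracted by $\W^K$, the $\nu^2/(nb)$ minibatch variance bound, then telescoping with $K \asymp \log(n\zeta)/(1-\ltrue)$, $\alpha\asymp 1/L$, $b\asymp \nu^2/(n\epsilon^2)$, $T\asymp L\Delta/\epsilon^2$---is exactly the paper's architecture (Lemmas~\ref{lem_descent}, \ref{lem_consensus}, \ref{lem_VR}\eqref{lem_VR_M}, \ref{lem_GT}\eqref{lem_GT_M}, and Proposition~\ref{PGT_main0}). However, there is a genuine gap at the crux of the composite part. You propose to control the $h$-contribution to $\Psi(\ol{\x}_{t+1})$ by relating $\ol{\x}_{t+1} = \ol{\z}_{t+1} = \frac{1}{n}\sum_{i=1}^n\p_{\alpha h}(\x_t^i - \alpha\y_{t+1}^i)$ to the ideal centralized prox step and invoking nonexpansiveness. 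This fails as stated: $\ol{\z}_{t+1}$ is an average of proximal points evaluated at $n$ \emph{different} arguments and is not itself a proximal point of anything, and $h$ is extended-valued (e.g., an indicator function), hence not Lipschitz---so a norm bound $\|\ol{\z}_{t+1} - \p_{\alpha h}(\ol{\x}_t - \alpha\n F(\ol{\x}_t))\|\leq\delta$ gives no control whatsoever on the function value $h(\ol{\z}_{t+1})$. Nonexpansiveness (Lemma~\ref{lem_prox_nonexp}) is the right tool where the paper actually uses it---comparing the stochastic gradient mapping $\sgp_t^i$ with $\gp(\x_t^i)$ in~\eqref{map_diff}---but it cannot produce function-value descent for $h$.

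What closes this gap in the paper (Appendix~\ref{app_proof_descent_lem}, Step 1) is a node-wise subgradient argument followed by Jensen's inequality: from the optimality condition of each local prox step one extracts $h'(\z_{t+1}^i) = -\y_{t+1}^i - \frac{1}{\alpha}(\z_{t+1}^i - \x_t^i)\in\partial h(\z_{t+1}^i)$ as in~\eqref{opt_PG}, applies the subgradient inequality with test point $\u=\ol{\x}_t$ together with the three-point identity~\eqref{three_points} to bound each $h(\z_{t+1}^i)$, and only then averages over $i$, invoking \emph{convexity} of $h$ to get $h(\ol{\x}_{t+1}) = h(\ol{\z}_{t+1})\leq\frac{1}{n}\sum_{i=1}^n h(\z_{t+1}^i)$ as in~\eqref{descent5_h}--\eqref{descent_h}; this Jensen step is also what guarantees $\ol{\z}_{t+1}\in\dom(h)$ in the first place. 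Once you replace your ``compare to the ideal prox step'' mechanism by this node-wise-then-average argument, the rest of your outline goes through. Two smaller remarks: (i) the paper's descent lemma and the target metric~\eqref{metric} are phrased directly in terms of the local mappings $\gp(\x_t^i)$ rather than $\gp(\ol{\x}_t)$, so the ``translation'' step you anticipate is built in rather than appended; and (ii) the consensus recursions are contractions for \emph{every} $K\geq1$ (any $\lambda=\ltrue^K<1$); what the large $K$ actually buys is that the forcing terms $\lambda^2 n\zeta^2$ and $\lambda^2 n\nu^2/b$ become $\mc{O}(1)$ and the step-size restriction $\alpha\lesssim(1-\lambda^2)^2/(\lambda^2 L)$ is met by $\alpha\asymp1/L$---that, not the contraction itself, is the mechanism of topology independence.
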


In view of Theorem~\ref{thm_SA}, \texttt{ProxGT-SA} achieves an \emph{optimal} and \emph{topology-independent} sample complexity at each node that exhibits linear speedup against the centralized minibatch stochastic proximal gradient method \cite{optimal_dsgt,mirror_prox_Lan,lowerbound_sgd} execuated on a single node. To the best of our knowledge, this is the first time such sample complexity result is established, in the literature of decentralized stochastic gradient methods for the general non-convex composite population risk minimization problem.

\begin{theorem}[Convergence of \texttt{ProxGT-SR-O}]\label{thm_SR_O}
Consider Problem~\eqref{P} with the population risk~\eqref{P_online}.
Let Assumption~\ref{asp_bvr} and~\ref{asp_mss} hold and let $\epsilon$ be the error tolerance.
Set $K \asymp \frac{\log(n\zeta)}{1-\ltrue}$, $\a \asymp \frac{1}{L}$, $q \asymp \frac{\nu}{\epsilon}$, $b \asymp \frac{\nu}{n\epsilon}$, $B \asymp \frac{\nu^2}{n\epsilon^2}$ in Algorithm~\ref{PGT_SR_O}. Then Algorithm~\ref{PGT_SR_O} finds an~$\epsilon$-stationary solution in $\mc{O}\big(\frac{L\Delta}{\epsilon^2}+\frac{\nu}{\epsilon}\big)$ iterations, leading to$$\mc{O}\left(\frac{L\Delta\nu}{n\epsilon^3} + \frac{\nu^2}{n\epsilon^2}\right)$$ stochastic gradient samples at each node and $$\mc{O}\left(\left(\frac{L\Delta}{\epsilon^2}+\frac{\nu}{\epsilon}\right)\frac{\log(n\zeta)}{1-\ltrue}\right)$$ rounds of communication over the network.
\end{theorem}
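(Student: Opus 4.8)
\emph{Overall strategy.} The plan is to run an averaged-iterate analysis and to collect every error source into a single Lyapunov function that decreases on average. Writing $\ol{\x}_t := \frac1n\sum_{i=1}^n\x_t^i$ and $\ol{\mb{v}}_t := \frac1n\sum_{i=1}^n\mb{v}_t^i$, the double stochasticity of $\Wtrue$ and the consensus step $\x_{t+1}=\W^K\z_{t+1}$ give $\ol{\x}_{t+1}=\frac1n\sum_i\p_{\alpha h}(\x_t^i-\alpha\y_{t+1}^i)$, while the tracking identity~\eqref{y_track} gives $\ol{\y}_{t+1}=\ol{\mb{v}}_t$. I would track four coupled quantities in expectation: the objective gap $\E[\Psi(\ol{\x}_t)]-\ul{\Psi}$, the consensus error $\Omega_t^x := \E[\|\x_t-\mb{1}_n\otimes\ol{\x}_t\|^2]$, the gradient-tracking consensus error $\Omega_t^y := \E[\|\y_t-\mb{1}_n\otimes\ol{\y}_t\|^2]$, and the mean gradient-estimation error $\mc{E}_t := \E[\|\ol{\mb{v}}_t-\n F(\ol{\x}_t)\|^2]$. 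The target metric~\eqref{metric} follows from $\E[\|\gp(\ol{\x}_t)\|^2]$ and $\Omega_t^x$: since $\p_{\alpha h}$ is nonexpansive and $\n F$ is $L$-Lipschitz with $\alpha\asymp1/L$, the gradient mapping~\eqref{GP} is $\mc{O}(L)$-Lipschitz, so $\E\|\gp(\x_t^i)\|^2\leq 2\,\E\|\gp(\ol{\x}_t)\|^2+\mc{O}(L^2)\,\E\|\x_t^i-\ol{\x}_t\|^2$.

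\emph{Stochastic descent inequality.} First I would derive the descent inequality for the composite objective. Because $\ol{\x}_{t+1}$ is an \emph{average} of node-wise prox outputs rather than a single proximal step, the textbook composite descent lemma does not apply directly; instead I would combine $L$-smoothness of $F$ on $(\ol{\x}_t,\ol{\x}_{t+1})$ with Jensen's inequality $h(\ol{\x}_{t+1})\leq\frac1n\sum_i h(\z_{t+1}^i)$ and the node-wise three-point (strong-convexity) inequality for the prox objective in~\eqref{def_prox}. Comparing against the virtual centralized step $\p_{\alpha h}(\ol{\x}_t-\alpha\n F(\ol{\x}_t))$ that defines $\gp(\ol{\x}_t)$, and absorbing the discrepancies $\x_t^i-\ol{\x}_t$ and $\y_{t+1}^i-\n F(\ol{\x}_t)$ by nonexpansiveness, I expect a bound of the form
\[
\E[\Psi(\ol{\x}_{t+1})]\leq \E[\Psi(\ol{\x}_t)]-c_1\alpha\,\E[\|\gp(\ol{\x}_t)\|^2]+c_2\alpha\,\mc{E}_t+\tfrac{c_3 L^2}{n}\Omega_t^x+\tfrac{c_4}{n}\Omega_{t+1}^y
\]
for absolute constants $c_1,\dots,c_4>0$ when $\alpha\asymp 1/L$.

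\emph{Consensus and tracking recursions.} Next I would control the two consensus errors. For the $\x$-error, the nonexpansiveness of $\p_{\alpha h}$ together with the fact that $\ol{\z}_{t+1}$ minimizes $\sum_i\|\cdot-\z_{t+1}^i\|^2$ yields the key proximal bound $\E\sum_i\|\z_{t+1}^i-\ol{\z}_{t+1}\|^2\leq 2\,\Omega_t^x+2\alpha^2\,\Omega_{t+1}^y$ (comparing each $\z_{t+1}^i$ to $\p_{\alpha h}(\ol{\x}_t-\alpha\ol{\y}_{t+1})$); the $K$-step mixing then contracts by $\ltrue^{2K}$, giving $\Omega_{t+1}^x\leq 2\ltrue^{2K}(\Omega_t^x+\alpha^2\Omega_{t+1}^y)$. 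This is precisely the promised new consensus bound that handles the proximal mapping. For the $\y$-error, the update $\y_{t+1}=\W^K(\y_t+\mb{v}_t-\mb{v}_{t-1})$ gives, for any $\eta>0$,
\[
\Omega_{t+1}^y\leq \ltrue^{2K}(1+\eta)\,\Omega_t^y+\ltrue^{2K}(1+\eta^{-1})\,\E\big[\|\mb{v}_t-\mb{v}_{t-1}-\mb{1}_n\otimes(\ol{\mb{v}}_t-\ol{\mb{v}}_{t-1})\|^2\big],
\]
where the innovation $\mb{v}_t-\mb{v}_{t-1}$ is in turn controlled by iterate movements through the SARAH structure.

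\emph{Variance reduction and closing the loop.} The heart of the argument --- and the step I expect to be the main obstacle --- is controlling $\mc{E}_t$ jointly with the consensus errors. Using Assumption~\ref{asp_mss}, the recursive estimator obeys a per-inner-loop recursion of shape $\mc{E}_t\leq\mc{E}_{t-1}+\tfrac{L^2}{nb}\,\E\|\ol{\x}_t-\ol{\x}_{t-1}\|^2+(\text{consensus-movement terms})$, reset to $\mc{O}(\nu^2/(nB)+L^2\Omega_t^x/n)$ at each restart $t\equiv1\,(\modd\ q)$ by Assumption~\ref{asp_bvr}. Summing over an inner loop of length $q$ feeds $\mc{E}_t$ back into $\sum\E\|\gp(\ol{\x}_t)\|^2$ and $\Omega_t^x$ through the movements $\|\ol{\x}_t-\ol{\x}_{t-1}\|^2\asymp\alpha^2\|\gp\|^2$, so the four recursions form a genuinely coupled linear system; the delicate point is that the SARAH accumulation factor $\tfrac{L^2\alpha^2 q}{nb}$ must be kept a small constant, which the choices $q\asymp\nu/\epsilon$, $b\asymp\nu/(n\epsilon)$ achieve since then $q/(nb)\asymp1$. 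I would close the system with a Lyapunov function $\mc{L}_t:=\E[\Psi(\ol{\x}_t)]-\ul{\Psi}+\gamma_1\Omega_t^x+\gamma_2\Omega_t^y+(\text{accumulated }\mc{E})$ and choose $K\asymp\frac{\log(n\zeta)}{1-\ltrue}$ so that $\ltrue^{2K}$ is small enough for the consensus contractions to dominate all cross terms; this is where $\zeta$ from~\eqref{delta_zeta} enters, washing out the initial heterogeneity at $t=1$ (where $\y_1=\mb{0}$, $\mb{v}_0=\mb{0}$, so $\Omega_2^y\lesssim\ltrue^{2K}n\zeta^2$). Telescoping $\mc{L}_t$ over $t=1,\dots,T$ and using $\Psi\geq\ul{\Psi}$ yields $\frac1T\sum_t\E\|\gp(\ol{\x}_t)\|^2\lesssim\frac{L\Delta}{T}+\frac{\nu^2}{nB}+\cdots$, and substituting $q\asymp\nu/\epsilon$, $b\asymp\nu/(n\epsilon)$, $B\asymp\nu^2/(n\epsilon^2)$ drives every residual below $\mc{O}(\epsilon^2)$ once $T\asymp\frac{L\Delta}{\epsilon^2}+\frac{\nu}{\epsilon}$. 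The stated sample and communication counts then follow by multiplying $T$ by the per-iteration sample cost $B/q+b\asymp\nu/(n\epsilon)$ and by $K$, respectively.
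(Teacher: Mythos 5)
Your proposal follows the same architecture as the paper's proof: a composite descent inequality obtained from $L$-smoothness of $F$ plus Jensen's inequality $h(\ol{\x}_{t+1})\leq\frac{1}{n}\sum_i h(\z_{t+1}^i)$ across the prox outputs (Lemma~\ref{lem_descent}), a prox-nonexpansiveness consensus bound (Lemma~\ref{lem_consensus}), a gradient-tracking recursion (Lemma~\ref{lem_GT}), and a SARAH variance recursion (Lemma~\ref{lem_VR}), all closed as a coupled linear system under exactly the paper's parameter regime ($q\asymp nb$ so the SARAH factor $L^2\alpha^2q/(nb)$ is a small constant, $K\asymp\frac{\log(n\zeta)}{1-\ltrue}$ so that $(1-\lambda)^{-1}$, $\lambda\zeta$, $\lambda n$ are all $\mc{O}(1)$, then $T\asymp L\Delta/\epsilon^2+q$, $B\asymp\nu^2/(n\epsilon^2)$, $b=B/q$), with the same final accounting $\mc{O}(T(b+B/q))$ samples and $TK$ communications. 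The cosmetic differences are fine: centering the descent on $\gp(\ol{\x}_t)$ with an $\mc{O}(L)$-Lipschitz transfer to the local mappings $\gp(\x_t^i)$, versus the paper's direct use of $\gp(\x_t^i)$ and the negative stochastic-mapping term $-\frac{1}{n}\sum_i\|\sgp_t^i\|^2$; and measuring the estimator error against $\n F(\ol{\x}_t)$ rather than $\ol{\nf}(\x_t)$, which differ only by a consensus term via $L$-smoothness.

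There is, however, one genuine gap: your four-quantity system cannot close the tracking recursion at restart iterations. For $t\bmod q\neq1$ the innovation is indeed controlled per node by mean-squared smoothness, $\E\big[\|\mb{v}_t-\mb{v}_{t-1}\|^2\big]\leq L^2\E\big[\|\x_t-\x_{t-1}\|^2\big]$, as you say. But at a restart $t=zq+1$, $\mb{v}_t$ is a fresh minibatch independent of $\mb{v}_{t-1}$, so the innovation must be bounded by
\begin{align*}
\|\mb{v}_t-\mb{v}_{t-1}\|^2\leq 3\Upsilon_t+3L^2\|\x_t-\x_{t-1}\|^2+3\Upsilon_{t-1},
\qquad
\Upsilon_t:=\|\mb{v}_t-\nf(\x_t)\|^2,
\end{align*}
i.e., by the \emph{summed-over-nodes} estimator error $\Upsilon_t$, not the network-averaged error $\mc{E}_t$ you track. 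Jensen gives the averaged error $\lesssim\Upsilon_t/n$, but the reverse inequality $\Upsilon_t\lesssim n\,\mc{E}_t$ that your system would need is false in general: the per-node errors are not recoverable from their average. This is precisely why the paper runs two parallel SARAH recursions: one for the averaged error $\delta_t=\|\ol{\mb{v}}_t-\ol{\nf}(\x_t)\|^2$ (Lemma~\ref{lem_VR_SARAH}, feeding the descent inequality, where averaging i.i.d.\ node errors produces the $1/n$ gain behind the linear speedup), and a separate one for $\Upsilon_t$ (Step 4 of the proof of Lemma~\ref{lem_GT}\eqref{lem_GT_O}, feeding the tracking bound, with no $1/n$ gain). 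The fix is mechanical—add $\Upsilon_t$ as a fifth quantity and rerun the same SARAH argument without averaging—and your final rates survive, because every $\Upsilon$-term in the tracking bound carries a $\lambda^2$ prefactor and your choice of $K$ makes $\lambda^2 n=\mc{O}(1)$, exactly as in the paper's term $\frac{14\lambda^2Tn\nu^2}{(1-\lambda^2)^2Bq}$. (A smaller point: your consensus step $\Omega^x_{t+1}\leq 2\lambda^2\big(\Omega^x_t+\alpha^2\Omega^y_{t+1}\big)$ contracts only when $2\lambda^2<1$, which your $K$ guarantees a posteriori; the paper's Young-inequality version in Lemma~\ref{lem_consensus} contracts for every $\lambda<1$, which keeps the lemma parameter-free.)
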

\noindent
Two remarks are in order.
\begin{itemize}
\item Theorem~\ref{thm_SR_O} shows that \texttt{ProxGT-SR-O} attains an \emph{optimal} and \emph{topology-independent} sample complexity at each node that exhibits linear speedup compared to the centralized optimal proximal online variance reduction methods \cite{sarah_ncvx,spiderboost,book_lan,HSARAH,lowerbound_sgd} implemented on a single node. To the best of our knowledge, this appears to be the first such sample complexity result in the literature of the decentralized non-convex composite population risk minimization problem with mean-squared smoothness.

\item For the special case $h = 0$, \texttt{ProxGT-SR-O} also improves the state-of-the-art sample complexity result given by \texttt{GT-HSGD}~\cite{HSGD_Xin} in the following sense. \texttt{GT-HSGD} achieves the optimal sample complexity in the regime where the error tolerance $\epsilon$ of the problem is small enough, i.e., $\epsilon\lesssim(1-\ltrue)^{3}n^{-1}$. \texttt{ProxGT-SR-O} removes this regime restriction by performing $K \asymp \frac{\log(n\zeta)}{1-\ltrue}$ rounds of consensus update per iteration.
\end{itemize}

\begin{theorem}[Convergence of \texttt{ProxGT-SR-E}]\label{thm_SR_E}
Consider Problem~\eqref{P} with the empirical risk~\eqref{P_offline}.
Let Assumption~\ref{asp_bvr} and~\ref{asp_mss} hold and let $\epsilon$ be the error tolerance.
Set $K \asymp \frac{\log\zeta}{1-\ltrue}$, $\a \asymp \frac{1}{L}$, $q \asymp \sqrt{nm}$, $b \asymp \max\left\{\sqrt{\frac{m}{n}},1\right\}$ in Algorithm~\ref{PGT_SR_E}. Then Algorithm~\ref{PGT_SR_E} finds an~$\epsilon$-stationary solution in $\mc{O}\big(\frac{L\Delta}{\epsilon^2}+\sqrt{nm}\big)$ iterations, leading to
$$\mc{O}\left(\frac{L\Delta}{\epsilon^2}\max\left\{\sqrt{\frac{m}{n}},1\right\}+\max\left\{m,\sqrt{nm}\right\}\right)$$ 
stochastic gradient samples at each node and $$\mc{O}\left(\left(\frac{L\Delta}{\epsilon^2}+\sqrt{nm}\right)\frac{\log\zeta}{1-\ltrue}\right)$$ rounds of communication over the network.
\end{theorem}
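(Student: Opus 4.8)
The plan is to adapt the unified Lyapunov analysis underlying \texttt{ProxGT-SR-O} (Theorem~\ref{thm_SR_O}), exploiting the fact that \texttt{ProxGT-SR-E} differs only in that its periodic checkpoint computes the \emph{exact} local gradient $\nabla f_i(\x_t^i)$ rather than a large minibatch estimate. I would track four coupled quantities along the iterations: the function-value gap $\Psi(\ol{\x}_t) - \ul{\Psi}$, the consensus error $\|\x_t - \mb{1}_n\otimes\ol{\x}_t\|^2$, the gradient-tracking error $\|\y_t - \mb{1}_n\otimes\ol{\y}_t\|^2$ where $\ol{\y}_t = \frac{1}{n}\sum_i\mb{v}_{t-1}^i$ by the tracking identity~\eqref{y_track}, and the \texttt{SARAH} estimation error $\frac{1}{n}\sum_i\E[\|\mb{v}_t^i - \nabla f_i(\x_t^i)\|^2]$.

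First I would establish the \emph{stochastic descent inequality} for the composite objective. Because the proximal map is nonlinear, the averaged iterate $\ol{\z}_{t+1}=\ol{\x}_{t+1}$ is \emph{not} the centralized prox update $\p_{\alpha h}(\ol{\x}_t - \alpha\ol{\y}_{t+1})$; I would quantify this discrepancy via the firm nonexpansiveness of $\p_{\alpha h}$ and control it by the consensus and tracking errors. Combined with $L$-smoothness of $F$ and the three-point prox inequality, this yields a one-step decrease of $\Psi(\ol{\x}_t)$ of order $-c\alpha\|\gp(\ol{\x}_t)\|^2$ plus error terms proportional to the consensus error, the tracking error, and the estimation error. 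Next, using that $\W^K$ contracts deviations from the network average at rate $\ltrue^K$ and that $\p_{\alpha h}$ is $1$-Lipschitz, I would derive the consensus recursion $\|\x_{t+1} - \mb{1}_n\otimes\ol{\x}_{t+1}\|^2 \lesssim \ltrue^{2K}\big(\|\x_t-\mb{1}_n\otimes\ol{\x}_t\|^2 + \alpha^2\|\y_{t+1}-\mb{1}_n\otimes\ol{\y}_{t+1}\|^2\big)$ and an analogous tracking recursion, so that choosing $K \asymp \frac{\log\zeta}{1-\ltrue}$ drives $\ltrue^{2K}$ small enough to render the coupled consensus/tracking system contractive.

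The variance-reduction step is where the empirical-risk structure enters: using mean-squared smoothness (Assumption~\ref{asp_mss}) and conditional unbiasedness, the \texttt{SARAH} recursion gives $\E[\|\mb{v}_t^i - \nabla f_i(\x_t^i)\|^2\,|\,\F_t] \leq \|\mb{v}_{t-1}^i - \nabla f_i(\x_{t-1}^i)\|^2 + \frac{L^2}{b}\|\x_t^i-\x_{t-1}^i\|^2$ inside each inner loop; crucially, at every checkpoint $t\bmod q=1$ this error is \emph{exactly zero}, so summing over the at most $q$ inner steps bounds the accumulated estimation error purely by $\frac{L^2}{b}$ times the accumulated movement $\sum_s\|\x_s-\x_{s-1}\|^2$, with \emph{no} residual $\nu^2$ term. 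I would then assemble a single Lyapunov function as a weighted sum of the four quantities, with weights chosen so that the movement terms $\|\x_t^i-\x_{t-1}^i\|^2$ generated by the descent and by the variance recursion are absorbed; telescoping over $t=1,\dots,T$ and dividing by $T$ yields the averaged stationarity bound~\eqref{metric}. The choice $q\asymp\sqrt{nm}$, $b\asymp\max\{\sqrt{m/n},1\}$, $\alpha\asymp 1/L$ balances the checkpoint cost against the inner-loop variance and gives $T = \mc{O}(\frac{L\Delta}{\epsilon^2}+\sqrt{nm})$; a direct sample count ($m$ at each of the $\approx T/q$ checkpoints plus $2b$ at each remaining step) and communication count ($2K$ rounds per iteration) then deliver the claimed complexities.

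The main obstacle I anticipate is the descent inequality: controlling the gap between the decentralized averaged prox update $\ol{\z}_{t+1}$ and the ideal centralized prox update, since the nonlinear $\p_{\alpha h}$ does not commute with network averaging. The subtlety is to bound this gap \emph{only} by the consensus and tracking errors—which the $\W^K$ contraction can subsequently suppress—without introducing terms that would spoil the topology-independence of the sample complexity; this is exactly where the firm nonexpansiveness of the proximal map and a careful treatment of the subdifferential of $h$ must be invoked.
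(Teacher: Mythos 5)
Your overall architecture is the paper's: a descent inequality for $\Psi$ at the network average, a consensus recursion via prox nonexpansiveness, a tracking recursion, a SARAH error recursion whose checkpoint term is exactly zero for the empirical risk, and then the same parameter choices and sample/communication accounting ($T(b+m/q)$ samples, $\mc{O}(K)$ rounds per iteration). However, your central mechanism for the descent inequality has a genuine gap. You propose to compare $\ol{\z}_{t+1}$ with the ideal centralized prox point $\wt{\z}_{t+1}:=\p_{\alpha h}(\ol{\x}_t-\alpha\ol{\y}_{t+1})$, bound the discrepancy by (firm) nonexpansiveness, and transfer the descent across it. Nonexpansiveness does give $\|\ol{\z}_{t+1}-\wt{\z}_{t+1}\|\lesssim$ consensus $+$ tracking error, but to telescope you then need $\Psi(\ol{\x}_{t+1})=\Psi(\ol{\z}_{t+1})\leq\Psi(\wt{\z}_{t+1})+\mc{O}(\|\ol{\z}_{t+1}-\wt{\z}_{t+1}\|)$, and for the $h$-part this fails: a proper closed convex $h$ need not be Lipschitz on $\dom(h)$ (its subgradients can blow up near the boundary of $\dom(h)$), so function values do not transfer under small perturbations of the argument, and firm nonexpansiveness cannot repair this. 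The paper never makes this comparison. Its Lemma~\ref{lem_descent} writes the prox optimality condition at each \emph{local} point $\z_{t+1}^i$, which supplies the explicit, norm-controlled subgradient $h'(\z_{t+1}^i)=-\y_{t+1}^i-\frac{1}{\alpha}(\z_{t+1}^i-\x_t^i)$, applies the subgradient (three-point) inequality with reference point $\u=\ol{\x}_t$ node by node, averages over $i$, and only then uses convexity of $h$ through Jensen, $h(\ol{\z}_{t+1})\leq\frac{1}{n}\sum_{i=1}^n h(\z_{t+1}^i)$, together with $\ol{\x}_{t+1}=\ol{\z}_{t+1}$ and $\ol{\x}_{t+1}=\ol{\x}_t-\alpha\ol{\sgp}_t$ from \eqref{sgp_ave_d}. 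This is what makes the argument valid for arbitrary proper closed convex $h$; it also directly produces the per-node gradient mappings $\gp(\x_t^i)$ appearing in the metric \eqref{metric}, rather than $\gp(\ol{\x}_t)$.

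The second gap is quantitative and would destroy the claimed linear speedup. You track the average of per-node SARAH errors $\frac{1}{n}\sum_{i}\E[\|\mb{v}_t^i-\n f_i(\x_t^i)\|^2]$ with the per-node increment $\frac{L^2}{b}\|\x_t^i-\x_{t-1}^i\|^2$. But what enters the descent inequality is the error of the \emph{averaged} estimator, $\E[\|\ol{\mb{v}}_t-\ol{\nf}(\x_t)\|^2]$ (since $\ol{\y}_{t+1}=\ol{\mb{v}}_t$ by \eqref{y_track}), and because the SARAH increments are conditionally zero-mean and independent \emph{across nodes}, all cross terms vanish and this equals $\frac{1}{n^2}\sum_i\E[\|\mb{v}_t^i-\n f_i(\x_t^i)\|^2]$ --- a factor $n$ smaller than what Jensen gives. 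This is precisely the $\frac{L^2 q}{n^2 b}$ (equivalently, after Lemma~\ref{lem_x_diff}, the $\frac{qL^2\alpha^2}{nb}$) coefficient in Lemma~\ref{lem_VR}\eqref{lem_VR_E}. With Jensen alone, absorbing the accumulated estimation error into $-\sum_t\|\ol{\sgp}_t\|^2$ forces $\alpha\lesssim\sqrt{b/q}\,/L$ instead of $\alpha\lesssim\sqrt{nb/q}\,/L$; since $q/b=\min\{n,\sqrt{nm}\}$ under the theorem's choices $q\asymp\sqrt{nm}$, $b\asymp\max\{\sqrt{m/n},1\}$, you could no longer take $\alpha\asymp 1/L$, the iteration count inflates by $\sqrt{\min\{n,\sqrt{nm}\}}$, and the per-node sample complexity degrades to roughly $\frac{L\Delta}{\epsilon^2}\sqrt{m}$, i.e., no speedup over a single node. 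So in addition to replacing the descent mechanism, you must carry out the variance bookkeeping at the level of $\ol{\mb{v}}_t$ and exploit cross-node conditional independence, as the paper does in the proof of Lemma~\ref{lem_VR}; with those two corrections your outline does deliver the theorem.
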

\noindent
Two remarks are in place:
\begin{itemize}
\item We conclude from Theorem~\ref{thm_SR_E} that under a moderate big-data condition $m\gtrsim n$, \texttt{ProxGT-SR-E} achieves a topology-independent sample complexity of $\mc{O}\left(\frac{L\Delta}{\epsilon^2}\sqrt{\frac{m}{n}}+m\right)$ at each node, leading to a linear speedup compared to the centralized optimal proximal finite-sum variance reduction methods \cite{sarah_ncvx,spiderboost,book_lan} implemented on a single node. To the best of our knowledge, this is the first such sample complexity result for the decentralized non-convex composite empirical risk minimization problem.

% For the special case $h = 0$, \texttt{ProxGT-SR-E} does not only improve the state-of-the-art sample complexity result~\cite{GT-SARAH}, but it is also network-independent?

% \item For the special case $h = 0$, \texttt{ProxGT-SR-E} also improves the state-of-the-art sample complexity result~\cite{GT-SARAH}. In particular, the sample complexity of GT-SARAH, proposed in~\cite{GT-SARAH}, matches the centralized optimal methods in a certain network-dependent regime where the local sample size is large, i.e., $m\gtrsim n(1-\ltrue)^{-6}$. \texttt{ProxGT-SR-E} improves this regime to $m\gtrsim n$ by performing $K \asymp \frac{\log(n\zeta)}{1-\ltrue}$ rounds of consensus updates per iteration and the resulting complexity is universally optimal in the sense that it matches the centralized blah blah .

\item For the special case $h = 0$, \texttt{ProxGT-SR-E} also improves the state-of-the-art sample complexity result achieved by \texttt{GT-SARAH}~\cite{GT-SARAH} in the following sense. \texttt{GT-SARAH} matches the centralized optimal methods in the regime that the local sample size is large enough, i.e., $m\gtrsim n(1-\ltrue)^{-6}$. \texttt{ProxGT-SR-E} improves this regime to $m\gtrsim n$ by performing $K \asymp \frac{\log(n\zeta)}{1-\ltrue}$ rounds of consensus updates per iteration.
\end{itemize}

\subsection{Improving Communication Complexity via Accelerated Consensus}
As a standard practice, it is possible to employ accelerated consensus algorithms, e.g.,~\cite{acc_ac_morse,acc_ac_olshevsky,best_neurips}, to implement the multiple consensus step $\W^K$ in \texttt{ProxGT} to achieve improved communication complexities. The basic intuition is that the standard consensus algorithm $\x_{t+1} = \W\x_{t}$ returns an $\delta$-accurate average of the initial states $\frac{1}{n}\sum_{i=1}^n\x_1^i$ in $\mc{O}\big(\frac{1}{1-\lambda}\log\frac{1}{\delta}\big)$ rounds of communication, while the accelerated algorithms~\cite{acc_ac_morse,acc_ac_olshevsky,best_neurips}, only take $\mc{O}\big(\frac{1}{\sqrt{1-\lambda}}\log\frac{1}{\delta}\big)$ rounds of communication.

In particular, we can replace $\W^K$ by a Chebyshev type polynomial of $\W$; see, for instance,~\cite[Section 4.2]{best_neurips},~\cite[Section 3.2]{AccDVR}, and~\cite[Section V-C]{PD_Xu} for detailed constructions. In this case, the communication complexity of \texttt{ProxGT-SA} stated in Theorem~\ref{thm_SA} improves to
$$\mc{O}\left(\frac{L\Delta}{\epsilon^2}\cdot \frac{\log(n\zeta)}{\sqrt{1-\ltrue}}\right),$$
while the communication complexity of \texttt{ProxGT-SR-O} stated in Theorem~\ref{thm_SR_O} improves to
$$\mc{O}\left(\left(\frac{L\Delta}{\epsilon^2}+\frac{\nu}{\epsilon}\right)\frac{\log(n\zeta)}{\sqrt{1-\ltrue}}\right),$$
and the communication complexity of \texttt{ProxGT-SR-E} stated in Theorem~\ref{thm_SR_E} improves to
$$\mc{O}\left(\left(\frac{L\Delta}{\epsilon^2}+\sqrt{nm}\right)\frac{\log\zeta}{\sqrt{1-\ltrue}}\right).$$
We omit the detailed calculations here for conciseness.

\section{Conclusions}\label{sec_concl}
In this paper, we focus on decentralized non-convex composite optimization problems over networked nodes, where the network cost is the average of local, smooth, possibly non-convex risk functions plus an extended valued, convex, possibly non-differentiable regularizer. To address this general formulation, we introduce a unified framework, called \texttt{ProxGT}, that is built upon local stochastic gradient estimators and a global gradient tracking technique. We construct several different instantiations of this framework by choosing appropriate local estimators for the corresponding problem classes. In particular, we develop~\texttt{ProxGT-SA} and~\texttt{ProxGT-SR-O} for the population risk, and~\texttt{ProxGT-SR-E} for the empirical risk. Remarkably, we show that each algorithm achieves a network topology-independent sample complexity at each node, leading to a linear speedup compared to its centralized optimal counterpart.

%\section{Numerical Experiments}\label{sec_exp}

{\bibliographystyle{plain}
\bibliography{reference_xin.bib}}

\begin{thebibliography}{10}

\bibitem{PGD_Yuan}
S.~Alghunaim, K.~Yuan, and A.~H. Sayed.
\newblock A linearly convergent proximal gradient algorithm for decentralized
  optimization.
\newblock {\em Advances in Neural Information Processing Systems},
  32:2848--2858, 2019.

\bibitem{LR_NCVX}
A.~Antoniadis, I.~Gijbels, and M.~Nikolova.
\newblock Penalized likelihood regression for generalized linear models with
  non-quadratic penalties.
\newblock {\em Annals of the Institute of Statistical Mathematics},
  63(3):585--615, 2011.

\bibitem{lowerbound_sgd}
Y.~Arjevani, Y.~Carmon, J.~C. Duchi, D.~J. Foster, N.~Srebro, and B.~Woodworth.
\newblock Lower bounds for non-convex stochastic optimization.
\newblock {\em arXiv preprint arXiv:1912.02365}, 2019.

\bibitem{SGP_ICML}
M.~Assran, N.~Loizou, N.~Ballas, and M.~Rabbat.
\newblock Stochastic gradient push for distributed deep learning.
\newblock In {\em Proceedings of the 36th International Conference on Machine
  Learning}, pages 97: 344--353, 2019.

\bibitem{book_Beck2}
A.~Beck.
\newblock {\em First-order methods in optimization}.
\newblock SIAM, 2017.

\bibitem{PDSGD_NCVX}
P.~Bianchi and J.~Jakubowicz.
\newblock Convergence of a multi-agent projected stochastic gradient algorithm
  for non-convex optimization.
\newblock {\em IEEE Trans. Autom. Control}, 58(2):391--405, 2012.

\bibitem{PDP0_Luo}
C.~Chen, J.~Zhang, L.~Shen, P.~Zhao, and Z.~Luo.
\newblock Communication efficient primal-dual algorithm for nonconvex nonsmooth
  distributed optimization.
\newblock In {\em International Conference on Artificial Intelligence and
  Statistics}, pages 1594--1602. PMLR, 2021.

\bibitem{diffusion_Chen}
J.~Chen and A.~H. Sayed.
\newblock Diffusion adaptation strategies for distributed optimization and
  learning over networks.
\newblock {\em IEEE Trans. Signal Process.}, 60(8):4289--4305, 2012.

\bibitem{MF_chi}
Y.~Chi, Y.~M. Lu, and Y.~Chen.
\newblock Nonconvex optimization meets low-rank matrix factorization: An
  overview.
\newblock {\em IEEE Trans. Signal Process.}, 67(20):5239--5269, 2019.

\bibitem{NEXT_scutari}
P.~Di~Lorenzo and G.~Scutari.
\newblock {NEXT: I}n-network nonconvex optimization.
\newblock {\em IEEE Trans. Signal Inf. Process. Netw. Process.}, 2(2):120--136,
  2016.

\bibitem{spider}
C.~Fang, C.~J. Li, Z.~Lin, and T.~Zhang.
\newblock {SPIDER:} near-optimal non-convex optimization via stochastic
  path-integrated differential estimator.
\newblock In {\em Proc. Adv. Neural Inf. Process. Syst.}, pages 689--699, 2018.

\bibitem{mirror_prox_Lan}
S.~Ghadimi, G.~Lan, and H.~Zhang.
\newblock Mini-batch stochastic approximation methods for nonconvex stochastic
  composite optimization.
\newblock {\em Mathematical Programming}, 155(1-2):267--305, 2016.

\bibitem{digraph}
B.~Gharesifard and J.~Cort{\'e}s.
\newblock Distributed strategies for generating weight-balanced and doubly
  stochastic digraphs.
\newblock {\em European Journal of Control}, 18(6):539--557, 2012.

\bibitem{matrix_analysis}
R.~A. Horn and C.~R. Johnson.
\newblock {\em Matrix analysis}.
\newblock Cambridge University Press, 2012.

\bibitem{SED_Pu}
K.~Huang and S.~Pu.
\newblock Improving the transient times for distributed stochastic gradient
  methods.
\newblock {\em arXiv preprint arXiv:2105.04851}, 2021.

\bibitem{nesterov_jakovetic}
D.~Jakoveti{\'c}, J.~Xavier, and J.~M.~F. Moura.
\newblock Fast distributed gradient methods.
\newblock {\em IEEE Trans. Autom. Control}, 59(5):1131--1146, 2014.

\bibitem{DGD_Kar}
S.~Kar, J.~M.~F. Moura, and K.~Ramanan.
\newblock Distributed parameter estimation in sensor networks: Nonlinear
  observation models and imperfect communication.
\newblock {\em IEEE Trans. Inf. Theory}, 58(6):3575--3605, 2012.

\bibitem{free_bits}
D.~Kovalev, A.~Koloskova, M.~Jaggi, P.~Richtarik, and S.~Stich.
\newblock A linearly convergent algorithm for decentralized optimization:
  Sending less bits for free!
\newblock In {\em International Conference on Artificial Intelligence and
  Statistics}, pages 4087--4095. PMLR, 2021.

\bibitem{book_lan}
G.~Lan.
\newblock {\em First-order and Stochastic Optimization Methods for Machine
  Learning}.
\newblock Springer, 2020.

\bibitem{gradslide_lan}
G.~Lan, Y.~Ouyang, and Y.~Zhou.
\newblock Graph topology invariant gradient and sampling complexity for
  decentralized and stochastic optimization.
\newblock {\em arXiv preprint arXiv:2101.00143}, 2021.

\bibitem{Network-DANE}
B.~Li, S.~Cen, Y.~Chen, and Y.~Chi.
\newblock Communication-efficient distributed optimization in networks with
  gradient tracking and variance reduction.
\newblock {\em J. Mach. Learn. Res.}, 21(180):1--51, 2020.

\bibitem{EXTRA_revisit}
H.~Li and Z.~Lin.
\newblock Revisiting {EXTRA} for smooth distributed optimization.
\newblock {\em SIAM J. Optim.}, 30(3):1795--1821, 2020.

\bibitem{AccDVR}
H.~Li, Z.~Lin, and Y.~Fang.
\newblock Optimal accelerated variance reduced {EXTRA} and {DIGing} for
  strongly convex and smooth decentralized optimization.
\newblock {\em arXiv preprint arXiv:2009.04373}, 2020.

\bibitem{PGD_Yan}
Y.~Li, X.~Liu, J.~Tang, M.~Yan, and K.~Yuan.
\newblock Decentralized composite optimization with compression.
\newblock {\em arXiv preprint arXiv:2108.04448}, 2021.

\bibitem{NIDS}
Z.~Li, W.~Shi, and M.~Yan.
\newblock A decentralized proximal-gradient method with network independent
  step-sizes and separated convergence rates.
\newblock {\em IEEE Trans. Signal Process.}, 67(17):4494--4506, 2019.

\bibitem{DSGD_nips}
X.~Lian, C.~Zhang, H.~Zhang, C.-J. Hsieh, W.~Zhang, and J.~Liu.
\newblock {Can decentralized algorithms outperform centralized algorithms? {A}
  case study for decentralized parallel stochastic gradient descent}.
\newblock In {\em Adv. Neural Inf. Process. Syst.}, pages 5330--5340, 2017.

\bibitem{HSARAH}
D.~Liu, L.~M. Nguyen, and Q.~Tran-Dinh.
\newblock An optimal hybrid variance-reduced algorithm for stochastic composite
  nonconvex optimization.
\newblock {\em arXiv preprint arXiv:2008.09055}, 2020.

\bibitem{acc_ac_morse}
J.~Liu and A.~S. Morse.
\newblock Accelerated linear iterations for distributed averaging.
\newblock {\em Annual Reviews in Control}, 35(2):160--165, 2011.

\bibitem{GNSD}
S.~Lu, X.~Zhang, H.~Sun, and M.~Hong.
\newblock {GNSD: A gradient-tracking based nonconvex stochastic algorithm for
  decentralized optimization}.
\newblock In {\em 2019 IEEE Data Science Workshop}, pages 315--321, 2019.

\bibitem{optimal_dsgt}
Y.~Lu and C.~De~Sa.
\newblock Optimal complexity in decentralized training.
\newblock In {\em International Conference on Machine Learning}, pages
  7111--7123, 2021.

\bibitem{dopt_control_nedich}
A.~Nedi{\'c} and J.~Liu.
\newblock Distributed optimization for control.
\newblock {\em Annual Review of Control, Robotics, and Autonomous Systems},
  1:77--103, 2018.

\bibitem{tutorial_nedich}
A.~Nedi{\'c}, A.~Olshevsky, and M.~G. Rabbat.
\newblock Network topology and communication-computation tradeoffs in
  decentralized optimization.
\newblock {\em P. IEEE}, 106(5):953--976, 2018.

\bibitem{DIGing}
A.~Nedich, A.~Olshevsky, and W.~Shi.
\newblock Achieving geometric convergence for distributed optimization over
  time-varying graphs.
\newblock {\em SIAM J. Optim.}, 27(4):2597--2633, 2017.

\bibitem{DGD_nedic}
A.~Nedich and A.~Ozdaglar.
\newblock Distributed subgradient methods for multi-agent optimization.
\newblock {\em IEEE Trans. Autom. Control}, 54(1):48, 2009.

\bibitem{acc_ac_olshevsky}
A.~Olshevsky.
\newblock Linear time average consensus and distributed optimization on fixed
  graphs.
\newblock {\em SIAM Journal on Control and Optimization}, 55(6):3990--4014,
  2017.

\bibitem{D-SPIDER-SFO}
T.~Pan, J.~Liu, and J.~Wang.
\newblock {D-SPIDER-SFO: A} decentralized optimization algorithm with faster
  convergence rate for nonconvex problems.
\newblock In {\em Proceedings of the AAAI Conference on Artificial
  Intelligence}, volume~34, pages 1619--1626, 2020.

\bibitem{sarah_ncvx}
N.~H. Pham, L.~M. Nguyen, D.~T. Phan, and Q.~Tran-Dinh.
\newblock {ProxSARAH:} an efficient algorithmic framework for stochastic
  composite nonconvex optimization.
\newblock {\em J. Mach. Learn. Res.}, 21(110):1--48, 2020.

\bibitem{MP_Pu}
S.~Pu and A.~Nedich.
\newblock Distributed stochastic gradient tracking methods.
\newblock {\em Math. Program.}, pages 1--49, 2020.

\bibitem{DSGD_Pu}
S.~Pu, A.~Olshevsky, and I.~C. Paschalidis.
\newblock A sharp estimate on the transient time of distributed stochastic
  gradient descent.
\newblock {\em arXiv preprint arXiv:1906.02702}, 2019.

\bibitem{PushPull_Pu}
S.~Pu, W.~Shi, J.~Xu, and A.~Nedich.
\newblock A push-pull gradient method for distributed optimization in networks.
\newblock In {\em IEEE Conference on Decision and Control}, pages 3385--3390,
  2018.

\bibitem{harnessing}
G.~Qu and N.~Li.
\newblock Harnessing smoothness to accelerate distributed optimization.
\newblock {\em IEEE Trans. Control. Netw. Syst.}, 5(3):1245--1260, 2017.

\bibitem{SAGA_reddi}
S.~J. Reddi, S.~Sra, B.~P{\'o}czos, and A.~Smola.
\newblock Fast incremental method for smooth nonconvex optimization.
\newblock In {\em Proc. IEEE Conf. Decis. Control}, pages 1971--1977, 2016.

\bibitem{SA}
H.~Robbins and S.~Monro.
\newblock A stochastic approximation method.
\newblock {\em The annals of mathematical statistics}, pages 400--407, 1951.

\bibitem{best_neurips}
K.~Scaman, F.~Bach, S.~Bubeck, Y.~T. Lee, and L.~Massouli{\'e}.
\newblock Optimal algorithms for smooth and strongly convex distributed
  optimization in networks.
\newblock In {\em international conference on machine learning}, pages
  3027--3036. PMLR, 2017.

\bibitem{MP_scutari}
G.~Scutari and Y.~Sun.
\newblock Distributed nonconvex constrained optimization over time-varying
  digraphs.
\newblock {\em Math. Program.}, 176(1-2):497--544, 2019.

\bibitem{SP_book}
A.~Shapiro, D.~Dentcheva, and A.~Ruszczy{\'n}ski.
\newblock {\em Lectures on stochastic programming: modeling and theory}.
\newblock SIAM, 2014.

\bibitem{EXTRA}
W.~Shi, Q.~Ling, G.~Wu, and W.~Yin.
\newblock {EXTRA: A}n exact first-order algorithm for decentralized consensus
  optimization.
\newblock {\em SIAM J. Optim.}, 25(2):944--966, 2015.

\bibitem{D_Get}
H.~Sun, S.~Lu, and M.~Hong.
\newblock Improving the sample and communication complexity for decentralized
  non-convex optimization: Joint gradient estimation and tracking.
\newblock In {\em Proceedings of the 37th International Conference on Machine
  Learning}, volume 119, pages 9217--9228, 13--18 Jul 2020.

\bibitem{Prox_GT_Di_scutari}
Y.~Sun, A.~Daneshmand, and G.~Scutari.
\newblock Distributed optimization based on gradient-tracking revisited:
  Enhancing convergence rate via surrogation.
\newblock {\em arXiv preprint arXiv:1905.02637}, 2020.

\bibitem{DSGD_Brian}
B.~Swenson, R.~Murray, S.~Kar, and H.~V. Poor.
\newblock Distributed stochastic gradient descent: Nonconvexity, nonsmoothness,
  and convergence to local minima.
\newblock {\em arXiv preprint arXiv:2003.02818}, 2020.

\bibitem{D2}
H.~Tang, X.~Lian, M.~Yan, C.~Zhang, and J.~Liu.
\newblock {$D^{2}$: D}ecentralized training over decentralized data.
\newblock In {\em International Conference on Machine Learning}, pages
  4848--4856, 2018.

\bibitem{DSGD_vlaski_2}
S.~Vlaski and A.~H. Sayed.
\newblock Distributed learning in non-convex environments—part ii: Polynomial
  escape from saddle-points.
\newblock {\em IEEE Trans. on Signal Process.}, 69:1257--1270, 2021.

\bibitem{GT_Wai}
H.~Wai, J.~Lafond, A.~Scaglione, and E.~Moulines.
\newblock Decentralized frank--wolfe algorithm for convex and nonconvex
  problems.
\newblock {\em IEEE Trans. Autom. Control}, 62(11):5522--5537, 2017.

\bibitem{spiderboost}
Z.~Wang, K.~Ji, Y.~Zhou, Y.~Liang, and V.~Tarokh.
\newblock Spiderboost and momentum: Faster variance reduction algorithms.
\newblock In {\em Proc. Adv. Neural Inf. Process. Syst.}, pages 2403--2413,
  2019.

\bibitem{PDP_Luo}
Z.~Wang, J.~Zhang, T.~Chang, J.~Li, and Z.~Luo.
\newblock Distributed stochastic consensus optimization with momentum for
  nonconvex nonsmooth problems.
\newblock {\em IEEE Trans. Signal Process.}, 2021.

\bibitem{PD_Lu}
X.~Wu and J.~Lu.
\newblock A unifying approximate method of multipliers for distributed
  composite optimization.
\newblock {\em arXiv:2009.12732}, 2020.

\bibitem{SPM_Xin}
R.~Xin, S.~Kar, and U.~A. Khan.
\newblock Decentralized stochastic optimization and machine learning: A unified
  variance-reduction framework for robust performance and fast convergence.
\newblock {\em IEEE Signal Process. Mag.}, 37(3):102--113, 2020.

\bibitem{AB_Xin}
R.~Xin and U.~A. Khan.
\newblock A linear algorithm for optimization over directed graphs with
  geometric convergence.
\newblock {\em IEEE Control Systems Letters}, 2(3):315--320, 2018.

\bibitem{GTVR}
R.~Xin, U.~A. Khan, and S.~Kar.
\newblock Variance-reduced decentralized stochastic optimization with
  accelerated convergence.
\newblock {\em IEEE Trans. Signal Process.}, 68:6255--6271, 2020.

\bibitem{GT-SARAH}
R.~Xin, U.~A. Khan, and S.~Kar.
\newblock Fast decentralized non-convex finite-sum optimization with recursive
  variance reduction.
\newblock {\em SIAM Journal on Optimization, to appear}, 2021.

\bibitem{GTSAGA_NCVX}
R.~Xin, U.~A. Khan, and S.~Kar.
\newblock A fast randomized incremental gradient method for decentralized
  non-convex optimization.
\newblock {\em IEEE Trans. Autom. Control, to appear}, 2021.

\bibitem{HSGD_Xin}
R.~Xin, U.~A. Khan, and S.~Kar.
\newblock A hybrid variance-reduced method for decentralized stochastic
  non-convex optimization.
\newblock In {\em Proceedings of the 38th International Conference on Machine
  Learning}, pages 11459--11469, 18--24 July 2021.

\bibitem{improved_DSGT_Xin}
R.~Xin, U.~A. Khan, and S.~Kar.
\newblock An improved convergence analysis for decentralized online stochastic
  non-convex optimization.
\newblock {\em IEEE Trans. Signal Process.}, 69:1842--1858, 2021.

\bibitem{PIEEE_Xin}
R.~Xin, S.~Pu, A.~Nedi{\'c}, and U.~A. Khan.
\newblock A general framework for decentralized optimization with first-order
  methods.
\newblock {\em Proceedings of the IEEE}, 108(11):1869--1889, 2020.

\bibitem{acc_pd_xu}
J.~Xu, Y.~Tian, Y.~Sun, and G.~Scutari.
\newblock Accelerated primal-dual algorithms for distributed smooth convex
  optimization over networks.
\newblock In {\em International Conference on Artificial Intelligence and
  Statistics}, pages 2381--2391, 2020.

\bibitem{PD_Xu}
J.~Xu, Y.~Tian, Y.~Sun, and G.~Scutari.
\newblock Distributed algorithms for composite optimization: Unified framework
  and convergence analysis.
\newblock {\em IEEE Trans. Signal Process.}, 2021.

\bibitem{GT_CDC}
J.~Xu, S.~Zhu, Y.~C. Soh, and L.~Xie.
\newblock Augmented distributed gradient methods for multi-agent optimization
  under uncoordinated constant stepsizes.
\newblock In {\em Proc. IEEE Conf. Decis. Control}, pages 2055--2060, 2015.

\bibitem{SED2_Yuan}
K.~Yuan and S.~A. Alghunaim.
\newblock Removing data heterogeneity influence enhances network topology
  dependence of decentralized sgd.
\newblock {\em arXiv preprint arXiv:2105.08023}, 2021.

\bibitem{SED}
K.~Yuan, S.~A. Alghunaim, B.~Ying, and A.~H. Sayed.
\newblock On the influence of bias-correction on distributed stochastic
  optimization.
\newblock {\em IEEE Trans. Signal Process.}, 2020.

\bibitem{DLam_Yuan}
K.~Yuan, Y.~Chen, X.~Huang, Y.~Zhang, P.~Pan, Y.~Xu, and W.~Yin.
\newblock Decentlam: Decentralized momentum sgd for large-batch deep training.
\newblock {\em arXiv preprint arXiv:2104.11981}, 2021.

\bibitem{Prox_DGD}
J.~Zeng and W.~Yin.
\newblock On nonconvex decentralized gradient descent.
\newblock {\em IEEE Trans. Signal Process.}, 66(11):2834--2848, 2018.

\end{thebibliography}

\newpage

\appendix
\section{Proofs of the Main Results}\label{sec_analysis}
In this section, we describe a unified analysis for the proposed \texttt{Prox-GT} framework. Throughout the rest of the paper, we let Assumption~\ref{asp_f},~\ref{asp_net}, and~\ref{asp_unbias} hold without explicit statements. 

\subsection{Preliminaries}\label{app_prelim}
We start by introducing some additional notations for Algorithm~\ref{PGT},~\ref{PGT_SA},~\ref{PGT_SR_O}, and~\ref{PGT_SR_E}. 
We find it convenient to abstract the local proximal descent step by a \emph{stochastic gradient mapping}:~$\forall t\geq1$ and~$i\in\mc{V}$,
\begin{align}\label{sgp}
\sgp_{t}^i 
:= \frac{1}{\alpha}\big(\x_t^i - \z_{t+1}^{i}\big).
\end{align}
For all~$t\geq1$, we let
\begin{align*}
\sgp_t := 
\begin{bmatrix}
\sgp_t^1\\
\cdots \\
\sgp_t^n
\end{bmatrix},
\qquad
\nf(\x_t) := 
\begin{bmatrix}
\n f_1(\x_t^1)\\
\cdots \\
\n f_n(\x_t^n)
\end{bmatrix},
\end{align*}
and define the following network mean states:
\begin{align*}
&\ol{\x}_t := \frac{1}{n}\sum_{i=1}^n \x_t^i,
\qquad
\ol{\y}_t := \frac{1}{n}\sum_{i=1}^n \y_t^i,
\qquad
\ol{\z}_t := \frac{1}{n}\sum_{i=1}^n \z_t^i,
\nonumber\\
&
\ol{\mb{v}}_t := \frac{1}{n}\sum_{i=1}^n \mb{v}_t^i,   
\qquad
\ol{\sgp}_t := \frac{1}{n}\sum_{i=1}^n\sgp_t^i,
\qquad
\ol{\nf}(\x_t) := \frac{1}{n}\sum_{i=1}^n\n f_i(\x_t^i).
\end{align*}
In addition, we define the exact averaging matrix $$\J := \left(\frac{1}{n}\mb{1}_n\mb{1}_n^\top\right) \otimes \I_p.$$
Averaging~\eqref{sgp} over~$i$ from~$1$ to~$n$ gives:~$\forall t\geq1$,
\begin{align}\label{sgp_ave}
\ol{\z}_{t+1} 
= \ol{\x}_{t} - \a\ol{\sgp}_{t}.
\end{align}
We multiply~$\frac{1}{n}(\mb{1}_n^\top\otimes\I_p)$ to the~$\x$-update of Algorithm~\ref{PGT} to obtain:~$\forall t\geq1$,
\begin{align}\label{z_track}
\ol{\x}_{t+1} = \ol{\z}_{t+1}.
\end{align}
Combining~\eqref{sgp_ave} and~\eqref{z_track} yields:~$\forall t\geq1$,
\begin{align}\label{sgp_ave_d}
\ol{\x}_{t+1}    
= \ol{\x}_{t} - \a\ol{\sgp}_{t}.
\end{align}
Throughout the analysis, we fix arbitrary~$K\geq1$ and denote
\begin{align}\label{lambda_K}
\lambda := \ltrue^K.   
\end{align}
% \begin{rmk}
% Clearly, the general notations presented above are applicable to Algorithm~\ref{PGT_SA},~\ref{PGT_SR_O}, and~\ref{PGT_SR_E}.
% \end{rmk}

\subsection{Basic Facts}
This section presents several basic facts that are used frequently in our analysis.
We make use of a well-known non-expansiveness result for proximal mappings. 
\begin{lem}[{\cite[Theorem 6.4.2]{book_Beck2}}]\label{lem_prox_nonexp}
Let~$h:\R^{p}\ra\R\cup\{+\infty\}$ be a proper, closed, and convex function. Then we have the following:
$$
\|\p_{h}(\x) - \p_{h}(\y)\|
\leq \|\x - \y\|, 
\quad
\forall \x,\y\in\R^p.
$$
\end{lem}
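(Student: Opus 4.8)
The plan is to combine the first-order optimality characterization of the proximal subproblem with the monotonicity of the subdifferential of a convex function. First I would observe that, by definition~\eqref{def_prox}, the point~$\u := \p_{h}(\x)$ is the unique minimizer of the strongly convex map~$\frac{1}{2}\|\cdot - \x\|^2 + h(\cdot)$; writing down its first-order optimality condition, this minimality is equivalent to the inclusion~$\x - \u \in \partial h(\u)$. Applying the identical reasoning to~$\mb{w} := \p_{h}(\y)$ yields~$\y - \mb{w} \in \partial h(\mb{w})$.

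Next I would invoke the monotonicity of~$\partial h$. Since~$h$ is convex, I can add the two subgradient inequalities: the one defining~$\x - \u \in \partial h(\u)$ evaluated at~$\mb{w}$, and the one defining~$\y - \mb{w} \in \partial h(\mb{w})$ evaluated at~$\u$. After cancellation of the function values this produces~$\l (\x - \u) - (\y - \mb{w}), \u - \mb{w} \r \geq 0$, which rearranges to~$\l \x - \y, \u - \mb{w}\r \geq \|\u - \mb{w}\|^2$.

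Finally, I would close with the Cauchy--Schwarz inequality, which bounds the left-hand side above by~$\|\x - \y\|\,\|\u - \mb{w}\|$, so that~$\|\u - \mb{w}\|^2 \leq \|\x - \y\|\,\|\u - \mb{w}\|$. If~$\u = \mb{w}$ the claimed estimate is trivial; otherwise dividing through by~$\|\u - \mb{w}\| > 0$ gives~$\|\u - \mb{w}\| \leq \|\x - \y\|$, which is exactly the assertion (and in fact one obtains the stronger firm non-expansiveness along the way). The argument is entirely standard, so I do not anticipate a genuine obstacle; the only point deserving care is that the inclusion~$\x - \u \in \partial h(\u)$ is here both \emph{necessary and sufficient} for optimality, a fact guaranteed by the convexity of~$h$ together with the strong convexity of the proximal objective, which also ensures the minimizer exists and is unique so that~$\p_{h}$ is well defined as a single-valued map.
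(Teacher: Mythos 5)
Your proposal is correct. The paper does not prove this lemma at all — it imports it directly as \cite[Theorem 6.42]{book_Beck2} — and your argument is precisely the standard proof given in that reference: the first-order optimality condition $\x - \p_h(\x) \in \partial h(\p_h(\x))$ for each prox subproblem, monotonicity of $\partial h$ to obtain the firm non-expansiveness inequality $\l \x - \y, \p_h(\x) - \p_h(\y)\r \geq \|\p_h(\x) - \p_h(\y)\|^2$, and Cauchy--Schwarz to conclude, with the properness, closedness, and convexity of $h$ correctly invoked to guarantee that the strongly convex subproblem has a unique minimizer so that $\p_h$ is single-valued.
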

For ease of reference, we give a trivial accumulation formula for scalar sequences with contraction. 
\begin{lem}\label{lem_acc}
Let~$\{a_t\}$ and $\{b_t\}$ be scalar sequences and~$0<q<1$, such that $$a_{t+1} \leq q a_{t} + b_t, \qquad\forall t\geq1.$$ Then for all $T\geq2$, we have $$\sum_{t=1}^{T}a_t\leq \frac{1}{1-q}a_1 + \frac{1}{1-q}\sum_{t=1}^{T-1}b_t$$ 
and 
$$\sum_{t=2}^{T+1}a_{t} 
\leq\frac{1}{1-q}a_2  
+ \frac{1}{1-q}\sum_{t=2}^{T}b_t.$$
\end{lem}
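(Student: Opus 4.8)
The plan is to establish the first inequality by a single telescope-and-rearrange step and then obtain the second one for free by relabeling the indices. Since the sequences $\{a_t\}$ that invoke this lemma in our analysis are consensus errors and squared stochastic-gradient-mapping quantities, they are nonnegative; the argument will use $a_t \geq 0$ (in fact only $a_T\geq 0$ is needed for the first bound), which is the sole feature it requires beyond $0 < q < 1$.

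First I would sum the contraction recursion $a_{t+1} \leq q a_t + b_t$ over $t = 1, \ldots, T-1$, obtaining
$$\sum_{t=2}^{T} a_t \leq q\sum_{t=1}^{T-1} a_t + \sum_{t=1}^{T-1} b_t.$$
Writing $S := \sum_{t=1}^{T} a_t$, the left-hand side equals $S - a_1$ and the first sum on the right equals $S - a_T$, so this rearranges to
$$(1-q)\,S \leq a_1 - q\,a_T + \sum_{t=1}^{T-1} b_t.$$
Dropping the term $-q\,a_T \leq 0$ (this is the one place where $a_T \geq 0$ enters) and dividing by $1 - q > 0$ yields the first claimed bound. There is no genuine obstacle here; the only bookkeeping to watch is the index ranges of the two shifted sums and the sign of the discarded term.

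For the second inequality I would avoid repeating the computation and instead pass to the shifted sequences $\tilde a_t := a_{t+1}$ and $\tilde b_t := b_{t+1}$. These satisfy $\tilde a_{t+1} = a_{t+2} \leq q\,a_{t+1} + b_{t+1} = q\,\tilde a_t + \tilde b_t$, so the first bound applies to $\{\tilde a_t\}$ and $\{\tilde b_t\}$ and reads $\sum_{t=1}^{T} \tilde a_t \leq \frac{1}{1-q}\tilde a_1 + \frac{1}{1-q}\sum_{t=1}^{T-1} \tilde b_t$. Re-expressing $\sum_{t=1}^{T}\tilde a_t = \sum_{t=2}^{T+1} a_t$, $\tilde a_1 = a_2$, and $\sum_{t=1}^{T-1}\tilde b_t = \sum_{t=2}^{T} b_t$ recovers the second bound verbatim (now using $a_{T+1}\geq 0$ in place of $a_T\geq 0$). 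As an alternative that makes the role of $0<q<1$ fully explicit, one could unroll the recursion to the closed form $a_t \leq q^{t-1} a_1 + \sum_{s=1}^{t-1} q^{t-1-s} b_s$, sum over $t$, interchange the order of summation in the resulting double sum, and bound each geometric tail $\sum_{j\geq 0} q^{j}$ by $\frac{1}{1-q}$; this route is slightly longer and requires nonnegativity of $a_1$ and of every $b_s$, so I would favor the telescoping argument above.
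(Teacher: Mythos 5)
Your proof is correct. The paper itself gives no argument at all: its ``proof'' consists of the remark that the result ``follows from standard arguments of convolution sums,'' which alludes to the unrolled form $a_t \le q^{t-1}a_1 + \sum_{s=1}^{t-1} q^{t-1-s} b_s$, summation over $t$, and an interchange of the order of summation --- precisely the alternative you sketch and set aside at the end. Your primary route (sum the recursion over $t=1,\dots,T-1$, recognize both sides as shifted copies of $S=\sum_{t=1}^T a_t$, and absorb $qS$ into the left-hand side) is therefore a genuinely different argument, and it buys something concrete: it needs only $a_T\ge 0$ (resp.\ $a_{T+1}\ge 0$ after your index shift), whereas the convolution route needs $a_1\ge 0$ and $b_s\ge 0$ for every $s$. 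You were also right to insist that \emph{some} nonnegativity hypothesis must be added, because the lemma as literally stated (arbitrary scalar sequences) is false: take $q=\tfrac12$, $b_t\equiv 0$, and $a_t=-2^{-(t-1)}$, so that $a_{t+1}=qa_t+b_t$ holds with equality, yet $\sum_{t=1}^T a_t = -2+2^{1-T} > -2 = \tfrac{1}{1-q}a_1 + \tfrac{1}{1-q}\sum_{t=1}^{T-1}b_t$. The omission is harmless in context, since every sequence to which the paper applies the lemma (consensus errors, gradient-tracking errors, expected squared norms) is nonnegative, exactly as you observe; but your proof makes explicit, and minimizes, the hypothesis that the paper leaves implicit.
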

\begin{proof}
The proof follows from standard arguments of convolution sums and we omit the details.
\end{proof}

The following lemma concerns the interplay between~$\W$ and~$\J$. We provide its proof for completeness.

\begin{lem}\label{lem_weight}
The following statements hold for all~$K\geq1$.
\begin{enumerate}[(a)]
\item $\W^K\J = \J\W^K = \J$. \label{W_equal}
\item $\big\|\W^K - \J\big\| = \ltrue^K$. \label{W_spectral}
\item $\big\|\W^K\x - \J\x\big\| \leq \ltrue^K\big\|\x - \J\x\big\|, \forall \x\in\R^{np}$. \label{W_contract}
\end{enumerate}
\end{lem}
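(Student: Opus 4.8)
The plan is to strip off the common Kronecker factor $\I_p$, reduce all three claims to elementary identities for the $n\times n$ matrices $\Wtrue$ and $\mb{P} := \frac{1}{n}\mb{1}_n\mb{1}_n^\top$, and then exploit that $\Wtrue\mb{P} = \mb{P}\Wtrue = \mb{P}$ (both immediate from $\Wtrue\mb{1}_n = \Wtrue^\top\mb{1}_n = \mb{1}_n$) together with the idempotence $\mb{P}^2 = \mb{P}$. Throughout I use $\J = \mb{P}\otimes\I_p$, $\W^K = \Wtrue^K\otimes\I_p$, and the identity $\ltrue = \|\Wtrue - \mb{P}\|$ from~\eqref{lambda}. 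For part~(\ref{W_equal}) I would first prove the scalar-level identities $\Wtrue^K\mb{P} = \mb{P}\Wtrue^K = \mb{P}$ by a one-line induction on $K$: the base case is $\Wtrue\mb{P} = \mb{P}$, and the step reads $\Wtrue^{K+1}\mb{P} = \Wtrue^K(\Wtrue\mb{P}) = \Wtrue^K\mb{P} = \mb{P}$ (symmetrically for $\mb{P}\Wtrue^K$). The Kronecker mixed-product rule $(A\otimes\I_p)(B\otimes\I_p) = (AB)\otimes\I_p$ then turns these into $\W^K\J = (\Wtrue^K\mb{P})\otimes\I_p = \J$ and $\J\W^K = \J$.

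For part~(\ref{W_spectral}) the key algebraic step is the polynomial identity $\W^K - \J = (\W - \J)^K$. This follows by induction using only part~(\ref{W_equal}) and $\J^2 = \J$: indeed $(\W-\J)^{K+1} = (\W^K - \J)(\W - \J) = \W^{K+1} - \W^K\J - \J\W + \J^2 = \W^{K+1} - \J$, since $\W^K\J = \J\W = \J$ and $\J^2 = \J$. Taking norms gives $\|\W^K - \J\| = \|(\W-\J)^K\|$, and peeling off $\I_p$ yields $\|\W - \J\| = \|(\Wtrue - \mb{P})\otimes\I_p\| = \|\Wtrue - \mb{P}\| = \ltrue$. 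Submultiplicativity then immediately delivers the upper bound $\|\W^K - \J\| \le \ltrue^K$. For the matching lower bound I would use the singular-value structure of $\Wtrue - \mb{P}$: since $\mb{1}_n$ is simultaneously a left and right singular vector of $\Wtrue$ with singular value $1$, the matrix $\Wtrue$ leaves $\mathrm{span}(\mb{1}_n)^\perp$ invariant and coincides there with $\Wtrue - \mb{P}$, so the nonzero singular values of $\Wtrue - \mb{P}$ are exactly the singular values $\sigma_2(\Wtrue),\dots,\sigma_n(\Wtrue)$ of $\Wtrue$, the largest being $\ltrue$; when $\Wtrue$ is symmetric (normal) the spectral theorem makes the $K$-th power act by raising these to the $K$-th power, which gives $\|(\Wtrue - \mb{P})^K\| = \ltrue^K$ and hence the claimed equality.

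Part~(\ref{W_contract}) then falls out of the first two. I would note that $(\W^K - \J)\J = \W^K\J - \J^2 = \J - \J = \mb{0}$ by part~(\ref{W_equal}) and idempotence, so that for every $\x\in\R^{np}$ the averaged component is annihilated: $\W^K\x - \J\x = (\W^K - \J)\x = (\W^K - \J)(\x - \J\x)$. Submultiplicativity together with part~(\ref{W_spectral}) then gives $\|\W^K\x - \J\x\| \le \|\W^K - \J\|\,\|\x - \J\x\| = \ltrue^K\,\|\x - \J\x\|$, as required.

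The routine parts are pure bookkeeping with the commuting relations and the Kronecker mixed-product rule. The one genuinely nontrivial point I expect is the \emph{tightness} in part~(\ref{W_spectral}): submultiplicativity only yields $\|\W^K - \J\|\le\ltrue^K$ for free, and the reverse inequality — that the spectral norm is exactly preserved under the $K$-th power — is where the spectral (normality) structure of $\Wtrue$ is used. The singular-vector argument above supplies it, being immediate once $\Wtrue$ is symmetric, since then singular values coincide with absolute eigenvalues and powers act diagonally.
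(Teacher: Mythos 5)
Your proposal follows the same route as the paper's own proof in all three parts: part~(\ref{W_equal}) by induction on the commutation relation $\W\J=\J\W=\J$, part~(\ref{W_spectral}) via the identity $\W^K-\J=(\W-\J)^K$, and part~(\ref{W_contract}) by writing $\W^K\x-\J\x=(\W^K-\J)(\x-\J\x)$ and invoking submultiplicativity. Parts~(\ref{W_equal}) and~(\ref{W_contract}) are complete and correct, and match the paper essentially line for line (the paper works directly with $\W$ and $\J$ rather than peeling off the Kronecker factor, but this is cosmetic).

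The one substantive point is the norm \emph{equality} in part~(\ref{W_spectral}). The paper disposes of $\|(\W-\J)^K\|=\ltrue^K$ in one line (``by the definition of the spectral norm''), which is not actually a justification: for a non-normal matrix $A$ one only has $\|A^K\|\leq\|A\|^K$, possibly strictly. You correctly identify this as the nontrivial step and prove the upper bound $\|\W^K-\J\|\leq\ltrue^K$ unconditionally, but your argument for the matching lower bound explicitly invokes symmetry/normality of $\Wtrue$. Assumption~\ref{asp_net} does not grant this: the graph is directed and $\Wtrue$ is only required to be doubly stochastic, and non-normal matrices satisfying Assumption~\ref{asp_net} do exist (e.g., $\Wtrue=\tfrac13(\I_3+P+Q)$ with $P,Q$ the permutation matrices of $(12)$ and $(123)$). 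So under the paper's hypotheses your proof establishes ``$\leq$'' in part~(\ref{W_spectral}) but not ``$=$''. This is a genuine gap relative to the lemma as stated --- though it is precisely the same gap hidden in the paper's one-line justification, and it is harmless in context: only the inequality direction is ever used, in part~(\ref{W_contract}) and in every downstream consensus and tracking bound. Had the lemma been stated with $\|\W^K-\J\|\leq\ltrue^K$, your proof would be complete and fully rigorous, and arguably more careful than the paper's.
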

\begin{proof}
Since~$\Wtrue$ is doubly stochastic, we have
\begin{align}\label{commute}
\W\J = \J\W = \J,   
\end{align}
which leads to part~\eqref{W_equal} by induction. Part~\eqref{W_spectral} follows from
\begin{align*}
\big\|\W^K - \J\big\| 
= \big\|(\W - \J)^K\big\|
= \ltrue^K,
\end{align*}
where the first equality uses~\eqref{commute} and the second equality uses the definition of the spectral norm of a matrix. Finally, part~\eqref{W_contract} is due to
\begin{align*}
\big\|\W^K\x - \J\x\big\|
= \big\|(\W^K - \J)(\x-\J\x)\big\|
\leq \big\|\W^K - \J\big\| \big\|\x - \J\x\big\|
= \ltrue^K\big\|\x - \J\x\big\|,
\end{align*}
where the first equality uses part~\eqref{W_equal} and~$\J^2 = \J$, and the last equality uses part~\eqref{W_spectral}.
\end{proof}

Finally, we present a simple yet useful decomposition inequality.

\begin{lem}\label{lem_x_diff}
Consider the iterates generated by Algorithm~\ref{PGT}. Then we have: $\forall T\geq2$,
\begin{align*}
\sum_{t=2}^{T}\|\x_t - \x_{t-1}\|^2
\leq 6\sum_{t=1}^{T}\|\x_t - \J\x_t\|^2
+ 3n\a^2\sum_{t=1}^{T-1}\|\ol{\sgp}_{t}\|^2,
\end{align*}
\end{lem}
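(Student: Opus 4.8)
The plan is to expand the one-step difference $\x_t - \x_{t-1}$ around the network average and reduce everything to the two quantities appearing on the right-hand side, namely the consensus error $\|\x_t - \J\x_t\|^2$ and the averaged stochastic gradient mapping $\|\ol{\sgp}_t\|^2$. Concretely, I would insert the exact averaging matrix $\J$ and use the decomposition
$$\x_t - \x_{t-1} = (\x_t - \J\x_t) - (\x_{t-1} - \J\x_{t-1}) + (\J\x_t - \J\x_{t-1}),$$
which splits the increment into two consensus-error terms plus the increment of the exact averages.

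The next step is to control the averaged increment $\J\x_t - \J\x_{t-1}$. Since $\J\x_t$ stacks $n$ copies of $\ol{\x}_t$, the block structure of $\J$ gives $\|\J\x_t - \J\x_{t-1}\|^2 = n\|\ol{\x}_t - \ol{\x}_{t-1}\|^2$, and the mean dynamics~\eqref{sgp_ave_d} yield $\ol{\x}_t - \ol{\x}_{t-1} = -\a\,\ol{\sgp}_{t-1}$, whence $\|\J\x_t - \J\x_{t-1}\|^2 = n\a^2\|\ol{\sgp}_{t-1}\|^2$. Applying the elementary bound $\|a+b+c\|^2 \leq 3(\|a\|^2+\|b\|^2+\|c\|^2)$ to the decomposition then produces, for each $t$,
$$\|\x_t - \x_{t-1}\|^2 \leq 3\|\x_t - \J\x_t\|^2 + 3\|\x_{t-1} - \J\x_{t-1}\|^2 + 3n\a^2\|\ol{\sgp}_{t-1}\|^2.$$

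Finally I would sum this over $t = 2, \dots, T$ and carry out the routine index bookkeeping: the first sum is bounded by $\sum_{t=1}^{T}\|\x_t - \J\x_t\|^2$, the shifted second sum equals $\sum_{t=1}^{T-1}\|\x_t - \J\x_t\|^2$ and is likewise bounded by the full sum, and together they yield the coefficient $6$; the last sum re-indexes to $\sum_{t=1}^{T-1}\|\ol{\sgp}_t\|^2$. This gives exactly the claimed inequality. There is no substantive obstacle here: the lemma is a purely deterministic algebraic identity, and the only points requiring minor care are selecting the insert-and-split decomposition so that the residual collapses into pure consensus errors, correctly passing from $\|\J\x_t - \J\x_{t-1}\|^2$ to $n\|\ol{\x}_t - \ol{\x}_{t-1}\|^2$ through the block structure of $\J$, and tracking the off-by-one shifts when the per-$t$ bound is summed.
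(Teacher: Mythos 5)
Your proposal is correct and follows essentially the same route as the paper's proof: the same insert-$\J$ decomposition of $\x_t - \x_{t-1}$, the same use of the mean dynamics~\eqref{sgp_ave_d} to convert $\|\J\x_t - \J\x_{t-1}\|^2$ into $n\a^2\|\ol{\sgp}_{t-1}\|^2$, the same three-term quadratic bound, and the same summation with index bookkeeping. No discrepancies to report.
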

\begin{proof}
We note that~$\forall t\geq2$,
\begin{align}\label{x_diff_0}
\|\x_t - \x_{t-1}\|^2
=&~\big\|\x_t - \J\x_t + \J\x_t - \J\x_{t-1} + \J\x_{t-1} - \x_{t-1}\big\|^2
\nonumber\\
\leq&~3\|\x_t - \J\x_t\|^2 + 3n\|\ol{\x}_t - \ol{\x}_{t-1}\|^2 + 3\|\x_{t-1} - \J\x_{t-1}\|^2,
\nonumber\\
\leq&~3\|\x_t - \J\x_t\|^2 + 3n\a^2\|\ol{\sgp}_{t-1}\|^2 + 3\|\x_{t-1} - \J\x_{t-1}\|^2,
\end{align}
where the second line uses~\eqref{sgp_ave_d}. Summing up~\eqref{x_diff_0} gives
\begin{align*}
\sum_{t=2}^{T}\|\x_t - \x_{t-1}\|^2
\leq&~3\sum_{t=2}^{T}\Big(\|\x_t - \J\x_t\|^2 + \|\x_{t-1} - \J\x_{t-1}\|^2\Big)
+ 3n\a^2\sum_{t=2}^{T}\|\ol{\sgp}_{t-1}\|^2
\nonumber\\
\leq&~6\sum_{t=1}^{T}\|\x_t - \J\x_t\|^2 + 
+ 3n\a^2\sum_{t=2}^{T}\|\ol{\sgp}_{t-1}\|^2
\end{align*}
which finishes the proof.
\end{proof}

\subsection{Descent Inequality and Error Bounds}
We first establish a key descent inequality in terms of the value of the global composite objective function~$\Psi$. This result plays a central role in our analysis.
\begin{lem}[\textbf{Descent}]\label{lem_descent}
Consider the iterates generated by Algorithm~\ref{PGT}. If~$0<\alpha\leq \frac{1}{8L}$, then we have:~$\forall t\geq1$,
\begin{align*}
\frac{1}{n}\sum_{t=1}^T\left(\sum_{i=1}^n\left\|\gp(\x_t^i)\right\|^2
+ L^2\left\|\x_{t} - \J\x_{t}\right\|^2\right) 
\leq&~\frac{8\Delta}{\a}
- \frac{1}{n}\sum_{t=1}^T\sum_{i=1}^n\left\|\sgp_{t}^i\right\|^2
+ 76\sum_{t=1}^T\left\|\ol{\mb{v}}_{t} - \ol{\nf}(\x_t)\right\|^2
\nonumber\\
&+ \frac{6}{\alpha^2n}\sum_{t=1}^T\left\|\x_{t} - \J\x_{t}\right\|^2
+ \frac{10}{n}\sum_{t=2}^{T+1}\|\y_{t} - \J\y_{t}\|^2.
\end{align*}
\end{lem}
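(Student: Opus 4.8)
The plan is to derive a one-step descent inequality for the mean iterate $\ol{\x}_t$ in terms of the global composite objective $\Psi$, and then telescope. The starting point is the averaged recursion $\ol{\x}_{t+1} = \ol{\x}_t - \a\ol{\sgp}_t$ from~\eqref{sgp_ave_d}. For the smooth part I would apply $L$-smoothness of $F$ to get $F(\ol{\x}_{t+1}) \le F(\ol{\x}_t) - \a\langle\n F(\ol{\x}_t),\ol{\sgp}_t\rangle + \frac{L\a^2}{2}\|\ol{\sgp}_t\|^2$. For the nonsmooth part the difficulty is that $\ol{\x}_{t+1} = \ol{\z}_{t+1}$ is an \emph{average} of the per-node prox outputs $\z_{t+1}^i = \p_{\alpha h}(\x_t^i - \alpha\y_{t+1}^i)$, so no single proximal identity applies. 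I would first invoke convexity of $h$ to write $h(\ol{\z}_{t+1}) \le \frac1n\sum_i h(\z_{t+1}^i)$, then bound each $h(\z_{t+1}^i)$ via the first-order optimality of the prox subproblem, namely $\sgp_t^i - \y_{t+1}^i \in \partial h(\z_{t+1}^i)$, comparing against the anchor point $\ol{\x}_t$. Adding the two bounds and using $\z_{t+1}^i - \ol{\x}_t = (\x_t^i - \ol{\x}_t) - \a\sgp_t^i$ together with $\ol{\sgp}_t = \frac1n\sum_i\sgp_t^i$ yields $\Psi(\ol{\x}_{t+1}) \le \Psi(\ol{\x}_t) - \a\frac1n\sum_i\|\sgp_t^i\|^2 + \frac{L\a^2}{2}\|\ol{\sgp}_t\|^2 + \frac1n\sum_i\langle\sgp_t^i - \y_{t+1}^i, \x_t^i - \ol{\x}_t\rangle + \a\frac1n\sum_i\langle\y_{t+1}^i - \n F(\ol{\x}_t), \sgp_t^i\rangle$.

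The next step is to control the two cross terms so that (i) a net negative multiple of $\frac1n\sum_i\|\sgp_t^i\|^2$ survives, which forces the step-size threshold $\a\le\frac{1}{8L}$ and lets me absorb $\frac{L\a^2}{2}\|\ol{\sgp}_t\|^2 \le \frac{L\a^2}{2}\frac1n\sum_i\|\sgp_t^i\|^2$ by Jensen, and (ii) the leftover reduces to exactly the three error quantities in the statement. The decisive tool is the gradient-tracking identity $\ol{\y}_{t+1} = \ol{\mb{v}}_t$ from~\eqref{y_track}, which lets me write the gradient mismatch $\y_{t+1}^i - \n F(\ol{\x}_t)$ as a telescoping of $\y_{t+1}^i - \ol{\y}_{t+1}$ (the $\y$-consensus error), $\ol{\mb{v}}_t - \ol{\nf}(\x_t)$ (the mean estimation error), and $\ol{\nf}(\x_t) - \n F(\ol{\x}_t)$, where the last is bounded by $L$ times the $\x$-consensus error through $L$-smoothness of each $f_i$. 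Applying Young's inequality to each inner product then splits the products into the squared quantities $\|\x_t - \J\x_t\|^2$, $\|\y_{t+1} - \J\y_{t+1}\|^2$, and $\|\ol{\mb{v}}_t - \ol{\nf}(\x_t)\|^2$, with the $\x$-consensus contribution carrying the $\frac1{\a^2}$ factor.

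To bring the true gradient mapping $\gp(\x_t^i)$ onto the left side, I would use non-expansiveness of the prox (Lemma~\ref{lem_prox_nonexp}): since $\gp(\x_t^i)$ and $\sgp_t^i$ arise from the same prox at $\x_t^i$ differing only in $\n F(\x_t^i)$ versus $\y_{t+1}^i$, we get $\|\gp(\x_t^i) - \sgp_t^i\| \le \|\n F(\x_t^i) - \y_{t+1}^i\|$, hence $\|\gp(\x_t^i)\|^2 \le 2\|\sgp_t^i\|^2 + 2\|\n F(\x_t^i) - \y_{t+1}^i\|^2$. Bounding $\|\n F(\x_t^i) - \y_{t+1}^i\|^2$ by the same three error quantities (again via $\ol{\y}_{t+1}=\ol{\mb{v}}_t$ and $L$-smoothness) expresses $\frac1n\sum_i\|\gp(\x_t^i)\|^2$ through $\frac1n\sum_i\|\sgp_t^i\|^2$ plus errors. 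Finally I would sum the one-step bound over $t=1,\dots,T$, telescope $\Psi(\ol{\x}_1) - \Psi(\ol{\x}_{T+1}) \le \Delta$ using Assumption~\ref{asp_f}(c), add the consensus metric term $\frac{L^2}{n}\|\x_t - \J\x_t\|^2$ to both sides, and rearrange so that $\frac1n\sum_t\sum_i\|\sgp_t^i\|^2$ moves to the right with the displayed negative sign; collecting universal constants produces the factors $8$, $76$, $6$, $10$, and the shifted range $\sum_{t=2}^{T+1}$ on the $\y$-consensus term is inherited from the appearance of $\y_{t+1}$.

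The main obstacle I anticipate is the treatment of the averaged proximal map in the first step: because $\ol{\z}_{t+1}$ is an average of distinct per-node prox evaluations rather than a single prox, the classical prox-gradient descent argument does not apply verbatim, and one must combine Jensen's inequality with the per-node subgradient inequality and then show that the resulting cross terms collapse—precisely through the tracking identity $\ol{\y}_{t+1}=\ol{\mb{v}}_t$—into consensus and estimation errors \emph{without leaving any uncontrolled subgradient-norm terms}. Ensuring simultaneously that the coefficient of $\frac1n\sum_i\|\sgp_t^i\|^2$ stays net negative is the quantitatively delicate part and is what pins down the step-size condition $\a\le\frac{1}{8L}$.
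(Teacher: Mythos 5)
Your proposal is correct and follows essentially the same route as the paper's proof: smoothness descent for $F$ on the averaged recursion $\ol{\x}_{t+1}=\ol{\x}_t-\a\ol{\sgp}_t$, Jensen plus the prox-optimality subgradient inequality (with $\sgp_t^i-\y_{t+1}^i\in\partial h(\z_{t+1}^i)$ and anchor $\ol{\x}_t$) for $h$, the tracking identity $\ol{\y}_{t+1}=\ol{\mb{v}}_t$ with Young's inequality to split the gradient mismatch into the three error quantities, prox non-expansiveness to pass from $\sgp_t^i$ to $\gp(\x_t^i)$, and telescoping with $\Delta$. The only cosmetic difference is that the paper converts the prox cross term via a three-point identity into squared norms before averaging, whereas you keep it in inner-product form and apply Young's inequality afterwards; both yield the stated bound up to the choice of universal constants.
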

\begin{proof}
See Appendix~\ref{app_proof_descent_lem}.
\end{proof}
In light of Lemma~\ref{lem_descent}, our analysis approach is to show that the accumulated descent effect of the stochastic gradient mappings $\sum_{i=1}^{n}\|\sgp_t^i\|$ dominates the accumulated consensus, variance, and gradient tracking errors up to constant factors. To this aim, we establish useful error bounds for different algorithms. The following one is a consequence of the non-expansiveness of the proximal operator.

\begin{lem}[\textbf{Consensus}]\label{lem_consensus}
Consider the iterates generated by Algorithm~\ref{PGT}. We have:~$\forall t\geq1$,
\begin{align*}
\sum_{t=1}^T\big\|\x_{t}-\J\x_{t}\big\|^2
\leq \frac{4\lambda^2\alpha^2}{(1-\lambda^2)^2}\sum_{t=2}^{T}\big\|\y_{t} - \J\y_{t}\big\|^2.
\end{align*}
\end{lem}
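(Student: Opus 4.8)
The plan is to derive a scalar recursion for the consensus error $c_t := \|\x_t - \J\x_t\|$ driven by the gradient-tracker consensus error $d_t := \|\y_t - \J\y_t\|$, and then sum it using the accumulation formula of Lemma~\ref{lem_acc}. The squared form of the target inequality signals that the recursion should be set up and summed at the level of squared norms.

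First I would exploit the $\x$-update $\x_{t+1} = \W^K\z_{t+1}$ of Algorithm~\ref{PGT}. Since $\J\W^K = \J$ by Lemma~\ref{lem_weight}\eqref{W_equal}, we have $\J\x_{t+1} = \J\z_{t+1}$, so Lemma~\ref{lem_weight}\eqref{W_contract} yields the one-step contraction $\|\x_{t+1} - \J\x_{t+1}\| = \|\W^K\z_{t+1} - \J\z_{t+1}\| \leq \lambda\|\z_{t+1} - \J\z_{t+1}\|$. The remaining task is to control $\|\z_{t+1} - \J\z_{t+1}\|$, which is where the proximal mapping must be handled with care; this is the main obstacle, since $\p_{\alpha h}$ is nonlinear and does not commute with the averaging operator $\J$.

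To overcome this I would combine the non-expansiveness of the proximal operator (Lemma~\ref{lem_prox_nonexp}) with the elementary fact that $\ol{\z}_{t+1}$ minimizes $\c\mapsto\sum_i\|\z_{t+1}^i - \c\|^2$, so that $\|\z_{t+1} - \J\z_{t+1}\|^2 \leq \sum_i\|\z_{t+1}^i - \c\|^2$ for \emph{any} anchor $\c$. Choosing $\c := \p_{\alpha h}(\ol{\x}_t - \alpha\ol{\y}_{t+1})$ and applying non-expansiveness block-by-block to $\z_{t+1}^i = \p_{\alpha h}(\x_t^i - \alpha\y_{t+1}^i)$ gives $\|\z_{t+1} - \J\z_{t+1}\| \leq \|(\x_t - \J\x_t) - \alpha(\y_{t+1} - \J\y_{t+1})\| \leq c_t + \alpha d_{t+1}$. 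Together with the contraction above this produces $c_{t+1} \leq \lambda(c_t + \alpha d_{t+1})$, with $c_1 = 0$ because the initialization $\x_1 = \mb{1}_n\otimes\ol{\x}_1$ is consensual.

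Finally, to match the squared-norm form of the claim, I would square this recursion and split it via Young's inequality $(a+b)^2\leq(1+\eta)a^2 + (1+\eta^{-1})b^2$ with the tuned parameter $\eta = \frac{1-\lambda^2}{2\lambda^2}$, which makes the contraction factor exactly $\lambda^2(1+\eta) = \frac{1+\lambda^2}{2} < 1$ and the forcing coefficient $\lambda^2(1+\eta^{-1})\alpha^2 = \frac{\lambda^2(1+\lambda^2)}{1-\lambda^2}\alpha^2$. Applying Lemma~\ref{lem_acc} with $a_t = c_t^2$, $q = \frac{1+\lambda^2}{2}$ (so $\frac{1}{1-q} = \frac{2}{1-\lambda^2}$) and using $c_1 = 0$ yields $\sum_{t=1}^T c_t^2 \leq \frac{2\lambda^2(1+\lambda^2)\alpha^2}{(1-\lambda^2)^2}\sum_{t=2}^T d_t^2$; the stated constant then follows from $1+\lambda^2\leq 2$. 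Beyond the proximal anchoring argument, the only delicate point is choosing $\eta$ so that the accumulated constant collapses exactly to $\frac{4\lambda^2\alpha^2}{(1-\lambda^2)^2}$.
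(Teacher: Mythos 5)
Your proposal is correct and follows essentially the same route as the paper's proof: both arrive at the identical one-step inequality $\|\x_{t+1}-\J\x_{t+1}\|^2\leq\lambda^2\|(\x_t-\J\x_t)-\alpha(\y_{t+1}-\J\y_{t+1})\|^2$ by comparing each $\z_{t+1}^i=\p_{\alpha h}(\x_t^i-\alpha\y_{t+1}^i)$ against the prox of the consensual point $\ol{\x}_t-\alpha\ol{\y}_{t+1}$ via Lemma~\ref{lem_prox_nonexp}, and then apply Young's inequality with the same $\eta=\frac{1-\lambda^2}{2\lambda^2}$ and Lemma~\ref{lem_acc}, using the consensual initialization $\x_1=\mb{1}_n\otimes\ol{\x}_1$ to drop the initial term and $1+\lambda^2\leq2$ to get the stated constant. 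The only (valid) mechanical difference is how the anchor enters: you apply the contraction of Lemma~\ref{lem_weight}\eqref{W_contract} first and justify the anchoring by the variational fact that the mean $\ol{\z}_{t+1}$ minimizes $\mb{c}\mapsto\sum_{i}\|\z_{t+1}^i-\mb{c}\|^2$, whereas the paper inserts the anchor inside $(\W^K-\J)$ — using that this operator annihilates the block-prox of any consensual vector — and then invokes the spectral bound of Lemma~\ref{lem_weight}\eqref{W_spectral}.
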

\begin{proof}
See Appendix~\ref{app_proof_consensus_lem}.
\end{proof}
\begin{rmk}
It is worth noting that Lemma~\ref{lem_descent} and~\ref{lem_consensus} do not use any properties of the gradient estimator~$\mb{v}_t$. 
Therefore they may be of independent interest and used in other decentralized stochastic proximal gradient type methods for non-convex composite problems.
\end{rmk}
The next lemma establishes variance bounds for different algorithms.
\begin{lem}[\textbf{Variance}]\label{lem_VR}
The following statements hold.
\begin{enumerate}[(a)]
\item Let Assumption~\ref{asp_bvr} hold and consider the iterates generated by Algorithm~\ref{PGT_SA}. Then we have:~$\forall t\geq1$,
\begin{align*}
\E\Big[\big\|\ol{\mb{v}}_t - \ol{\nf}(\x_t)\big\|^2\Big] \leq \frac{\nu^2}{nb}.  
\end{align*}
\label{lem_VR_M}
\item Let Assumption~\ref{asp_bvr} and~\ref{asp_mss} hold. Consider the iterates generated by Algorithm~\ref{PGT_SR_O}.
Suppose that $T = Rq$ for some $R\in\mathbb{Z}^{+}$.
Then we have:~$\forall T\geq q$,
\begin{align*}
\sum_{t=1}^T\E\Big[\big\|\ol{\mb{v}}_t - \ol{\nf}(\x_t)\big\|^2\Big]
\leq
\frac{6L^2q}{n^2b}\sum_{t=1}^T\E\Big[\big\|\mb{x}_t - \J\mb{x}_{t}\big\|^2\Big]
+ \frac{qL^2\a^2}{nb}\sum_{t=1}^{T-1}\E\Big[\big\|\ol{\sgp}_{t}\big\|^2\Big]
+ \frac{T\nu^2}{nB}.
\end{align*}
\label{lem_VR_O}
\item Let Assumption~\ref{asp_mss} hold. Consider the iterates generated by Algorithm~\ref{PGT_SR_E}. 
Suppose that $T = Rq$ for some $R\in\mathbb{Z}^{+}$.
Then we have:~$\forall T\geq q$,
\begin{align*}
\sum_{t=1}^T\E\Big[\big\|\ol{\mb{v}}_t - \ol{\nf}(\x_t)\big\|^2\Big]
\leq
\frac{6L^2q}{n^2b}\sum_{t=1}^T\E\Big[\big\|\mb{x}_t - \J\mb{x}_{t}\big\|^2\Big]
+ \frac{qL^2\a^2}{nb}\sum_{t=1}^{T-1}\E\Big[\big\|\ol{\sgp}_{t}\big\|^2\Big].
\end{align*}
\label{lem_VR_E}
\end{enumerate}
\end{lem}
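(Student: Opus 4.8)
The plan is to control, for each part, the averaged deviation $\mb{e}_t := \ol{\mb{v}}_t - \ol{\nf}(\x_t)$ by exploiting the conditional structure induced by the filtration $\F_t$: the iterates $\x_t^i$ and $\x_{t-1}^i$, as well as $\mb{v}_{t-1}^i$, are all $\F_t$-measurable, while the fresh samples $\X_{i,s}^t$ drawn at iteration $t$ are independent of $\F_t$. For part (a), I would expand $\mb{e}_t = \frac1n\sum_{i=1}^n\big(\frac1b\sum_{s=1}^b\n G_i(\x_t^i,\X_{i,s}^t) - \n f_i(\x_t^i)\big)$ and condition on $\F_t$. By Assumption~\ref{asp_unbias} each summand is conditionally mean-zero, and since the samples are independent across both the node index $i$ and the minibatch index $s$, all cross terms vanish; the conditional second moment therefore splits as $\frac1{n^2}\sum_i \frac1b\,\E[\|\n G_i(\x_t^i,\X_{i,s}^t)-\n f_i(\x_t^i)\|^2\mid\F_t]$, each summand bounded by $\nu_i^2/b$ via Assumption~\ref{asp_bvr}. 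Using $\nu^2=\frac1n\sum_i\nu_i^2$ and taking total expectation gives the bound $\nu^2/(nb)$.

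Part (b) is the crux and rests on the SARAH martingale structure. Within an epoch, i.e.\ for $t$ with $t\bmod q\neq1$, averaging the recursive update yields $\mb{e}_t = \mb{e}_{t-1} + \frac1n\sum_{i=1}^n\bds\delta_t^i$, where $\bds\delta_t^i := \frac1b\sum_{s=1}^b\big(\n G_i(\x_t^i,\X_{i,s}^t)-\n G_i(\x_{t-1}^i,\X_{i,s}^t)\big) - \big(\n f_i(\x_t^i)-\n f_i(\x_{t-1}^i)\big)$. By Assumption~\ref{asp_unbias}, $\E[\bds\delta_t^i\mid\F_t]=\mb{0}$, so combining $\mb{e}_{t-1}\in\F_t$ with independence across nodes gives the orthogonal decomposition $\E[\|\mb{e}_t\|^2\mid\F_t] = \|\mb{e}_{t-1}\|^2 + \frac1{n^2}\sum_{i=1}^n\E[\|\bds\delta_t^i\|^2\mid\F_t]$. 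I would then bound each increment by averaging over its $b$ i.i.d.\ samples, dropping the mean via $\E\|X-\E X\|^2\le\E\|X\|^2$, and applying the conditional form of the mean-squared smoothness Assumption~\ref{asp_mss} (legitimate since $\x_t^i,\x_{t-1}^i\in\F_t$), obtaining $\frac1{n^2}\sum_i\E[\|\bds\delta_t^i\|^2\mid\F_t]\le \frac{L^2}{n^2b}\|\x_t-\x_{t-1}\|^2$.

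The remaining step is the epoch-wise summation. Unrolling the martingale recursion from any $t$ back to the start $t_0$ of its epoch gives $\E[\|\mb{e}_t\|^2]\le\E[\|\mb{e}_{t_0}\|^2]+\frac{L^2}{n^2b}\sum_{\tau=t_0+1}^{t}\E[\|\x_\tau-\x_{\tau-1}\|^2]$; at the epoch start the estimator is a fresh size-$B$ minibatch, so the part-(a) argument gives $\E[\|\mb{e}_{t_0}\|^2]\le\nu^2/(nB)$. Summing over $t=1,\dots,T$ with $T=Rq$, the $R$ epoch-start terms contribute $T\nu^2/(nB)$, while the double sum over each length-$q$ epoch telescopes so that each increment $\E[\|\x_\tau-\x_{\tau-1}\|^2]$ is counted at most $q$ times, producing the factor $q$. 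Substituting Lemma~\ref{lem_x_diff} to replace $\sum_\tau\E[\|\x_\tau-\x_{\tau-1}\|^2]$ by a consensus term and a gradient-mapping term then yields the two claimed error terms, with coefficients of order $\tfrac{L^2q}{n^2b}$ and $\tfrac{L^2q\a^2}{nb}$ respectively.

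Finally, part (c) reuses the entire argument of part (b) verbatim except at the epoch start, where the estimator equals the exact local gradient $\n f_i(\x_{t_0}^i)$, forcing $\mb{e}_{t_0}=\mb{0}$ and hence removing the $T\nu^2/(nB)$ term. The main obstacle I anticipate is the probabilistic bookkeeping: justifying that the cross terms vanish requires the joint conditional independence of the minibatch samples across nodes given $\F_t$, and applying Assumption~\ref{asp_mss} in its conditional form requires that $\x_t^i,\x_{t-1}^i$ be frozen under the conditioning. Keeping the martingale telescoping consistent with the periodic epoch restarts, and correctly accounting for the multiplicity of each displacement term across iterations within an epoch, is the other delicate point.
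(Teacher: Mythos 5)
Your proposal is correct and follows essentially the same route as the paper's proof: part (a) via conditional unbiasedness, independence across nodes/samples, and the bounded-variance assumption; parts (b) and (c) via the SARAH martingale decomposition $\E[\|\mb{e}_t\|^2|\F_t] = \|\mb{e}_{t-1}\|^2 + \frac{1}{n^2}\sum_i\E[\|\bds\delta_t^i\|^2|\F_t]$, the mean-squared-smoothness bound $\frac{L^2}{n^2b}\|\x_t-\x_{t-1}\|^2$ on the increments, epoch-wise unrolling with multiplicity at most $q$, substitution of Lemma~\ref{lem_x_diff}, and specializing the epoch-start term to $\nu^2/(nB)$ (fresh size-$B$ minibatch) or $0$ (exact gradient). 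This is precisely the paper's argument (its Lemma~\ref{lem_VR_SARAH} plus the two one-line specializations), so no gaps to report.
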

\begin{proof}
See Appendix~\ref{app_proof_VR_lem}.
\end{proof}

Finally, we give tracking error bounds for different algorithms in the following lemma. 
\begin{lem}[\textbf{Tracking}]\label{lem_GT}
The following statements hold.
\begin{enumerate}[(a)]
\item Let Assumption~\ref{asp_bvr} hold and consider the iterates generated by Algorithm~\ref{PGT_SA}. Then we have:~$\forall T\geq2$,
\begin{align*}
\sum_{t=2}^{T+1}\E\big[\|\y_{t} -\mb{J}\y_{t}\|^2\big]
\leq&~\frac{2\lambda^2n\zeta^2}{1-\lambda^2}
+ \frac{12\lambda^2n\a^2 L^2}{(1-\lambda^2)^2}\sum_{t=1}^{T-1}\E\big[\|\ol{\sgp}_t\|^2\big] 
+\frac{24\lambda^2 L^2}{(1-\lambda^2)^2}\sum_{t=1}^T\E\big[\|\x_t - \J\x_{t}\|^2\big] \nonumber\\
&+ \frac{4T(2\lambda^2n+1)\nu^2}{b(1-\lambda^2)}.
\end{align*}
\label{lem_GT_M}
\item Let Assumption~\ref{asp_bvr} and~\ref{asp_mss} hold. Consider the iterates generated by Algorithm~\ref{PGT_SR_O}. Let $T = Rq$ for some $R\in\mathbb{Z}^{+}$ and $R\geq 2$. Then we have:
\begin{align*}
\sum_{t=2}^{T+1}\E\big[\|\y_{t} -\mb{J}\y_{t}\|^2\big]
\leq&~\frac{2\lambda^2n\zeta^2}{1-\lambda^2}
+ \frac{96\lambda^2L^2}{(1-\lambda^2)^2}\sum_{t=1}^{T}\E\big[\|\x_t - \J\x_{t}\|^2\big]
+ \frac{48\lambda^2n\a^2L^2}{(1-\lambda^2)^2}\sum_{t=1}^{T-1}\E\big[\|\ol{\sgp}_{t}\|^2\big]
\nonumber\\
&+ \frac{14\lambda^2Tn\nu^2}{(1-\lambda^2)^2Bq}.
\end{align*}
\label{lem_GT_O}
\item Let Assumption~\ref{asp_mss} hold. Consider the iterates generated by Algorithm~\ref{PGT_SR_E}. Let $T = Rq$ for some $R\in\mathbb{Z}^{+}$ and $R\geq 2$. Then we have:
\begin{align*}
\sum_{t=2}^{T+1}\E\big[\|\y_{t} -\mb{J}\y_{t}\|^2\big] 
\leq&~\frac{2\lambda^2n\zeta^2}{1-\lambda^2}
+ \frac{96\lambda^2L^2}{(1-\lambda^2)^2}\sum_{t=1}^{T}\E\big[\|\x_t - \J\x_{t}\|^2\big]
+ \frac{48\lambda^2n\a^2L^2}{(1-\lambda^2)^2}\sum_{t=1}^{T-1}\E\big[\|\ol{\sgp}_{t}\|^2\big].
\end{align*}
\label{lem_GT_E}
\end{enumerate}
\end{lem}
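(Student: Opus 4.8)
The plan is to establish the tracking error recursion by unrolling the $\y$-update and controlling each resulting term, treating all three cases in parallel since they differ only in the variance of the gradient estimator. First I would use the key property that $\J\W^K = \J$ together with the $\y$-update $\y_{t+1} = \W^K(\y_t + \v_t - \v_{t-1})$ to derive a one-step contraction for the consensus component $\|\y_{t+1} - \J\y_{t+1}\|$. Concretely, subtracting $\J\y_{t+1} = \J(\y_t + \v_t - \v_{t-1})$ and applying Lemma~\ref{lem_weight}\eqref{W_contract} gives $\|\y_{t+1} - \J\y_{t+1}\| \leq \lambda\|\y_t + \v_t - \v_{t-1} - \J(\y_t + \v_t - \v_{t-1})\|$, and by the triangle inequality and $\|\I - \J\|\leq 1$ this is bounded by $\lambda\|\y_t - \J\y_t\| + \lambda\|(\I-\J)(\v_t - \v_{t-1})\|$. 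Squaring and using Young's inequality in the form $\|a+b\|^2 \leq (1+\gamma)\|a\|^2 + (1+\gamma^{-1})\|b\|^2$ with $\gamma \asymp (1-\lambda^2)/\lambda^2$ converts this into a contraction $\|\y_{t+1}-\J\y_{t+1}\|^2 \leq \tfrac{1+\lambda^2}{2}\|\y_t - \J\y_t\|^2 + \tfrac{c\lambda^2}{1-\lambda^2}\|(\I-\J)(\v_t-\v_{t-1})\|^2$ for a universal constant $c$.

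Next I would apply the accumulation formula Lemma~\ref{lem_acc} with contraction factor $q = \tfrac{1+\lambda^2}{2}$, so that $\tfrac{1}{1-q} = \tfrac{2}{1-\lambda^2}$, yielding a bound on $\sum_{t=2}^{T+1}\|\y_t - \J\y_t\|^2$ by an initial term plus $\tfrac{c\lambda^2}{(1-\lambda^2)^2}\sum_t \|(\I-\J)(\v_t - \v_{t-1})\|^2$. The initial term is handled using $\y_1 = \mb 0$, $\v_0 = \mb 0$, and the definition of $\zeta^2$, producing the leading $\tfrac{2\lambda^2 n\zeta^2}{1-\lambda^2}$ factor. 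The crux is then to bound $\sum_t \E\|(\I-\J)(\v_t - \v_{t-1})\|^2 \leq \sum_t \E\|\v_t - \v_{t-1}\|^2$. I would split each difference as $\v_t - \v_{t-1} = (\v_t - \nf(\x_t)) - (\v_{t-1} - \nf(\x_{t-1})) + (\nf(\x_t) - \nf(\x_{t-1}))$ and control the smooth part via $L$-smoothness, giving $\|\nf(\x_t) - \nf(\x_{t-1})\|^2 \leq L^2\|\x_t - \x_{t-1}\|^2$, to which I apply Lemma~\ref{lem_x_diff} to produce the $\sum\|\x_t - \J\x_t\|^2$ and $\sum\|\ol{\sgp}_t\|^2$ terms. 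The variance-difference part is where the three cases diverge: for \texttt{ProxGT-SA} the per-step variance is bounded crudely by $\nu^2/b$ terms (each $\v$ fresh), while for the SARAH-type estimators in \texttt{ProxGT-SR-O} and \texttt{ProxGT-SR-E} I would invoke the martingale structure already distilled in Lemma~\ref{lem_VR}, reusing its telescoping-over-epochs bound to express the variance accumulation in terms of the same consensus and gradient-mapping sums.

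The main obstacle I anticipate is the careful bookkeeping of constants and the epoch structure for the recursive estimators. In the SARAH cases the difference $\v_t - \v_{t-1}$ behaves differently at the epoch boundaries (where $t \bmod q = 1$ and a full restart occurs) versus within an epoch, so the sum $\sum_t\E\|\v_t - \v_{t-1}\|^2$ must be decomposed accordingly; at a restart step $\v_t$ is a fresh large-batch estimate with no relation to $\v_{t-1}$, and I would absorb these $R = T/q$ boundary contributions into either the $\zeta^2$ initial term or the residual variance term $\tfrac{14\lambda^2 Tn\nu^2}{(1-\lambda^2)^2 Bq}$ that appears only in case~\eqref{lem_GT_O}. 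The reason case~\eqref{lem_GT_E} carries no residual variance term is that the empirical-risk restart uses the exact gradient $\v_t^i = \nf_i(\x_t^i)$, so the variance-difference contribution vanishes entirely there. Throughout I would keep the $\lambda^2/(1-\lambda^2)^2$ prefactor consistent with the one-step contraction derived above, and the final step is simply to collect the three groups of terms — initial $\zeta^2$, consensus $\sum\|\x_t - \J\x_t\|^2$, gradient-mapping $\sum\|\ol{\sgp}_t\|^2$, and (for the online cases) residual variance — matching the stated coefficients.
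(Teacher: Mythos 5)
There is a genuine gap, and it is concentrated in part (a). Your one-step contraction disposes of the cross term $2\langle \W^K\y_t - \J\y_t,\,(\W^K - \J)(\mb{v}_t - \mb{v}_{t-1})\rangle$ by Cauchy--Schwarz and Young with $\gamma \asymp (1-\lambda^2)/\lambda^2$, so every contribution of $\mb{v}_t - \mb{v}_{t-1}$, including its pure noise part, is multiplied by roughly $\lambda^2/(1-\lambda^2)$ before Lemma~\ref{lem_acc} multiplies it by another $2/(1-\lambda^2)$. Since for Algorithm~\ref{PGT_SA} one only has $\E[\|\mb{v}_t - \nf(\x_t)\|^2] \le n\nu^2/b$ (the stacked vector sums over the $n$ nodes, so it is $n\nu^2/b$, not the $\nu^2/b$ you write), your route ends with a variance term of order $\lambda^2 T n \nu^2 /\big(b(1-\lambda^2)^2\big)$, which exceeds the claimed $4T(2\lambda^2 n + 1)\nu^2/\big(b(1-\lambda^2)\big)$ by a factor of order $1/(1-\lambda^2)$; part (a) as stated is therefore out of reach of your argument. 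The paper's proof never applies Young to the noise: starting from the exact identity~\eqref{GT_0}, it conditions on $\F_t$ so that Assumption~\ref{asp_unbias} replaces $\mb{v}_t$ by $\nf(\x_t)$ in the cross term, applies Young only to the smooth difference $\nf(\x_t) - \nf(\x_{t-1})$, and keeps the residual noise cross term $C_t = \langle \W^K \y_t,\,(\W^K - \J)(\nf(\x_{t-1}) - \mb{v}_{t-1})\rangle$, which it bounds by $\E[C_t] \le \nu^2/b$ via a second conditioning on $\F_{t-1}$, the independence of the gradient noise across nodes, and the entrywise nonnegativity of $\W$ --- with no $(1-\lambda^2)^{-1}$ amplification. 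That conditional-expectation/cross-node-independence treatment of the cross term is the idea your proposal is missing.

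For parts (b) and (c), the Young-based contraction is in fact what the paper itself uses inside epochs, but two details of your plan would still break the stated constants. First, inside an epoch you must bound $\E[\|\mb{v}_t - \mb{v}_{t-1}\|^2] \le L^2 \E[\|\x_t - \x_{t-1}\|^2]$ directly from mean-squared smoothness (there $\mb{v}_t - \mb{v}_{t-1}$ is itself the minibatch of gradient differences); if you instead run your generic three-term decomposition through $\mb{v}_t - \nf(\x_t)$, the accumulated SARAH variance appears at every one of the $q$ steps of each epoch, and you pick up extra factors of $q$ on the restart variance and $q/b$ on the consensus term, incompatible with the stated coefficients (under the theorems' parameter choices $q/b = n$). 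Second, you cannot literally invoke Lemma~\ref{lem_VR} for this purpose: that lemma controls the averaged error $\|\ol{\mb{v}}_t - \ol{\nf}(\x_t)\|^2$, whereas the tracking recursion needs the stacked error $\|\mb{v}_t - \nf(\x_t)\|^2$, and Jensen only gives $\|\ol{\mb{v}}_t - \ol{\nf}(\x_t)\|^2 \le \tfrac{1}{n}\|\mb{v}_t - \nf(\x_t)\|^2$, i.e., the wrong direction; the paper re-derives the epoch-telescoping bound for the stacked quantity, with $L^2/b$ and $n\nu^2/B$ in place of $L^2/(n^2 b)$ and $\nu^2/(nB)$. Relatedly, the restart contributions at $t = zq+1$ with $z \ge 1$ cannot be absorbed into the $\zeta^2$ term, which arises solely from the initialization $\y_2 = (\W^K - \J)\mb{v}_1$; they must all go into the $Tn\nu^2/(Bq)$ term in case (b), and they vanish in case (c).
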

\begin{proof}
See Appendix~\ref{app_proof_GT_lem}.
\end{proof}

\subsection{Proofs of the Main Theorems}
We first use the consensus error bound in Lemma~\ref{lem_consensus} to refine the descent inequality in Lemma~\ref{lem_descent}.
\begin{prop}\label{PGT_main0}
Consider the iterates generated by Algorithm~\ref{PGT}. If~$0<\alpha\leq \frac{1}{8L}$, then we have:~$\forall t\geq1$, 
\begin{align*}
\frac{1}{n}\sum_{t=1}^T\left(\sum_{i=1}^n\left\|\gp(\x_t^i)\right\|^2
+ L^2\left\|\x_{t} - \J\x_{t}\right\|^2\right) 
\leq&~\frac{8\Delta}{\a}
- \sum_{t=1}^T\left\|\ol{\sgp}_{t}\right\|^2
+ 76\sum_{t=1}^T\left\|\ol{\mb{v}}_{t} - \ol{\nf}(\x_t)\right\|^2
\nonumber\\
&+ \frac{34}{(1-\lambda^2)^2n}\sum_{t=2}^{T+1}\|\y_{t} - \J\y_{t}\|^2.
\end{align*}
\end{prop}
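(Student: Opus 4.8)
The plan is to combine the two earlier lemmas---the descent inequality (Lemma~\ref{lem_descent}) and the consensus error bound (Lemma~\ref{lem_consensus})---into a single refined inequality. The key observation is that Lemma~\ref{lem_descent} contains two terms whose combined treatment is awkward: a term $\frac{6}{\alpha^2 n}\sum_t \|\x_t - \J\x_t\|^2$ carrying an inverse step-size factor, and a leftover gradient-tracking term $\frac{10}{n}\sum_t \|\y_t - \J\y_t\|^2$. The consensus bound lets me trade the former for the latter, since it controls $\sum_t \|\x_t - \J\x_t\|^2$ directly by $\sum_t \|\y_t - \J\y_t\|^2$. So the first step is simply to substitute the bound from Lemma~\ref{lem_consensus} into the $\frac{6}{\alpha^2 n}\sum_t \|\x_t - \J\x_t\|^2$ term of Lemma~\ref{lem_descent}.

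Carrying out that substitution, the offending term becomes
\begin{align*}
\frac{6}{\alpha^2 n}\sum_{t=1}^T \|\x_t - \J\x_t\|^2
\leq \frac{6}{\alpha^2 n}\cdot\frac{4\lambda^2\alpha^2}{(1-\lambda^2)^2}\sum_{t=2}^{T}\|\y_t - \J\y_t\|^2
= \frac{24\lambda^2}{(1-\lambda^2)^2 n}\sum_{t=2}^{T}\|\y_t - \J\y_t\|^2.
\end{align*}
The crucial cancellation is that the $\alpha^2$ in the denominator of the descent term meets the $\alpha^2$ in the numerator of the consensus bound and disappears, leaving a clean $1/(1-\lambda^2)^2$ factor free of the step-size. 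Next I would merge this with the existing $\frac{10}{n}\sum_{t=2}^{T+1}\|\y_t - \J\y_t\|^2$ term from Lemma~\ref{lem_descent}. Since $\lambda^2 \leq 1$ and $(1-\lambda^2)^2 \leq 1$, I can absorb the lone coefficient $10$ into the same denominator by writing $10 \leq 10/(1-\lambda^2)^2$ (and noting $24\lambda^2 \leq 24$), so the two contributions combine to at most $\frac{34}{(1-\lambda^2)^2 n}\sum_{t=2}^{T+1}\|\y_t - \J\y_t\|^2$; the index ranges match up to the extra $t=T+1$ term, which is harmless since all summands are nonnegative. The other step is cosmetic: the descent term $-\frac{1}{n}\sum_t\sum_i \|\sgp_t^i\|^2$ should be replaced by $-\sum_t \|\ol{\sgp}_t\|^2$, which follows from Jensen's inequality, $\|\ol{\sgp}_t\|^2 = \|\frac{1}{n}\sum_i \sgp_t^i\|^2 \leq \frac{1}{n}\sum_i \|\sgp_t^i\|^2$, so $-\frac{1}{n}\sum_i\|\sgp_t^i\|^2 \leq -\|\ol{\sgp}_t\|^2$ summed over $t$.

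I do not anticipate a genuine obstacle here, since both input lemmas are already established and the proposition is essentially an algebraic consolidation. The one place to be careful is the bookkeeping of the coefficient $34$ and the summation indices: I must verify that folding $24\lambda^2/(1-\lambda^2)^2$ and $10$ into a single $34/(1-\lambda^2)^2$ is valid for every $\lambda\in[0,1)$, and that extending the upper summation index from $T$ to $T+1$ only adds a nonnegative quantity so the inequality is preserved in the correct direction. With those checks, the proof is a direct substitution, so I would simply cite Lemma~\ref{lem_descent} and Lemma~\ref{lem_consensus}, perform the plug-in, apply Jensen for the $\ol{\sgp}_t$ term, and collect the gradient-tracking terms under the common denominator.
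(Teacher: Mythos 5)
Your proposal is correct and matches the paper's own proof, which likewise obtains the proposition by substituting Lemma~\ref{lem_consensus} into Lemma~\ref{lem_descent} and invoking $\|\ol{\sgp}_t\|^2 \leq \frac{1}{n}\sum_{i=1}^{n}\|\sgp_t^i\|^2$; your explicit coefficient bookkeeping ($24\lambda^2/(1-\lambda^2)^2 + 10 \leq 34/(1-\lambda^2)^2$ and the harmless extension of the sum to $t=T+1$) is exactly the calculation the paper leaves implicit.
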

\begin{proof}
This result follows by applying Lemma~\ref{lem_consensus} to Lemma~\ref{lem_descent} and $\|\ol{\sgp}_t\|^2 \leq \frac{1}{n}\sum_{i=1}^{n}\|\sgp_t^i\|^2$. 
\end{proof}

\subsubsection{Proof of Theorem \ref{thm_SA}}
We apply Lemma~\ref{lem_consensus} to Lemma~\ref{lem_GT}\eqref{lem_GT_M} to obtain: $\forall T\geq2$,
\begin{align}\label{SA_main_0}
\left(1-\frac{96\lambda^4\alpha^2 L^2}{(1-\lambda^2)^4}\right)\sum_{t=2}^{T+1}\E\big[\|\y_{t} -\mb{J}\y_{t}\|^2\big] 
\leq&~\frac{2\lambda^2n\zeta}{1-\lambda^2}
+ \frac{12\lambda^2n\a^2 L^2}{(1-\lambda^2)^2}\sum_{t=1}^{T-1}\E\big[\|\ol{\sgp}_t\|^2\big]
+ \frac{4T(2\lambda^2n+1)\nu^2}{b(1-\lambda^2)}.
\end{align}
If~$0<\a\leq\frac{(1-\lambda^2)^2}{14\lambda^2 L}$, then
$1-\frac{96\lambda^4\alpha^2 L^2}{(1-\lambda^2)^4}\geq\frac{1}{2}$ and hence~\eqref{SA_main_0} implies that~$\forall T\geq2$,
\begin{align}\label{SA_main_1}
\sum_{t=2}^{T+1}\E\big[\|\y_{t} -\mb{J}\y_{t}\|^2\big] 
\leq&~\frac{4\lambda^2n\zeta^2}{1-\lambda^2}
+ \frac{24\lambda^2n\a^2 L^2}{(1-\lambda^2)^2}\sum_{t=1}^{T-1}\E\big[\|\ol{\sgp}_t\|^2\big]
+ \frac{8T(2\lambda^2n+1)\nu^2}{b(1-\lambda^2)}.
\end{align}
Plugging Lemma~\ref{lem_VR}\eqref{lem_VR_M} and~\eqref{SA_main_1} into Proposition~\ref{PGT_main0} gives: if~$0<\a\leq\min\Big\{\frac{(1-\lambda^2)^2}{14\lambda^2 },\frac{1}{8}\Big\}\frac{1}{L}$, then
\begin{align}\label{SA_main2}
&\frac{1}{n}\sum_{t=1}^T\sum_{i=1}^n\E\Big[\big\|\gp(\x_t^i)\big\|^2
+ L^2\big\|\x_{t}^i - \ol{\x}_{t}\big\|^2\Big] 
\nonumber\\
\leq&~\frac{8\Delta}{\a}
- \sum_{t=1}^T\E\big[\|\ol{\sgp}_{t}\|^2\big]
+ \frac{76T\nu^2}{nb} 
+ \frac{272T(2\lambda^2n+1)\nu^2}{nb(1-\lambda^2)^3}
+ \frac{136\lambda^2\zeta^2}{(1-\lambda^2)^3} 
+ \frac{816\lambda^2\a^2 L^2}{(1-\lambda^2)^4}\sum_{t=1}^{T-1}\E\big[\|\ol{\sgp}_t\|^2\big]
\nonumber\\
\leq&~\frac{8\Delta}{\a}
- \left(1 - \frac{816\lambda^2\a^2 L^2}{(1-\lambda^2)^4}\right)\sum_{t=1}^T\E\big[\|\ol{\sgp}_{t}\|^2\big]
+ \frac{136\lambda^2\zeta^2}{(1-\lambda^2)^3} 
+ \frac{348T\nu^2}{nb(1-\lambda^2)^3} 
+ \frac{544\lambda^2 T\nu^2}{b(1-\lambda^2)^3}.
\end{align}
From~\eqref{SA_main2}, we have: if~$0<\a\leq\min\Big\{\frac{(1-\lambda^2)^2}{30\lambda },\frac{1}{8}\Big\}\frac{1}{L}$, then~$\forall T\geq2$,
\begin{align}\label{SA_main3}
\frac{1}{nT}\sum_{t=1}^T\sum_{i=1}^n\E\Big[\big\|\gp(\x_t^i)\big\|^2
+ L^2\big\|\x_{t}^i - \ol{\x}_{t}\big\|^2\Big]
\leq \frac{8\Delta}{\a T}
+ \frac{136\lambda^2\zeta^2}{(1-\lambda^2)^3 T} 
+ \frac{348\nu^2}{nb(1-\lambda^2)^3} 
+ \frac{544\lambda^2 \nu^2}{b(1-\lambda^2)^3}.
\end{align}
Recall from~\eqref{lambda_K} that~$\lambda := \ltrue^K$ and we set 
\begin{align*}
K \asymp \frac{\log(n\zeta)}{1-\ltrue},
\end{align*}
so that
$\frac{1}{1-\lambda} = \mc{O}(1),
\lambda\zeta = \mc{O}(1),
\lambda n = \mc{O}(1).$
As a consequence, from~\eqref{SA_main3} we have: if~$0<\a\lesssim\frac{1}{L}$, then
\begin{align}\label{SA_main4}
\frac{1}{nT}\sum_{t=1}^T\sum_{i=1}^n\E\Big[\big\|\gp(\x_t^i)\big\|^2
+ L^2\big\|\x_{t}^i - \ol{\x}_{t}\big\|^2\Big]
\lesssim \frac{\Delta}{\a T}
+ \frac{\nu^2}{nb}.
\end{align}
Finally, we observe that choosing
\begin{align*}
\a \asymp \frac{1}{L},
\qquad
b \asymp \frac{\nu^2}{n\epsilon^2},
\qquad
T \asymp \frac{L\Delta}{\epsilon^{2}}
\end{align*}
in~\eqref{SA_main4} gives $\frac{1}{nT}\sum_{t=1}^T\sum_{i=1}^n\E\big[\|\gp(\x_t^i)\|^2
+ L^2\|\x_{t}^i - \ol{\x}_{t}\|^2\big]
\lesssim \epsilon^2.$ The ensuing complexity results follow from the fact that each iteration of Algorithm~\ref{PGT_SA} incurs $b$ stochastic gradient samples and $K$ rounds of communication.

\subsubsection{Proof of Theorem~\ref{thm_SR_O}}
Consider $T = Rq$ for some $R\in\mathbb{Z}^{+}$ and $R\geq2$.
Plugging Lemma~\ref{lem_consensus} to Lemma~\ref{lem_VR}\eqref{lem_VR_O} gives: 
\begin{align*}
\sum_{t=1}^T\E\Big[\big\|\ol{\mb{v}}_t - \ol{\nf}(\x_t)\big\|^2\Big]
\leq
\frac{24\lambda^2\alpha^2L^2q}{(1-\lambda^2)^2n^2b}\sum_{t=2}^{T}\E\Big[\big\|\y_{t} - \J\y_{t}\big\|^2\Big]
+ \frac{qL^2\a^2}{nb}\sum_{t=1}^{T-1}\E\Big[\big\|\ol{\sgp}_{t}\big\|^2\Big]
+ \frac{T\nu^2}{nB}.
\end{align*}
In particular, if $0<\a\leq\sqrt{\frac{nb}{24q}}\frac{1}{L}$, we have: 
\begin{align}\label{SRO_main_0}
\sum_{t=1}^T\E\Big[\big\|\ol{\mb{v}}_t - \ol{\nf}(\x_t)\big\|^2\Big]
\leq
\frac{\lambda^2}{(1-\lambda^2)^2n}\sum_{t=2}^{T}\E\Big[\big\|\y_{t} - \J\y_{t}\big\|^2\Big]
+ \frac{qL^2\a^2}{nb}\sum_{t=1}^{T-1}\E\Big[\big\|\ol{\sgp}_{t}\big\|^2\Big]
+ \frac{T\nu^2}{nB}.
\end{align}
Applying~\eqref{SRO_main_0} to Proposition~\ref{PGT_main0} yields: if $0<\a\leq\min\left\{\frac{1}{8},\sqrt{\frac{nb}{24q}}\right\}\frac{1}{L}$, then 
\begin{align*}
\frac{1}{n}\sum_{t=1}^T\sum_{i=1}^n\E\Big[\big\|\gp(\x_t^i)\big\|^2
+ L^2\big\|\x_{t}^i - \ol{\x}_{t}\big\|^2\Big] 
\leq&~\frac{8\Delta}{\a}
- \left(1-\frac{76qL^2\a^2}{nb}\right)\sum_{t=1}^T\E\Big[\big\|\ol{\sgp}_{t}\big\|^2\Big]
\nonumber\\
&+ \frac{110}{(1-\lambda^2)^2n}\sum_{t=2}^{T+1}\E\Big[\big\|\y_{t} - \J\y_{t}\big\|^2\Big]
+ \frac{76T\nu^2}{nB}.
\end{align*}
In particular, if $0<\a\leq\min\left\{\frac{1}{8},\sqrt{\frac{nb}{152q}}\right\}\frac{1}{L}$, we have: 
\begin{align}\label{SRO_main_1}
\frac{1}{n}\sum_{t=1}^T\sum_{i=1}^n\E\Big[\big\|\gp(\x_t^i)\big\|^2
+ L^2\big\|\x_{t}^i - \ol{\x}_{t}\big\|^2\Big] 
\leq&~\frac{8\Delta}{\a}
- \frac{1}{2}\sum_{t=1}^T\E\Big[\big\|\ol{\sgp}_{t}\big\|^2\Big]
+ \frac{110}{(1-\lambda^2)^2n}\sum_{t=2}^{T+1}\E\Big[\big\|\y_{t} - \J\y_{t}\big\|^2\Big]
+ \frac{76T\nu^2}{nB}.
\end{align}
To proceed, we apply Lemma~\ref{lem_consensus} to Lemma~\ref{lem_GT}\eqref{lem_GT_O} to obtain: 
\begin{align}\label{SRO_main_2}
\left(1-\frac{384\lambda^4\a^2L^2}{(1-\lambda^2)^4}\right)\sum_{t=2}^{T+1}\E\big[\|\y_{t} -\mb{J}\y_{t}\|^2\big]
\leq&~\frac{2\lambda^2n\zeta^2}{1-\lambda^2}
+ \frac{48\lambda^2n\a^2L^2}{(1-\lambda^2)^2}\sum_{t=1}^{T-1}\E\big[\|\ol{\sgp}_{t}\|^2\big]
+ \frac{14\lambda^2Tn\nu^2}{(1-\lambda^2)^2Bq}.
\end{align}
If~$0<\a\leq\frac{(1-\lambda^2)^2}{28\lambda^2 L}$, \eqref{SRO_main_2} implies that
\begin{align}\label{SRO_main_3}
\sum_{t=2}^{T+1}\E\big[\|\y_{t} -\mb{J}\y_{t}\|^2\big]
\leq&~\frac{4\lambda^2n\zeta^2}{1-\lambda^2}
+ \frac{96\lambda^2n\a^2L^2}{(1-\lambda^2)^2}\sum_{t=1}^{T-1}\E\big[\|\ol{\sgp}_{t}\|^2\big]
+ \frac{28\lambda^2Tn\nu^2}{(1-\lambda^2)^2Bq}.
\end{align}
Finally, plugging~\eqref{SRO_main_3} to~\eqref{SRO_main_1}, we obtain: if $0<\a\leq\min\left\{\frac{1}{8},\sqrt{\frac{nb}{152q}},\frac{(1-\lambda^2)^2}{28\lambda^2}\right\}\frac{1}{L}$, then
\begin{align*}
\frac{1}{n}\sum_{t=1}^T\sum_{i=1}^n\E\Big[\big\|\gp(\x_t^i)\big\|^2
+ L^2\big\|\x_{t}^i - \ol{\x}_{t}\big\|^2\Big] 
\leq&~\frac{8\Delta}{\a}
+ \frac{76T\nu^2}{nB} 
+ \frac{440\lambda^2\zeta^2}{(1-\lambda^2)^3}
+ \frac{3080\lambda^2T\nu^2}{(1-\lambda^2)^4Bq}
\nonumber\\
&- \frac{1}{2}\left(1-\frac{21120\lambda^2\a^2L^2}{(1-\lambda^2)^4}\right)\sum_{t=1}^T\E\Big[\big\|\ol{\sgp}_{t}\big\|^2\Big]
.
\end{align*}
Hence, if $0<\a\leq\min\left\{\frac{1}{8},\sqrt{\frac{nb}{152q}},\frac{(1-\lambda^2)^2}{146\lambda^2}\right\}\frac{1}{L}$, then
\begin{align}\label{SRO_main}
\frac{1}{nT}\sum_{t=1}^T\sum_{i=1}^n\E\Big[\big\|\gp(\x_t^i)\big\|^2
+ L^2\big\|\x_{t}^i - \ol{\x}_{t}\big\|^2\Big] 
\leq&~\frac{8\Delta}{\a T}
+ \frac{76\nu^2}{nB} 
+ \frac{440\lambda^2\zeta^2}{(1-\lambda^2)^3T}
+ \frac{3080\lambda^2\nu^2}{(1-\lambda^2)^4Bq}.
\end{align}
Let $\epsilon>0$ be given. Recall from~\eqref{lambda_K} that~$\lambda := \ltrue^K$ and we set 
\begin{align*}
K \asymp \frac{\log(n\zeta)}{1-\ltrue},
\end{align*}
so that
$\frac{1}{1-\lambda} = \mc{O}(1),
\lambda\zeta = \mc{O}(1),
\lambda n = \mc{O}(1);$
moreover, we let
\begin{align*}
q = nb \qquad\text{and}\qquad\a\asymp\frac{1}{L}.
\end{align*}
As a consequence, we have from~\eqref{SRO_main} that
\begin{align}\label{SRO_main_F}
\frac{1}{nT}\sum_{t=1}^T\sum_{i=1}^n\E\Big[\big\|\gp(\x_t^i)\big\|^2
+ L^2\big\|\x_{t}^i - \ol{\x}_{t}\big\|^2\Big] 
\lesssim \frac{L\Delta}{T}
+ \frac{\nu^2}{nB}.
\end{align}
In view of~\eqref{SRO_main_F}, we further choose 
\begin{align}\label{SRO_TB}
T \asymp \frac{L\Delta}{\epsilon^2} + q
\qquad
\text{and}
\qquad
B \asymp \frac{\nu^2}{n\epsilon^2},
\end{align}
which lead to $\frac{1}{nT}\sum_{t=1}^T\sum_{i=1}^n\E\big[\|\gp(\x_t^i)\|^2
+ L^2\|\x_{t}^i - \ol{\x}_{t}\|^2\big]
\lesssim \epsilon^2.$ Since \texttt{ProxGT-SR-O} requires $B$ samples every $q$ iterations and $b$ samples at each iteration, its total sample complexity is bounded by 
\begin{align}\label{SRO_IFO}
\mc{O}\left(T\left(b + \frac{B}{q}\right)\right).
\end{align}
Setting $b = B/q$, together with $q = nb$ stated above, gives
\begin{align}\label{SRO_bq}
b \asymp \sqrt{\frac{B}{n}} = \frac{\nu}{n\epsilon},
\qquad
\text{and}
\qquad
q \asymp \frac{\nu}{\epsilon}.
\end{align}
Applying~\eqref{SRO_TB} and~\eqref{SRO_bq} to~\eqref{SRO_IFO} concludes the ensuing sample complexity and the corresponding communication complexity is given by $TK$.

\subsubsection{Proof of Theorem~\ref{thm_SR_E}}
Consider $T = Rq$ for some $R\in\mathbb{Z}^{+}$ and $R\geq2$.
Plugging Lemma~\ref{lem_consensus} to Lemma~\ref{lem_VR}\eqref{lem_VR_E} gives: 
\begin{align*}
\sum_{t=1}^T\E\Big[\big\|\ol{\mb{v}}_t - \ol{\nf}(\x_t)\big\|^2\Big]
\leq
\frac{24\lambda^2\alpha^2L^2q}{(1-\lambda^2)^2n^2b}\sum_{t=2}^{T}\E\Big[\big\|\y_{t} - \J\y_{t}\big\|^2\Big]
+ \frac{qL^2\a^2}{nb}\sum_{t=1}^{T-1}\E\Big[\big\|\ol{\sgp}_{t}\big\|^2\Big].
\end{align*}
In particular, if $0<\a\leq\sqrt{\frac{nb}{24q}}\frac{1}{L}$, we have: 
\begin{align}\label{SRE_main_0}
\sum_{t=1}^T\E\Big[\big\|\ol{\mb{v}}_t - \ol{\nf}(\x_t)\big\|^2\Big]
\leq
\frac{\lambda^2}{(1-\lambda^2)^2n}\sum_{t=2}^{T}\E\Big[\big\|\y_{t} - \J\y_{t}\big\|^2\Big]
+ \frac{qL^2\a^2}{nb}\sum_{t=1}^{T-1}\E\Big[\big\|\ol{\sgp}_{t}\big\|^2\Big].
\end{align}
Applying~\eqref{SRE_main_0} to Proposition~\ref{PGT_main0} yields: if $0<\a\leq\min\left\{\frac{1}{8},\sqrt{\frac{nb}{24q}}\right\}\frac{1}{L}$, then 
\begin{align*}
\frac{1}{n}\sum_{t=1}^T\sum_{i=1}^n\E\Big[\big\|\gp(\x_t^i)\big\|^2
+ L^2\big\|\x_{t}^i - \ol{\x}_{t}\big\|^2\Big] 
\leq&~\frac{8\Delta}{\a}
- \left(1-\frac{76qL^2\a^2}{nb}\right)\sum_{t=1}^T\E\Big[\big\|\ol{\sgp}_{t}\big\|^2\Big]
\nonumber\\
&+ \frac{110}{(1-\lambda^2)^2n}\sum_{t=2}^{T+1}\E\Big[\big\|\y_{t} - \J\y_{t}\big\|^2\Big].
\end{align*}
In particular, if $0<\a\leq\min\left\{\frac{1}{8},\sqrt{\frac{nb}{152q}}\right\}\frac{1}{L}$, we have: 
\begin{align}\label{SRE_main_1}
\frac{1}{n}\sum_{t=1}^T\sum_{i=1}^n\E\Big[\big\|\gp(\x_t^i)\big\|^2
+ L^2\big\|\x_{t}^i - \ol{\x}_{t}\big\|^2\Big] 
\leq&~\frac{8\Delta}{\a}
- \frac{1}{2}\sum_{t=1}^T\E\Big[\big\|\ol{\sgp}_{t}\big\|^2\Big]
+ \frac{110}{(1-\lambda^2)^2n}\sum_{t=2}^{T+1}\E\Big[\big\|\y_{t} - \J\y_{t}\big\|^2\Big].
\end{align}
To proceed, we apply Lemma~\ref{lem_consensus} to Lemma~\ref{lem_GT}\eqref{lem_GT_O} to obtain: 
\begin{align}\label{SRE_main_2}
\left(1-\frac{384\lambda^4\a^2L^2}{(1-\lambda^2)^4}\right)\sum_{t=2}^{T+1}\E\big[\|\y_{t} -\mb{J}\y_{t}\|^2\big]
\leq&~\frac{2\lambda^2n\zeta^2}{1-\lambda^2}
+ \frac{48\lambda^2n\a^2L^2}{(1-\lambda^2)^2}\sum_{t=1}^{T-1}\E\big[\|\ol{\sgp}_{t}\|^2\big].
\end{align}
If~$0<\a\leq\frac{(1-\lambda^2)^2}{28\lambda^2 L}$, \eqref{SRE_main_2} implies that
\begin{align}\label{SRE_main_3}
\sum_{t=2}^{T+1}\E\big[\|\y_{t} -\mb{J}\y_{t}\|^2\big]
\leq&~\frac{4\lambda^2n\zeta^2}{1-\lambda^2}
+ \frac{96\lambda^2n\a^2L^2}{(1-\lambda^2)^2}\sum_{t=1}^{T-1}\E\big[\|\ol{\sgp}_{t}\|^2\big].
\end{align}
Finally, plugging~\eqref{SRE_main_3} to~\eqref{SRE_main_1}, we obtain: if $0<\a\leq\min\left\{\frac{1}{8},\sqrt{\frac{nb}{152q}},\frac{(1-\lambda^2)^2}{28\lambda^2}\right\}\frac{1}{L}$, then
\begin{align*}
\frac{1}{n}\sum_{t=1}^T\sum_{i=1}^n\E\Big[\big\|\gp(\x_t^i)\big\|^2
+ L^2\big\|\x_{t}^i - \ol{\x}_{t}\big\|^2\Big] 
\leq&~\frac{8\Delta}{\a}
+ \frac{440\lambda^2\zeta^2}{(1-\lambda^2)^3}
- \frac{1}{2}\left(1-\frac{21120\lambda^2\a^2L^2}{(1-\lambda^2)^4}\right)\sum_{t=1}^T\E\Big[\big\|\ol{\sgp}_{t}\big\|^2\Big]
.
\end{align*}
Hence, if $0<\a\leq\min\left\{\frac{1}{8},\sqrt{\frac{nb}{152q}},\frac{(1-\lambda^2)^2}{146\lambda^2}\right\}\frac{1}{L}$, then
\begin{align}\label{SRE_main}
\frac{1}{nT}\sum_{t=1}^T\sum_{i=1}^n\E\Big[\big\|\gp(\x_t^i)\big\|^2
+ L^2\big\|\x_{t}^i - \ol{\x}_{t}\big\|^2\Big] 
\leq&~\frac{8\Delta}{\a T}
+ \frac{440\lambda^2\zeta^2}{(1-\lambda^2)^3T}.
\end{align}
Let $\epsilon>0$ be given. Recall from~\eqref{lambda_K} that~$\lambda := \ltrue^K$ and we set 
\begin{align*}
K \asymp \frac{\log\zeta}{1-\ltrue},
\end{align*}
so that
$\frac{1}{1-\lambda} = \mc{O}(1),
\lambda\zeta = \mc{O}(1);$
moreover, we let
\begin{align}\label{SRE_qba}
q = \sqrt{nm},
\qquad
b = \max\left\{\sqrt{\frac{m}{n}},1\right\},
\qquad
\a\asymp\frac{1}{L}.
\end{align}
As a consequence, we have from~\eqref{SRE_main} that
\begin{align}\label{SRE_main_F}
\frac{1}{nT}\sum_{t=1}^T\sum_{i=1}^n\E\Big[\big\|\gp(\x_t^i)\big\|^2
+ L^2\big\|\x_{t}^i - \ol{\x}_{t}\big\|^2\Big] 
\lesssim \frac{L\Delta}{T}.
\end{align}
In view of~\eqref{SRE_main_F}, we further choose 
\begin{align}\label{SRE_T}
T \asymp \frac{L\Delta}{\epsilon^2} + q
\end{align}
which leads to $\frac{1}{nT}\sum_{t=1}^T\sum_{i=1}^n\E\big[\|\gp(\x_t^i)\|^2
+ L^2\|\x_{t}^i - \ol{\x}_{t}\|^2\big]
\lesssim \epsilon^2.$ The communication complexity is thus $TK$.
Since \texttt{ProxGT-SR-E} requires $m$ samples every $q$ iterations and $b$ samples at each iteration, its total sample complexity is bounded by 
\begin{align}\label{SRE_IFO}
\mc{O}\left(T\left(b + \frac{m}{q}\right)\right).
\end{align}
Plugging~\eqref{SRE_qba} and~\eqref{SRE_T} into~\eqref{SRE_IFO} concludes 
the ensuing sample complexity.

\section{Proof of Lemma~\ref{lem_descent}}\label{app_proof_descent_lem}
\subsection{Step 1: Descent Inequality for the Convex Part}
First of all, we write the proximal descent step in Algorithm~\ref{PGT} in an equivalent form for analysis purposes. For all~$t\geq1$ and~$i\in\mc{V}$, we observe that
\begin{align}\label{PG}
\z_{t+1}^i
=\p_{\alpha h}\big(\x_{t}^i - \alpha \y_{t+1}^i\big)  
=&~\argmin_{\u\in\R^p}\left\{\frac{1}{2}\big\|\u - (\x_{t}^i - \alpha \y_{t+1}^i)\big\|^2 + \alpha h(\u)\right\}       \nonumber\\
=&~\argmin_{\u\in\R^p}\left\{\frac{1}{2}\big\|\u - \x_{t}^i\big\|^2 
+ \bl \a\y_{t+1}^i,\u-\x_t^i\br
+ \frac{1}{2}\big\|\alpha \y_{t+1}^i\big\|^2
+ \alpha h(\u)\right\}       \nonumber\\
=&~\argmin_{\u\in\R^p}\left\{\bl \y_{t+1}^i,\u\br
+ \frac{1}{2\alpha}\big\|\u - \x_{t}^i\big\|^2 + h(\u)\right\}.
\end{align}
In light of the optimality condition of the strongly convex optimization problem~\eqref{PG} and the sum rule of subdifferential calculus~\cite[Theorem 3.40]{book_Beck2}, for all~$t\geq1$ and~$i\in\mc{V}$, there exists~$h'(\z_{t+1}^i)\in\partial h(\z_{t+1}^i)$ such that
% \begin{align*}
% \y_{t+1}^i + \frac{1}{\alpha}\big(\z_{t+1}^i - \x_{t}^i\big)
% + h'(\z_{t+1}^i) = 0,
% \end{align*}
% that is,
\begin{align}\label{opt_PG}
h'(\z_{t+1}^i) = - \y_{t+1}^i - \frac{1}{\alpha}\big(\z_{t+1}^i - \x_{t}^i\big).
\end{align}
By the subgradient inequality, we have: $\forall t\geq1$, $\forall i\in\mc{V}$, and $\forall \u\in\R^p$,
\begin{align*}
h(\u) \geq h(\z_{t+1}^i) + \bl h'(\z_{t+1}^i), \u - \z_{t+1}^i\br,
\end{align*}
which is the same as
\begin{align}\label{subgrad_h}
h(\z_{t+1}^i) \leq h(\u) + \bl h'(\z_{t+1}^i), \z_{t+1}^i - \u\br.  
\end{align}
Applying~\eqref{opt_PG} to~\eqref{subgrad_h}, we obtain: $\forall t\geq1$, $\forall i\in\mc{V}$, and $\forall \u\in\R^p$,
\begin{align}\label{descent0_h}
h(\z_{t+1}^i) 
\leq
h(\u) 
- \frac{1}{\alpha}\left\langle \x_{t}^i - \z_{t+1}^i ,  \u - \z_{t+1}^i\right\rangle
- \left\langle \y_{t+1}^i, \z_{t+1}^i - \u\right\rangle.
\end{align}
We have the following algebraic identity: $\forall t\geq1$, $\forall i\in\mc{V}$, and $\forall \u\in\R^p$,
\begin{align}\label{three_points}
\left\langle \x_{t}^i - \z_{t+1}^i ,  \u - \z_{t+1}^i\right\rangle
= \frac{1}{2}\left\|\u - \z_{t+1}^i\right\|^2
+ \frac{1}{2}\left\|\x_{t}^i - \z_{t+1}^i\right\|^2
- \frac{1}{2}\left\|\x_{t}^i - \u\right\|^2.
\end{align}
Applying~\eqref{three_points} to~\eqref{descent0_h}, we obtain: $\forall t\geq1$, $\forall i\in\mc{V}$, and $\forall \u\in\R^p$,
\begin{align}\label{descent1_h}
h(\z_{t+1}^i) 
\leq
h(\u) 
-\frac{1}{2\alpha}\left\|\u - \z_{t+1}^i\right\|^2
- \frac{1}{2\alpha}\left\|\x_{t}^i - \z_{t+1}^i\right\|^2
+ \frac{1}{2\alpha}\left\|\x_{t}^i - \u\right\|^2
- \left\langle \y_{t+1}^i, \z_{t+1}^i - \u\right\rangle.
\end{align}
Setting~$\u:= \ol{\x}_t$, we have: $\forall t\geq1$ and $\forall i\in\mc{V}$, 
\begin{align}\label{descent2_h}
h(\z_{t+1}^i) 
\leq&~h(\ol{\x}_t) 
-\frac{1}{2\alpha}\left\|\ol{\x}_t - \z_{t+1}^i\right\|^2
- \frac{1}{2\alpha}\left\|\x_{t}^i - \z_{t+1}^i\right\|^2
+ \frac{1}{2\alpha}\left\|\x_{t}^i - \ol{\x}_t\right\|^2
- \left\langle \y_{t+1}^i, \z_{t+1}^i - \ol{\x}_t\right\rangle
\nonumber\\
% =&~h(\ol{\x}_t) 
% -\frac{1}{2\alpha}\left\|\ol{\x}_t - \z_{t+1}^i\right\|^2
% - \frac{\alpha}{2}\left\|\s_{t}^i\right\|^2
% + \frac{1}{2\alpha}\left\|\x_{t}^i - \ol{\x}_t\right\|^2
% - \left\langle \y_{t+1}^i, \z_{t+1}^i - \ol{\x}_t\right\rangle
% \nonumber\\
=&~h(\ol{\x}_t) 
-\frac{1}{2\alpha}\left\|\ol{\x}_t - \z_{t+1}^i\right\|^2
- \frac{\alpha}{2}\left\|\sgp_{t}^{i}\right\|^2
+ \frac{1}{2\alpha}\left\|\x_{t}^i - \ol{\x}_t\right\|^2
- \left\langle \y_{t+1}^i - \ol{\y}_{t+1}, \z_{t+1}^i - \ol{\x}_t\right\rangle \nonumber\\
&- \left\langle \ol{\y}_{t+1}, \z_{t+1}^i - \ol{\x}_t\right\rangle,
\end{align}
where the last line uses~\eqref{sgp}.
For the second last term in~\eqref{descent2_h}, we have:~$\forall t\geq1$ and $\forall i\in\mc{V}$, 
\begin{align}\label{ip1_descent_h}
- \left\langle \y_{t+1}^i - \ol{\y}_{t+1}, \z_{t+1}^i - \ol{\x}_t\right\rangle
\leq&~\|\y_{t+1}^i - \ol{\y}_{t+1}\| \|\z_{t+1}^i - \ol{\x}_t\|
\nonumber\\
\leq&~\frac{\alpha}{2}\|\y_{t+1}^i - \ol{\y}_{t+1}\|^2
+ \frac{1}{2\alpha} \|\z_{t+1}^i - \ol{\x}_t\|^2,
\end{align}
where the first and the second line use the Cauchy-Schwarz and Young's inequality respectively. 
Plugging~\eqref{ip1_descent_h} into~\eqref{descent2_h} gives: $\forall t\geq1$ and $\forall i\in\mc{V}$, 
\begin{align}\label{descent3_h}
h(\z_{t+1}^i) 
\leq h(\ol{\x}_t) 
- \frac{\alpha}{2}\left\|\sgp_{t}^i\right\|^2
+ \frac{1}{2\alpha}\left\|\x_{t}^i - \ol{\x}_t\right\|^2
+ \frac{\alpha}{2}\|\y_{t+1}^i - \ol{\y}_{t+1}\|^2
- \left\langle \ol{\y}_{t+1}, \z_{t+1}^i - \ol{\x}_t\right\rangle.
\end{align}
We now average~\eqref{descent3_h} over~$i$ from~$1$ to~$n$ to obtain:~$\forall t\geq1$,
\begin{align}\label{descent5_h}
\frac{1}{n}\sum_{i=1}^n h(\z_{t+1}^i) 
\leq&~h(\ol{\x}_t) 
- \frac{\alpha}{2n}\sum_{i=1}^n\left\|\sgp_{t}^i\right\|^2
+ \frac{1}{2\alpha n}\left\|\x_{t} - \J\x_{t}\right\|^2
+ \frac{\alpha}{2n}\|\y_{t+1} - \J\y_{t+1}\|^2
- \big\langle \ol{\y}_{t+1}, \ol{\z}_{t+1} - \ol{\x}_t\big\rangle
\nonumber\\
=&~h(\ol{\x}_t) 
- \frac{\alpha}{2n}\sum_{i=1}^n\left\|\sgp_{t}^i\right\|^2
+ \frac{1}{2\alpha n}\left\|\x_{t} - \J\x_{t}\right\|^2
+ \frac{\alpha}{2n}\|\y_{t+1} - \J\y_{t+1}\|^2
+ \a\left\langle \ol{\y}_{t+1}, \ol{\sgp}_t\right\rangle,
\end{align}
where the second line follows from~\eqref{sgp_ave}. 
In light of the convexity of~$h$ and Jensen's inequality, for all~$t\geq1$ we have that $h(\ol{\z}_{t+1})\leq\frac{1}{n}\sum_{i=1}^n h(\z_{t+1}^i)$ and hence~\eqref{descent5_h}
implies
\begin{align}\label{descent6_h}
h(\ol{\z}_{t+1})
\leq h(\ol{\x}_t) 
- \frac{\alpha}{2n}\sum_{i=1}^n\left\|\sgp_{t}^i\right\|^2
+ \frac{1}{2\alpha n}\left\|\x_{t} - \J\x_{t}\right\|^2
+ \frac{\alpha}{2n}\|\y_{t+1} - \J\y_{t+1}\|^2
+ \alpha \left\langle \ol{\y}_{t+1}, \ol{\sgp}_{t}\right\rangle,
\quad\forall t\geq1.
\end{align}
In view of~\eqref{z_track}, we observe that~\eqref{descent6_h} is the same as
\begin{align}\label{descent_h}
h(\ol{\x}_{t+1})
\leq h(\ol{\x}_t) 
- \frac{\alpha}{2n}\sum_{i=1}^n\left\|\sgp_{t}^i\right\|^2
+ \frac{1}{2\alpha n}\left\|\x_{t} - \J\x_{t}\right\|^2
+ \frac{\alpha}{2n}\|\y_{t+1} - \J\y_{t+1}\|^2
+ \alpha \left\langle \ol{\y}_{t+1}, \ol{\sgp}_{t}\right\rangle, 
\quad\forall t\geq1.
\end{align}

\subsection{Step 2: Descent Inequality for the Non-Convex Part} 
Since~$F$ is~$L$-smooth, we have the standard quadratic upper bound~\cite[Lemma 5.7]{book_Beck2}:
\begin{align}\label{Lsmooth_F}
F(\y) \leq F(\x) + \bl \n F(\x), \y - \x\br
+ \frac{L}{2}\|\y-\x\|^2, \qquad\forall\x,\y\in\R^{p}.
\end{align}
Setting~$\y = \ol{\x}_{t+1}$ and~$\x = \ol{\x}_t$ in~\eqref{Lsmooth_F}, we obtain:~$\forall t\geq1$,
\begin{align}\label{descent_f}
F(\ol{\x}_{t+1}) 
\leq&~F(\ol{\x}_t) + \bl \n F(\ol{\x}_t), \ol{\x}_{t+1} - \ol{\x}_t\br
+ \frac{L}{2}\|\ol{\x}_{t+1}-\ol{\x}_{t}\|^2 \nonumber\\    
=&~F(\ol{\x}_t) - \alpha \bl \n F(\ol{\x}_t), \ol{\sgp}_{t}\br
+ \frac{L\alpha^2}{2}\|\ol{\sgp}_{t}\|^2,
\end{align}
where the last line is due to~\eqref{sgp_ave_d}.

\subsection{Step 3: Combining Step 1 and Step 2} 
Recall that~$\Psi:= F + h$. Summing up~\eqref{descent_f} and~\eqref{descent_h}, we obtain:~$\forall t\geq1$,
\begin{align}\label{descent_Psi1}
\Psi(\ol{\x}_{t+1}) 
\leq&~\Psi(\ol{\x}_t) 
- \frac{\alpha}{2n}\sum_{i=1}^n\left\|\sgp_{t}^i\right\|^2
+ \frac{1}{2\alpha n}\left\|\x_{t} - \J\x_{t}\right\|^2
+ \frac{\alpha}{2n}\|\y_{t+1} - \J\y_{t+1}\|^2 \nonumber\\
&+ \alpha \left\langle \ol{\y}_{t+1} - \n F(\ol{\x}_t), \ol{\sgp}_{t}\right\rangle
+ \frac{L\alpha^2}{2}\|\ol{\sgp}_{t}\|^2.
\end{align}
By the Cauchy-Schwarz and Young's inequality, we have: $\forall\eta>0$ and~$\forall t\geq1$,
\begin{align}\label{ip_descent_Psi1}
\left\langle \ol{\y}_{t+1} - \n F(\ol{\x}_t), \ol{\sgp}_{t}\right\rangle
\leq&~\frac{1}{2\eta}\left\|\ol{\y}_{t+1} - \n F(\ol{\x}_t)\right\|^2 + \frac{\eta}{2}\left\|\ol{\sgp}_{t}\right\|^2
\end{align}
Applying~\eqref{ip_descent_Psi1} to~\eqref{descent_Psi1}, we obtain: $\forall\eta>0$ and~$\forall t\geq1$,
\begin{align}\label{descent_Psi2}
\Psi(\ol{\x}_{t+1}) 
\leq&~\Psi(\ol{\x}_t) 
- \frac{\alpha}{2n}\sum_{i=1}^n\left\|\sgp_{t}^i\right\|^2
+ \frac{1}{2\alpha n}\left\|\x_{t} - \J\x_{t}\right\|^2
+ \frac{\alpha}{2n}\|\y_{t+1} - \J\y_{t+1}\|^2 \nonumber\\
&+ \frac{\alpha}{2\eta}\left\|\ol{\y}_{t+1} - \n F(\ol{\x}_t)\right\|^2 
+ \frac{\eta\alpha + L\alpha^2}{2}\|\ol{\sgp}_{t}\|^2.
\end{align}

\noindent
\subsection{Step 4: Refining Error Terms and Telescoping Sum}
We first bound the difference between the local stochastic gradient mapping~$\sgp_{t}^i$ defined in~\eqref{sgp} and the exact gradient mapping~$\gp(\x_t^i)$ defined in~\eqref{GP}. Observe that~$\forall t\geq1$ and~$\forall i\in\mc{V}$,
\begin{align}\label{map_diff}
\left\| \sgp_{t}^i - \gp(\x_t^i)\right\|^2
=&~\left\|\frac{1}{\a}\Big(\x_t^i - \p_{\alpha h}\big(\x_{t}^i - \alpha \y_{t+1}^i\big)\Big) 
- \frac{1}{\a}\Big(\x_t^i - \p_{\alpha h}\big(\x_t^i - \alpha \n F(\x_t^i)\big)\Big)\right\|^2
\nonumber\\
=&~\frac{1}{\alpha^2}\Big\|\p_{\alpha h}\big(\x_{t}^i - \alpha \y_{t+1}^i\big) - \p_{\alpha h}\big(\x_t^i - \alpha \n F(\x_t^i)\big)\Big\|^2 
\nonumber\\
\leq&~\big\|\y_{t+1}^i - \n F(\x_t^i)\big\|^2  
\nonumber\\
=&~\big\|\y_{t+1}^i - \ol{\y}_{t+1} + \ol{\y}_{t+1} - \n F(\ol{\x}_t) + \n F(\ol{\x}_t) - \n F(\x_t^i)\big\|^2  
\nonumber\\
\leq&~3\big\|\y_{t+1}^i - \ol{\y}_{t+1}\big\|^2
+3\big\|\ol{\y}_{t+1} - \n F(\ol{\x}_t)\big\|^2
+3L^2\big\|\ol{\x}_t - \x_t^i\big\|^2,
\end{align}
where the third line is due to Lemma~\ref{lem_prox_nonexp} and the last line uses the~$L$-smoothness of~$F$. Observe that~$\forall t\geq1$,
\begin{align}\label{map_diff2}
- \big\|\sgp_{t}^i\big\|^2
\leq& -\frac{1}{2}\big\|\gp(\x_t^i)\big\|^2 + \big\| \sgp_{t}^i - \gp(\x_t^i) \big\|^2
\nonumber\\
\leq& -\frac{1}{2}\big\|\gp(\x_t^i)\big\|^2 
+ 3\big\|\y_{t+1}^i - \ol{\y}_{t+1}\big\|^2
+3\big\|\ol{\y}_{t+1} - \n F(\ol{\x}_t)\big\|^2
+3L^2\big\|\ol{\x}_t - \x_t^i\big\|^2,
\end{align} 
where the first line is due to the standard triangular inequality and the second line uses~\eqref{map_diff}. Averaging~\eqref{map_diff2} over~$i$ from~$1$ to~$n$ gives:~$\forall t\geq1$,
\begin{align}\label{map_diff3}
- \frac{1}{n}\sum_{i=1}^n\left\|\sgp_{t}^i\right\|^2
\leq - \frac{1}{2n}\sum_{i=1}^n\left\|\gp(\x_t^i)\right\|^2 
+\frac{3}{n}\left\|\y_{t+1} - \J\y_{t+1}\right\|^2
+3\left\|\ol{\y}_{t+1} - \n F(\ol{\x}_t)\right\|^2
+ \frac{3L^2}{n}\big\|\x_t - \J\x_t\big\|^2.
\end{align} 
We now plug~\eqref{map_diff3} into~\eqref{descent_Psi2} to obtain:~$\forall\eta>0$ and~$\forall t\geq1$,
\begin{align}\label{descent_Psi3}
\Psi(\ol{\x}_{t+1}) 
\leq&~\Psi(\ol{\x}_t) 
- \frac{\alpha}{4n}\sum_{i=1}^n\left\|\sgp_{t}^i\right\|^2
- \frac{\alpha}{8n}\sum_{i=1}^n\left\|\gp(\x_t^i)\right\|^2
+ \left(\frac{1}{2\alpha}+\frac{3\a L^2}{4}\right)\frac{1}{n}\left\|\x_{t} - \J\x_{t}\right\|^2
+ \frac{5\alpha}{4n}\|\y_{t+1} - \J\y_{t+1}\|^2 \nonumber\\
&+ \left(\frac{3}{2} + \frac{1}{\eta}\right)\frac{\a}{2}\left\|\ol{\y}_{t+1} - \n F(\ol{\x}_t)\right\|^2 
+ \frac{\eta\alpha + L\alpha^2}{2}\|\ol{\sgp}_{t}\|^2 \nonumber\\
\leq&~\Psi(\ol{\x}_t) 
- \frac{\alpha - 2\eta\alpha - 2L\a^2}{4n}\sum_{i=1}^n\left\|\sgp_{t}^i\right\|^2
- \frac{\alpha}{8n}\sum_{i=1}^n\left\|\gp(\x_t^i)\right\|^2
+ \left(\frac{1}{2\alpha}+\frac{3\a L^2}{4}\right)\frac{1}{n}\left\|\x_{t} - \J\x_{t}\right\|^2
\nonumber\\
&+ \frac{5\alpha}{4n}\|\y_{t+1} - \J\y_{t+1}\|^2
+ \left(\frac{3}{2} + \frac{1}{\eta}\right)\frac{\a}{2}\left\|\ol{\y}_{t+1} - \n F(\ol{\x}_t)\right\|^2, 
\end{align}
where the last line is due to~$\|\ol{\sgp}_{t}\|^2 \leq \frac{1}{n}\sum_{i=1}^n\|\sgp_{t}^i\|^2.$
Setting~$\eta = \frac{1}{8}$ and~$0<\alpha\leq \frac{1}{8L}$ in~\eqref{descent_Psi3},
we have:~$\forall t\geq1$,
\begin{align}\label{descent_Psi3'}
\Psi(\ol{\x}_{t+1}) 
\leq&~\Psi(\ol{\x}_t) 
- \frac{\alpha}{8n}\sum_{i=1}^n\left\|\sgp_{t}^i\right\|^2
- \frac{\alpha}{8n}\sum_{i=1}^n\left\|\gp(\x_t^i)\right\|^2
+ \left(\frac{1}{2\alpha}+\frac{3\a L^2}{4}\right)\frac{1}{n}\left\|\x_{t} - \J\x_{t}\right\|^2
\nonumber\\
&+ \frac{5\alpha}{4n}\|\y_{t+1} - \J\y_{t+1}\|^2
+ \frac{19\a}{4}\left\|\ol{\y}_{t+1} - \n F(\ol{\x}_t)\right\|^2.
\end{align}
Towards the last term in~\eqref{descent_Psi3'}, observe that,~$\forall t\geq1$,
\begin{align}\label{descent_Psi3_y_track}
\big\|\ol{\y}_{t+1} - \n F(\ol{\x}_t)\big\|^2
=&~\|\ol{\mb{v}}_{t} - \n F(\ol{\x}_t)\|^2 \nonumber\\
\leq&~2\big\|\ol{\mb{v}}_{t} - \ol{\nf}(\x_t)\big\|^2
+ 2\big\|\ol{\nf}(\x_t) - \n F(\ol{\x}_t)\big\|^2 \nonumber\\
\leq&~2\big\|\ol{\mb{v}}_{t} - \ol{\nf}(\x_t)\big\|^2
+ 2L^2n^{-1}\big\|\x_t - \J\x_t\big\|^2,
\end{align}
where the first line is due to~\eqref{y_track} while the last line uses the~$L$-smoothness of each~$f_i$, i.e., 
\begin{align*}
\big\|\ol{\nf}(\x_t) - \n F(\ol{\x}_t)\big\|^2
= \left\|\frac{1}{n}\sum_{i=1}^n\Big(\n f_i(\x_t^i) - \n f_i(\ol{\x}_t)\Big)\right\|^2
\leq \frac{1}{n}\sum_{i=1}^n\big\|\n f_i(\x_t^i) - \n f_i(\ol{\x}_t)\big\|^2
\leq \frac{L^2}{n}\big\|\x_t - \J\x_t\big\|^2.
\end{align*}
Plugging~\eqref{descent_Psi3_y_track} into~\eqref{descent_Psi3'}, we have: if~$0<\a\leq\frac{1}{8L}$, then~$\forall t\geq1$,
\begin{align}\label{descent_Psi4}
\Psi(\ol{\x}_{t+1}) 
\leq&~\Psi(\ol{\x}_t) 
- \frac{\alpha}{8n}\sum_{i=1}^n\left\|\sgp_{t}^i\right\|^2
- \frac{\alpha}{8n}\sum_{i=1}^n\left\|\gp(\x_t^i)\right\|^2
+ \left(\frac{1}{2\alpha}+\frac{41\a L^2}{4}\right)\frac{1}{n}\left\|\x_{t} - \J\x_{t}\right\|^2
\nonumber\\
&+ \frac{5\alpha}{4n}\|\y_{t+1} - \J\y_{t+1}\|^2
+ \frac{19\a}{2}\left\|\ol{\mb{v}}_{t} - \ol{\nf}(\x_t)\right\|^2.  
\end{align}
Telescoping sum~\eqref{descent_Psi4} over~$t$ from~$1$ to~$T$, we have:
if~$0<\a\leq\frac{1}{8L}$, then
\begin{align}\label{descent_Psi5}
\Psi(\ol{\x}_{T+1}) 
\leq&~\Psi(\ol{\x}_1) 
- \frac{\alpha}{8n}\sum_{t=1}^T\sum_{i=1}^n\left\|\sgp_{t}^i\right\|^2
- \frac{\alpha}{8n}\sum_{t=1}^T\sum_{i=1}^n\left\|\gp(\x_t^i)\right\|^2
+ \left(\frac{1}{2\alpha}+\frac{41\a L^2}{4}\right)\frac{1}{n}\sum_{t=1}^T\left\|\x_{t} - \J\x_{t}\right\|^2
\nonumber\\
&+ \frac{5\alpha}{4n}\sum_{t=1}^T\|\y_{t+1} - \J\y_{t+1}\|^2
+ \frac{19\a}{2}\sum_{t=1}^T\left\|\ol{\mb{v}}_{t} - \ol{\nf}(\x_t)\right\|^2.  
\end{align}
With~$\inf_{\x\in\R^p}\Psi(\x)\geq\ul{\Psi}>-\infty$ and minor rearrangement,~\eqref{descent_Psi5} implies the following: if~$0<\a\leq\frac{1}{8L}$, then
\begin{align*}
\frac{1}{n}\sum_{t=1}^T\left(\sum_{i=1}^n\left\|\gp(\x_t^i)\right\|^2
+ L^2\left\|\x_{t} - \J\x_{t}\right\|^2\right) 
\leq&~\frac{8(\Psi(\ol{\x}_1) - \ul{\Psi})}{\a}
- \frac{1}{n}\sum_{t=1}^T\sum_{i=1}^n\left\|\sgp_{t}^i\right\|^2
+ 76\sum_{t=1}^T\left\|\ol{\mb{v}}_{t} - \ol{\nf}(\x_t)\right\|^2
\nonumber\\
&+ \left(\frac{4}{\alpha^2}+83L^2\right)\frac{1}{n}\sum_{t=1}^T\left\|\x_{t} - \J\x_{t}\right\|^2
+ \frac{10}{n}\sum_{t=2}^{T+1}\|\y_{t} - \J\y_{t}\|^2,  
\end{align*}
which finishes the proof of Lemma~\ref{lem_descent} by~$83L^2\leq\frac{2}{\a^2}$.

\section{Proof of Lemma~\ref{lem_consensus}}\label{app_proof_consensus_lem}
For ease of exposition, we define a block-wise proximal mapping for~$h$:
\begin{align}\label{prox_vec}
\bp(\mb{c})
:= 
\begin{bmatrix}
\p_{\alpha h}(\mb{c}_1) \\[0.2em]
\cdots \\
\p_{\alpha h}(\mb{c}_n) 
\end{bmatrix}
\in\R^{np},
\quad
\text{where}
\quad
\mb{c}:= 
\begin{bmatrix}
\mb{c}_1 \\[0.2em]
\cdots \\
\mb{c}_n
\end{bmatrix}
\end{align}
such that~$\mb{c}_i\in\R^p,\forall i\in[n]$. In view of~\eqref{prox_vec}, the~$\x$-update in Algorithm~\ref{PGT} can compactly be written as
\begin{align}\label{prox_x_vec}
\x_{t+1} = \W^K\bp(\x_t - \alpha \y_{t+1}),
\qquad
\forall t\geq1.
\end{align}
We find the following quantity helpful:~$\forall t\geq1$,
\begin{align}\label{consensus1}
\big(\W^{K}-\J\big)\bp(\J\x_t - \alpha\J\y_{t+1}) 
=&~\Big(\big(\Wtrue^K-\tfrac{1}{n}\mb{1}_n\mb{1}_n^\top\big)\otimes\I_p\Big)\Big(\mb{1}_n \otimes\p_{\alpha h}\big(\ol{\x}_t-\alpha\ol{\y}_{t+1}\big)\Big) \nonumber\\
=&~\Big(\big(\Wtrue^K-\tfrac{1}{n}\mb{1}_n\mb{1}_n^\top\big)\mb{1}_n\Big)\otimes\p_{\alpha h}\big(\ol{\x}_t-\alpha\ol{\y}_{t+1}\big) \nonumber\\
=&~\mb{0}_{np},    
\end{align}
where the first line uses the definition of~$\W$,~$\J$, and~$\bp$, and the last line is due to the doubly stochasticity of~$\Wtrue$. We are now prepared to analyze the consensus error recursion in the following. For all~$t\geq1$, we have
\begin{align}\label{consensus2}
\big\|\x_{t+1}-\J\x_{t+1}\big\|^2
=&~\Big\|\W^K\bp(\x_t - \alpha \y_{t+1})-\J\W^K\bp(\x_t - \alpha \y_{t+1})\Big\|^2 \nonumber\\
=&~\Big\|\big(\W^K-\J\big)\bp(\x_t - \alpha \y_{t+1})\Big\|^2      \nonumber\\
=&~\Big\|\big(\W^K-\J\big)\Big(\bp(\x_t - \alpha \y_{t+1}) - \bp(\J\x_t - \alpha \J\y_{t+1})\Big)\Big\|^2
\nonumber\\
\leq&~\lambda^2\Big\|\bp(\x_t - \alpha \y_{t+1}) - \bp(\J\x_t - \alpha \J\y_{t+1})\Big\|^2,
\end{align}
where the first line uses~\eqref{prox_x_vec}, the second line follows from Lemma~\ref{lem_weight}\eqref{W_equal}, the third line is due to~\eqref{consensus1}, and the last line uses Lemma~\ref{lem_weight}\eqref{W_spectral}. 
To proceed from~\eqref{consensus2}, we observe that~$\forall t\geq1$,
\begin{align}\label{block_nonexp}
\Big\|\bp(\x_t - \alpha \y_{t+1}) - \bp(\J\x_t - \alpha \J\y_{t+1})\Big\|^2 
=&~\sum_{i=1}^n\Big\|\p_{\alpha h}(\x_t^i - \alpha\y_{t+1}^i) - \p_{\alpha h}(\ol{\x}_t - \alpha\ol{\y}_{t+1})\Big\|^2     \nonumber\\
\leq&~\sum_{i=1}^n\big\|\x_t^i - \ol{\x}_t - \alpha(\y_{t+1}^i - \ol{\y}_{t+1})\big\|^2
\nonumber\\
=&~\big\|\x_t - \J\x_t - \alpha(\y_{t+1} - \J\y_{t+1})\big\|^2,
\end{align}
where the first and the second line uses~\eqref{prox_vec} and Lemma~\ref{lem_prox_nonexp} respectively. We then plug~\eqref{block_nonexp} into~\eqref{consensus2} to obtain:~$\forall t\geq1$ and~$\forall \eta>0$,
\begin{align}\label{consensus3}
\big\|\x_{t+1}-\J\x_{t+1}\big\|^2
\leq&~\lambda^2\big\|\x_t - \J\x_t - \alpha(\y_{t+1} - \J\y_{t+1})\big\|^2
\nonumber\\
=&~\lambda^2\big\|\x_t - \J\x_t\big\|^2 + \lambda^2\alpha^2\big\|\y_{t+1} - \J\y_{t+1}\big\|^2
- 2\lambda^2\big\langle \x_t - \J\x_t, \alpha(\y_{t+1} - \J\y_{t+1})\big\rangle
\nonumber\\
\leq&~\lambda^2\big\|\x_t - \J\x_t\big\|^2 + \lambda^2\alpha^2\big\|\y_{t+1} - \J\y_{t+1}\big\|^2
+ 2\lambda^2\big\|\x_t - \J\x_t\big\| \big\|\alpha(\y_{t+1} - \J\y_{t+1})\big\|
\nonumber\\
\leq&~\lambda^2\big(1+\eta\big)\big\|\x_t - \J\x_t\big\|^2 + \lambda^2\alpha^2\big(1+\eta^{-1}\big)\big\|\y_{t+1} - \J\y_{t+1}\big\|^2,
\end{align}
where the third and the last line use the Cauchy-Schwarz and Young's inequality with parameter~$\eta$ respectively. Finally, setting~$\eta = \frac{1-\lambda^2}{2\lambda^2}$ in~\eqref{consensus3} yields:~$\forall t\geq1$,
\begin{align}\label{consensus4}
\big\|\x_{t+1}-\J\x_{t+1}\big\|^2
\leq \frac{1+\lambda^2}{2}\big\|\x_t - \J\x_t\big\|^2 + \frac{\lambda^2\alpha^2(1+\lambda^2)}{1-\lambda^2}\big\|\y_{t+1} - \J\y_{t+1}\big\|^2.
\end{align}
Applying Lemma~\ref{lem_acc} to~\eqref{consensus4}, we have:~$\forall T\geq2$,
\begin{align*}
\sum_{t=1}^T\big\|\x_{t}-\J\x_{t}\big\|^2
\leq  
\frac{2\lambda^2\alpha^2(1+\lambda^2)}{(1-\lambda^2)^2}\sum_{t=1}^{T-1}\big\|\y_{t+1} - \J\y_{t+1}\big\|^2,
\end{align*}
which finishes the proof of Lemma~\ref{lem_consensus}.

\section{Proof of Lemma~\ref{lem_VR}}\label{app_proof_VR_lem}

\subsection{Proof of Lemma~\ref{lem_VR}(\ref{lem_VR_M})}
We first recall that the gradient estimator~$\mb{v}_t^i$ in Algorithm~\ref{PGT_SA} takes the following form:~$\forall t\geq1$ and~$i\in\mc{V}$,
\begin{align*}
\mb{v}_t^i:= 
\frac{1}{b}\sum_{s=1}^b\n G_i\big(\x_t^i,\X_{i,s}^t\big).    
\end{align*}
Observe that~$\forall t\geq1$,
\begin{align*}
\E\Big[\big\|\ol{\mb{v}}_t - \ol{\nf}(\x_t)\big\|^2\big|\F_t\Big]
=&~\E\left[\left\|\frac{1}{nb}\sum_{i=1}^n\sum_{s=1}^b\Big(\n G_i\big(\x_t^i,\X_{i,s}^t\big) - \n f_i(\x_t^i)\Big)\right\|^2\Bigg|\F_t\right]
\nonumber\\
=&~\frac{1}{(nb)^2}\sum_{i=1}^n\sum_{s=1}^b\E\left[\left\|
\n G_i\big(\x_t^i,\X_{i,s}^t\big) - \n f_i(\x_t^i)\right\|^2\big|\F_t\right]
\nonumber\\
\leq&~\frac{1}{(nb)^2}\sum_{i=1}^n\sum_{s=1}^b\nu_i^2
\nonumber\\
=&~\frac{\nu^2}{nb},
\end{align*}
where the second line uses Assumption~\ref{asp_unbias} and the fact that~$\x_t$ is~$\F_t$-measurable and~$\{\X_{i,s}^t:i\in\mc{V},s\in[b]\}$ is independent of~$\F_t$,
while the third line is due to Assumption~\ref{asp_bvr}.

\subsection{Proof of Lemma~\ref{lem_VR}(\ref{lem_VR_O}) and Lemma~\ref{lem_VR}(\ref{lem_VR_E})}
To facilitate the analysis, we first note that the gradient estimator~$\mb{v}_t^i$ in both Algorithm~\ref{PGT_SR_O} and~\ref{PGT_SR_E} take the following form: $\forall i\in\mc{V}$ and $\forall t\geq1$ such that $\bmod(t,q) \neq 1$,
\begin{align*}
\mb{v}_t^i:= 
\frac{1}{b}\sum_{s=1}^{b}\Big(\n G_i(\x_t^i,\X_{i,s}^t) - \n G_i(\x_{t-1}^i,\X_{i,s}^t)\Big) + \mb{v}_{t-1}^i.
\end{align*}
To simplify notation, we denote in this section that $$\delta_t := \big\|\ol{\mb{v}}_t - \ol{\nf}(\x_t)\big\|^2, 
\qquad\forall t\geq1.$$
We establish an upper bound on $\delta_t$ that is applicable to both
Algorithm~\ref{PGT_SR_O} and~\ref{PGT_SR_E}.
\begin{lem}\label{lem_VR_SARAH}
Let Assumption~\ref{asp_mss} hold. Suppose that $T = Rq$ for some $R\in\mathbb{Z}^{+}$.
Consider the iterates generated by Algorithm~\ref{PGT_SR_O} or~\ref{PGT_SR_E}.  
Then we have: $\forall T\geq q$,
\begin{align*}
\sum_{t=1}^T\E\big[\delta_t\big]
\leq
\frac{6L^2q}{n^2b}\sum_{t=1}^T\E\Big[\big\|\mb{x}_t - \J\mb{x}_{t}\big\|^2\Big]
+ \frac{3qL^2\a^2}{nb}\sum_{t=1}^{T-1}\E\Big[\big\|\ol{\sgp}_{t}\big\|^2\Big]
+ q\sum_{z=1}^{R}\E\big[\delta_{(z-1)q+1}\big].
\end{align*}
\end{lem}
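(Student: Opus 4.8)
The plan is to exploit the recursive \texttt{SARAH}-type structure of the estimator on the inner iterations (those with $\bmod(t,q)\neq1$), on which the averaged error forms a martingale. First I would introduce the averaged error $\mb{e}_t := \ol{\mb{v}}_t - \ol{\nf}(\x_t)$, so that $\delta_t = \|\mb{e}_t\|^2$, and average the common recursive update over $i$ from $1$ to $n$ to write, for every inner iteration,
\begin{align*}
\mb{e}_t = \mb{e}_{t-1} + \mb{w}_t,
\qquad
\mb{w}_t := \frac{1}{nb}\sum_{i=1}^n\sum_{s=1}^b\Big[\big(\n G_i(\x_t^i,\X_{i,s}^t) - \n G_i(\x_{t-1}^i,\X_{i,s}^t)\big) - \big(\n f_i(\x_t^i) - \n f_i(\x_{t-1}^i)\big)\Big].
\end{align*}
Since $\x_t,\x_{t-1}\in\F_t$ while the fresh samples $\{\X_{i,s}^t\}$ are independent of $\F_t$, Assumption~\ref{asp_unbias} gives $\E[\mb{w}_t|\F_t]=\mb{0}$, and $\mb{e}_{t-1}$ is $\F_t$-measurable. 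Hence the cross term vanishes under the tower property, yielding the exact identity $\E[\delta_t] = \E[\delta_{t-1}] + \E[\|\mb{w}_t\|^2]$ on each inner iteration.

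The second step is to bound $\E[\|\mb{w}_t\|^2]$. Conditioned on $\F_t$, the $nb$ summands defining $\mb{w}_t$ are independent and mean-zero, so the conditional second moment splits as a sum over $(i,s)$; discarding the subtracted conditional mean can only increase each second moment, and Assumption~\ref{asp_mss} (mean-squared smoothness, in its population or finite-sum form) then bounds the $(i,s)$-term by $L^2\|\x_t^i-\x_{t-1}^i\|^2$. Collecting the $b$ identical contributions per node gives $\E[\|\mb{w}_t\|^2|\F_t]\leq \frac{L^2}{n^2 b}\|\x_t-\x_{t-1}\|^2$, and therefore $\E[\delta_t]\leq \E[\delta_{t-1}] + \frac{L^2}{n^2b}\E[\|\x_t-\x_{t-1}\|^2]$ for every inner iteration. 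This single bound simultaneously covers Algorithms~\ref{PGT_SR_O} and~\ref{PGT_SR_E}, since the recursive inner step is identical in both.

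Third, I would telescope this one-step recursion within each epoch. Writing $t_z := (z-1)q+1$ for the reset iteration of epoch $z$, iterating from $t_z$ produces $\E[\delta_t]\leq \E[\delta_{t_z}] + \frac{L^2}{n^2b}\sum_{\tau=t_z+1}^{t}\E[\|\x_\tau-\x_{\tau-1}\|^2]$ for each of the (at most $q$) iterations $t$ in the epoch. Summing over the epoch, and noting that each displacement term is counted at most $q$ times, gives $\sum_{t=t_z}^{zq}\E[\delta_t]\leq q\,\E[\delta_{t_z}] + \frac{L^2 q}{n^2b}\sum_{\tau=t_z+1}^{zq}\E[\|\x_\tau-\x_{\tau-1}\|^2]$. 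Summing over $z=1,\dots,R$ (using $T=Rq$) collapses the displacement sums into $\sum_{t=2}^{T}$ and the base terms into $q\sum_{z=1}^{R}\E[\delta_{(z-1)q+1}]$, which is precisely the last term of the claimed bound.

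Finally, I would invoke Lemma~\ref{lem_x_diff} to replace $\sum_{t=2}^T\E[\|\x_t-\x_{t-1}\|^2]$ by $6\sum_{t=1}^T\E[\|\x_t-\J\x_t\|^2] + 3n\a^2\sum_{t=1}^{T-1}\E[\|\ol{\sgp}_t\|^2]$; multiplying by $\frac{L^2 q}{n^2 b}$ reproduces exactly the two stated error terms with constants $\frac{6L^2q}{n^2b}$ and $\frac{3qL^2\a^2}{nb}$. The main obstacle is the bookkeeping in the third step: one must cleanly separate the reset iterations (which violate the recursion and instead supply the $q\sum_z\E[\delta_{(z-1)q+1}]$ term) from the inner iterations, and verify that the double summation inflates the displacement sum by exactly the factor $q$ and no more. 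The martingale orthogonality in the first step is the other delicate point, as it hinges on the precise $\F_t$-measurability of $\x_t,\x_{t-1}$ together with the conditional independence of the fresh samples $\{\X_{i,s}^t\}$.
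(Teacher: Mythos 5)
Your proposal is correct and follows essentially the same route as the paper's proof: the same martingale-increment identity $\E[\delta_t]=\E[\delta_{t-1}]+\E[\|\mb{w}_t\|^2]$ via conditional unbiasedness, the same per-sample variance splitting plus mean-squared smoothness to get $\E[\delta_t]\leq\E[\delta_{t-1}]+\tfrac{L^2}{n^2b}\E[\|\x_t-\x_{t-1}\|^2]$ on inner iterations, the same epoch-wise telescoping yielding the factor $q$ and the $q\sum_{z}\E[\delta_{(z-1)q+1}]$ term, and the same final invocation of Lemma~\ref{lem_x_diff}. The only (immaterial) difference is that you count each displacement term at most $q$ times where the paper counts $q-1$, which still matches the stated constants.
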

\begin{proof}
Consider any $t\geq1$ such that $\bmod(t,q) \neq 1$. For convenience, we define:~$\forall i\in\mc{V}$,
\begin{align*}
\mb{d}_t^{i,s} :
= \n G_i(\x_t^i,\X_{i,s}^t) - \n G_i(\x_{t-1}^i,\X_{i,s}^t),
\qquad
\mb{d}_t^{i} := \frac{1}{b}\sum_{s=1}^{b}\mb{d}_t^{i,s},
\end{align*}
and we clearly have
\begin{align}\label{diff_exp}
\E\big[\mb{d}_t^{i,s}|\F_t\big]    
= \E\big[\mb{d}_t^{i}|\F_t\big]    
= \n f_i(\x_t^i) - \n f_i(\x_{t-1}^i).    
\end{align}
As a consequence of~\eqref{diff_exp} and of the independence between $\bxi^{t}_{i,s}$ and $\F_t$ for all $i\in\mc{V}$ and $s\in[b]$, we have
\begin{align}\label{ip_indep_i}
\E\Big[\Big\langle \mb{d}_t^{i} - \n f_i(\x_t^i) + \n f_i(\x_{t-1}^i), \mb{d}_t^{r} - \n f_r(\x_t^r) + \n f_r(\x_{t-1}^r) \Big\rangle\big|\F_t\Big] = 0,    
\end{align}
whenever~$i \neq r$, and
\begin{align}\label{ip_indep_s}
\E\Big[\Big\langle \mb{d}_t^{i,s} - \n f_i(\x_t^i) + \n f_i(\x_{t-1}^i), \mb{d}_t^{i,a} - \n f_i(\x_t^i) + \n f_i(\x_{t-1}^r) \Big\rangle\big|\F_t\Big] = 0,    
\end{align}
whenever~$s\neq a$. Moreover, using the conditional variance decomposition with~\eqref{diff_exp} gives: $\forall i\in\mc{V}$ and $s\in[b]$,
\begin{align}\label{var_decom}
\E\Big[\big\|\mb{d}_t^{i,s} - \n f_i(\x_t^i) + \n f_i(\x_{t-1}^i)\big\|^2|\F_t\Big] 
\leq \E\Big[\big\|\mb{d}_t^{i,s}\big\|^2|\F_t\Big]. 
\end{align}
By the update of~$\mb{v}_t^i$, we observe that
\begin{align}\label{v1}
\E\big[\delta_t|\F_t\big]  
=&~\E\!\left[\Bigg\|\frac{1}{n}\sum_{i=1}^n\Big(\mb{d}_t^{i}+\mb{v}_{t-1}^i - \n f_i(\x_t^i)\Big)\Bigg\|^2\bigg|\F_t\right]   \nonumber\\
=&~\E\!\left[\Bigg\|\frac{1}{n}\sum_{i=1}^n\Big(\mb{d}_t^{i} - \n f_i(\x_t^i) + \n f_i(\x_{t-1}^i)  +\mb{v}_{t-1}^i - \n f_i(\x_{t-1}^i) \Big)\Bigg\|^2 \bigg|\F_t\right] \nonumber\\
=&~\E\!\left[\Bigg\|\frac{1}{n}\sum_{i=1}^n\Big(\mb{d}_t^{i} - \n f_i(\x_t^i) + \n f_i(\x_{t-1}^i)\Big)\Bigg\|^2 \Big|\F_t\right] 
+ \delta_{t-1}
\nonumber\\
=&~\frac{1}{n^2}\sum_{i=1}^n\E\!\left[\Big\|\mb{d}_t^{i} - \n f_i(\x_t^i) + \n f_i(\x_{t-1}^i)\Big\|^2 \Big|\F_t\right] 
+ \delta_{t-1}
\nonumber\\
=&~\frac{1}{n^2b^2}\sum_{i=1}^n\sum_{s=1}^b\E\!\left[\Big\|\mb{d}_t^{i,s} - \n f_i(\x_t^i) + \n f_i(\x_{t-1}^i)\Big\|^2 \Big|\F_t\right] 
+ \delta_{t-1}
\nonumber\\
\leq&~\frac{1}{n^2b^2}\sum_{i=1}^n\sum_{s=1}^b\E\!\left[\big\|\mb{d}_t^{i,s}\big\|^2 \big|\F_t\right] 
+ \delta_{t-1},
\end{align}
where the third line uses~\eqref{diff_exp}, the fourth line uses~\eqref{ip_indep_i}, the fifth line uses \eqref{ip_indep_s}, and the last line uses~\eqref{var_decom}. We note that the mean-squared smoothness of $\n G(\cdot)$ implies that for all~$i\in\mc{V}$ and $s\in[b]$,
\begin{align}\label{mss}
\E\Big[\big\|\mb{d}_t^{i,s}\big\|^2\Big]     
\leq L^2\E\Big[\big\|\mb{x}_t^i - \mb{x}_{t-1}^i\big\|^2\Big]. 
\end{align}
Applying~\eqref{mss} to~\eqref{v1} gives: for all $t\geq1$ such that $\bmod(t,q) \neq 1$, 
\begin{align}\label{v2}
\E\big[\delta_t\big] 
\leq \frac{L^2}{n^2b}\E\Big[\big\|\mb{x}_t - \mb{x}_{t-1}\big\|^2\Big] 
+ \E\big[\delta_{t-1}\big].
\end{align}
For convenience, we define
\begin{align*}
\vp_t 
:= \left\lfloor \frac{t-1}{q} \right\rfloor,
\qquad \forall t\geq1. 
\end{align*}
It can be verified that
\begin{align*}
\vp_t q + 1 \leq t \leq (\vp_t + 1)q,
\qquad \forall t\geq1. 
\end{align*}
With the help of the above notations, we recursively apply~\eqref{v2} from~$t$ to $(\vp_t q + 2)$ to obtain: for all $t\geq1$ such that $\bmod(t,q) \neq 1$, 
\begin{align}\label{v3}
\E\big[\delta_t\big]
\leq \frac{L^2}{n^2b}\sum_{j=\vp_tq+2}^{t}\E\Big[\big\|\mb{x}_j - \mb{x}_{j-1}\big\|^2\Big] 
+ \E\big[\delta_{\vp_t q+1}\big].
\end{align}
Summing up~\eqref{v3}, we observe that $\forall z\geq1$,
\begin{align}\label{v4}
\sum_{t=(z-1)q+1}^{zq}\E\big[\delta_t\big]
\leq&~\sum_{t=(z-1)q+2}^{zq}\left(\frac{L^2}{n^2b}\sum_{j=\vp_tq+2}^{t}\E\Big[\big\|\mb{x}_j - \mb{x}_{j-1}\big\|^2\Big]
+ \E\big[\delta_{\vp_t q+1}\big]\right) 
+ \E\big[\delta_{(z-1)q+1}\big]       \nonumber\\
=&~\frac{L^2}{n^2b}\sum_{t=(z-1)q+2}^{zq}\sum_{j=(z-1)q+2}^{t}\E\Big[\big\|\mb{x}_j - \mb{x}_{j-1}\big\|^2\Big]
+ q\E\big[\delta_{(z-1)q+1}\big] \nonumber\\
\leq&~\frac{L^2}{n^2b}\sum_{t=(z-1)q+2}^{zq}\sum_{j=(z-1)q+2}^{zq}\E\Big[\big\|\mb{x}_j - \mb{x}_{j-1}\big\|^2\Big]
+ q\E\big[\delta_{(z-1)q+1}\big] \nonumber\\
=&~\frac{L^2(q-1)}{n^2b}\sum_{j=(z-1)q+2}^{zq}\E\Big[\big\|\mb{x}_j - \mb{x}_{j-1}\big\|^2\Big]
+ q\E\big[\delta_{(z-1)q+1}\big],
\end{align}
where the second line uses the fact that~$\vp_t = z-1$ when $(z-1)q+1\leq t\leq zq$ for all $z\geq1$. Finally, we sum up~\eqref{v4} over $z$ from~$1$ to $R$, we obtain: $\forall R\geq1$,
\begin{align}\label{v5}
\sum_{z=1}^{R}\sum_{t=(z-1)q+1}^{zq}\E\big[\delta_t\big]
\leq
\frac{L^2(q-1)}{n^2b}\sum_{z=1}^{R}\sum_{j=(z-1)q+2}^{zq}\E\Big[\big\|\mb{x}_j - \mb{x}_{j-1}\big\|^2\Big]
+ q\sum_{z=1}^{R}\E\big[\delta_{(z-1)q+1}\big].
\end{align}
Recall that $T = Eq$ and from~\eqref{v5} we obtain that $\forall T\geq q$,
\begin{align}\label{v6}
\sum_{t=1}^T\E\big[\delta_t\big]
\leq
\frac{L^2(q-1)}{n^2b}\sum_{t=2}^T\E\Big[\big\|\mb{x}_t - \mb{x}_{t-1}\big\|^2\Big]
+ q\sum_{z=1}^{R}\E\big[\delta_{(z-1)q+1}\big].
\end{align}
Finally, we apply Lemma~\ref{lem_x_diff} to~\eqref{v6} to obtain:~$\forall T\geq q$,
\begin{align*}
\sum_{t=1}^T\E\big[\delta_t\big]
\leq&~\frac{6L^2(q-1)}{n^2b}\sum_{t=1}^T\E\big[\|\mb{x}_t - \J\mb{x}_{t}\|^2\big]
+ \frac{3L^2(q-1)\a^2}{nb}\sum_{t=1}^{T-1}\E\Big[\big\|\ol{\sgp}_{t}\big\|^2\Big] 
+ q\sum_{z=1}^{R}\E\big[\delta_{(z-1)q+1}\big],
\end{align*}
which finishes the proof.
\end{proof}

\subsubsection{Proof of Lemma~\ref{lem_VR}(\ref{lem_VR_O})}
Lemma~\ref{lem_VR}(\ref{lem_VR_E}) follows by applying Lemma~\ref{lem_VR}\eqref{lem_VR_M} to Lemma~\ref{lem_VR_SARAH}, i.e.,
$\E[\delta_{(z-1)q+1}] \leq \frac{\nu^2}{nB}$ for all $z\geq1$.

\subsubsection{Proof of Lemma~\ref{lem_VR}(\ref{lem_VR_E})}
Lemma~\ref{lem_VR}(\ref{lem_VR_O}) follows from Lemma~\ref{lem_VR_SARAH} by $\delta_{(z-1)q+1} = 0$ for all $z\geq1$.

\section{Proof of Lemma~\ref{lem_GT}}\label{app_proof_GT_lem}
We first present a simple result that is useful for our later development.
\begin{prop}
Consider the iterates generated by Algorithm~\ref{PGT}.
The following inequality holds:~$\forall t\geq1$,
\begin{align}\label{GT_0}
\|\y_{t+1} -\mb{J}\y_{t+1}\|^2 
\leq \lambda^2\| \mb{y}_{t}-\mb{J}\mb{y}_{t}\|^2
+ \lambda^2\|\mb{v}_{t} - \mb{v}_{t-1}\|^2 
+ 2\big\langle\mb{W}^K\mb{y}_{t}-\mb{J}\mb{y}_{t},\big(\mb{W}^K -\mb{J}\big)\big(\mb{v}_{t} - \mb{v}_{t-1}\big)\big\rangle.
\end{align}
\end{prop}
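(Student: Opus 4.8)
The plan is to expand the consensus error of the tracking variable directly from its update rule and isolate the term that is intended to be handled later by conditional expectation. First I would recall the $\y$-update $\mb{y}_{t+1} = \W^K(\mb{y}_t + \mb{v}_t - \mb{v}_{t-1})$ in Algorithm~\ref{PGT} and invoke $\J\W^K = \J$ from Lemma~\ref{lem_weight}\eqref{W_equal} to compute $\J\mb{y}_{t+1} = \J(\mb{y}_t + \mb{v}_t - \mb{v}_{t-1})$. Subtracting these two expressions yields the compact identity
\begin{align*}
\mb{y}_{t+1} - \J\mb{y}_{t+1} = \big(\W^K - \J\big)\big(\mb{y}_t + \mb{v}_t - \mb{v}_{t-1}\big),
\end{align*}
which I would then write as $\mb{a} + \mb{b}$ with $\mb{a} := (\W^K-\J)\mb{y}_t = \W^K\mb{y}_t - \J\mb{y}_t$ and $\mb{b} := (\W^K-\J)(\mb{v}_t - \mb{v}_{t-1})$.

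The second step is to expand $\|\mb{a}+\mb{b}\|^2 = \|\mb{a}\|^2 + \|\mb{b}\|^2 + 2\langle\mb{a},\mb{b}\rangle$ and to bound only the two squared-norm terms while leaving the inner product untouched. For $\|\mb{a}\|^2$ I would apply the contraction bound in Lemma~\ref{lem_weight}\eqref{W_contract} with $\x = \mb{y}_t$, giving $\|\W^K\mb{y}_t - \J\mb{y}_t\| \le \ltrue^K\|\mb{y}_t - \J\mb{y}_t\| = \lambda\|\mb{y}_t - \J\mb{y}_t\|$, hence $\|\mb{a}\|^2 \le \lambda^2\|\mb{y}_t - \J\mb{y}_t\|^2$. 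For $\|\mb{b}\|^2$ I would use submultiplicativity of the spectral norm together with $\|\W^K - \J\| = \ltrue^K = \lambda$ from Lemma~\ref{lem_weight}\eqref{W_spectral}, which gives $\|\mb{b}\|^2 \le \lambda^2\|\mb{v}_t - \mb{v}_{t-1}\|^2$. Since $\mb{a} = \W^K\mb{y}_t - \J\mb{y}_t$ by construction, the cross term is exactly $2\langle \W^K\mb{y}_t - \J\mb{y}_t,\, (\W^K - \J)(\mb{v}_t - \mb{v}_{t-1})\rangle$, so the three pieces match the claimed inequality term for term.

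The computation itself is routine linear algebra; the only genuine decision is to refrain from bounding the cross term at this stage. It is deliberately retained in exact form so that the subsequent proof of Lemma~\ref{lem_GT} can treat it flexibly — e.g.\ via conditional expectation to exploit the randomness carried by $\mb{v}_t - \mb{v}_{t-1}$, or via a Young's inequality with a parameter later tuned to the network quantities — rather than committing to a fixed split here, which would forfeit cancellation and degrade the final tracking-error bound. Accordingly, the contraction estimate is applied only to the $\mb{y}_t$ factor, while the increment factor is kept verbatim inside the inner product, which is precisely what produces the stated inequality.
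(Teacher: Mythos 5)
Your proposal is correct and follows essentially the same route as the paper: both subtract $\J\y_{t+1}=\J(\y_t+\mb{v}_t-\mb{v}_{t-1})$ (via Lemma~\ref{lem_weight}\eqref{W_equal}) from the $\y$-update, expand the resulting squared norm into two squares plus a cross term, bound the squares by $\lambda^2\|\y_t-\J\y_t\|^2$ and $\lambda^2\|\mb{v}_t-\mb{v}_{t-1}\|^2$ using Lemma~\ref{lem_weight}, and keep the inner product in exact form. The only cosmetic difference is which part of Lemma~\ref{lem_weight} you cite for the $\mb{v}$-increment term (spectral norm plus submultiplicativity versus the contraction statement), which is immaterial.
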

\begin{proof}
Using the $\mb{y}$-update in Algorithm~\ref{PGT} and Lemma~\ref{lem_weight}(\ref{W_equal}), we have:~$\forall t\geq1$,
\begin{align}
&\|\y_{t+1} -\mb{J}\y_{t+1}\|^2  \nonumber\\
=&~\big\| \mb{W}^K\big(\mb{y}_{t} + \mb{v}_{t} - \mb{v}_{t-1}\big) -\mb{J}\mb{W}^K\big(\mb{y}_{t} + \mb{v}_{t} - \mb{v}_{t-1}\big)\big\|^2 \nonumber\\
=&~\big\| \mb{W}^K\mb{y}_{t}-\mb{J}\mb{y}_{t} + \big(\mb{W}^K -\mb{J}\big)\big(\mb{v}_{t} - \mb{v}_{t-1}\big)\big\|^2 \nonumber\\
=&~\big\|\mb{W}^K\mb{y}_{t}-\mb{J}\mb{y}_{t}\big\|^2 +
\big\|\big(\mb{W}^K -\mb{J}\big)\big(\mb{v}_{t} - \mb{v}_{t-1}\big)\big\|^2 
+ 2\big\langle \mb{W}^K\mb{y}_{t} - \mb{J}\mb{y}_{t}, \big(\mb{W}^K -\mb{J}\big)\big(\mb{v}_{t} - \mb{v}_{t-1}\big) \big\rangle,\nonumber
\end{align}
and the proof follows by using Lemma~\ref{lem_weight}(\ref{W_contract}).
\end{proof}

\subsection{Proof of Lemma~\ref{lem_GT}(\ref{lem_GT_M})}
\subsubsection{Step 1: Decomposition}
Recall that we are concerned with Algorithm~\ref{PGT_SA} in this section. Conditioning~\eqref{GT_0} on~$\F_t$, we have:~$\forall t\geq2$, 
\begin{align}\label{SA_GT_0}
&\E\big[\|\y_{t+1} -\mb{J}\y_{t+1}\|^2|\F_t\big] \nonumber\\
\leq&~\lambda^2\| \mb{y}_{t}-\mb{J}\mb{y}_{t}\|^2
+ \lambda^2\E\big[\|\mb{v}_{t} - \mb{v}_{t-1}\|^2|\F_t\big]  
+ 2\big\langle\mb{W}^K\mb{y}_{t}-\mb{J}\mb{y}_{t},\big(\mb{W}^K -\mb{J}\big)\big(\nf(\x_t) - \mb{v}_{t-1}\big)\big\rangle \nonumber\\
=&~\lambda^2\| \mb{y}_{t}-\mb{J}\mb{y}_{t}\|^2
+ \lambda^2\E\big[\|\mb{v}_{t} - \mb{v}_{t-1}\|^2|\F_t\big]  
+ 2\big\langle\mb{W}^K\mb{y}_{t}-\mb{J}\mb{y}_{t},\big(\mb{W}^K -\mb{J}\big)\big(\nf(\x_{t-1}) - \mb{v}_{t-1}\big)\big\rangle
\nonumber\\
&+ 2\big\langle\mb{W}^K\mb{y}_{t}-\mb{J}\mb{y}_{t},\big(\mb{W}^K -\mb{J}\big)\big(\nf(\x_t) - \nf(\x_{t-1})\big)\big\rangle
\nonumber\\
=&~\lambda^2\| \mb{y}_{t}-\mb{J}\mb{y}_{t}\|^2
+ \lambda^2\E\big[\|\mb{v}_{t} - \mb{v}_{t-1}\|^2|\F_t\big]  
+ 2\big\langle\mb{W}^K\mb{y}_{t},\big(\mb{W}^K -\mb{J}\big)\big(\nf(\x_{t-1}) - \mb{v}_{t-1}\big)\big\rangle
\nonumber\\
&+ \underbrace{2\big\langle\mb{W}^K\mb{y}_{t}-\mb{J}\mb{y}_{t},\big(\mb{W}^K -\mb{J}\big)\big(\nf(\x_t) - \nf(\x_{t-1})\big)\big\rangle}_{=:A_t},
\end{align}
where the first line uses the fact that~$\x_t$, $\y_t$, $\mb{v}_{t-1}$ are $\F_t$-measurable and also Assumption~\ref{asp_unbias}, while the last line uses Lemma~\ref{lem_weight}\eqref{W_equal}. Towards the last term in~\eqref{SA_GT_0}, we observe that~$\forall t\geq2$ and $\forall\eta>0$,
\begin{align}\label{Term1}
A_t 
\leq&~2\big\|\mb{W}^K\mb{y}_{t}-\mb{J}\mb{y}_{t}\big\| \big\|\big(\mb{W}^K -\mb{J}\big)\big(\nf(\x_t) - \nf(\x_{t-1})\big)\big\|     \nonumber\\
\leq&~2\lambda\|\mb{y}_{t}-\mb{J}\mb{y}_{t}\| \lambda\|\nf(\x_t) - \nf(\x_{t-1})\| \nonumber\\
\leq&~2\lambda\|\mb{y}_{t}-\mb{J}\mb{y}_{t}\| \lambda L\|\x_t - \x_{t-1}\| \nonumber\\
\leq&~\eta\lambda^2\|\mb{y}_{t}-\mb{J}\mb{y}_{t}\|^2 + \eta^{-1}\lambda^2 L^2\|\x_t - \x_{t-1}\|^2,
\end{align}
where the first line uses the Cauchy-Schwarz inequality, the second line uses Lemma~\ref{lem_weight}, the third line uses the~$L$-smoothness of each~$f_i$, and last the line uses Young's inequality. Combining~\eqref{Term1} and~\eqref{SA_GT_0} leads to the following:~$\forall t\geq2$,
\begin{align}\label{SA_GT_1}
\E\big[\|\y_{t+1} -\mb{J}\y_{t+1}\|^2|\F_t\big] 
\leq&~(1+\eta)\lambda^2\| \mb{y}_{t}-\mb{J}\mb{y}_{t}\|^2
+ \eta^{-1}\lambda^2 L^2\|\x_t - \x_{t-1}\|^2 \nonumber\\
&+ \lambda^2\underbrace{\E\big[\|\mb{v}_{t} - \mb{v}_{t-1}\|^2|\F_t\big]}_{=:B_t}  
+ 2\underbrace{\big\langle\mb{W}^K\mb{y}_{t},\big(\mb{W}^K -\mb{J}\big)\big(\nf(\x_{t-1}) - \mb{v}_{t-1}\big)\big\rangle}_{=:C_t}.
\end{align}
In the following, we bound~$B_t$ and~$C_t$ in~\eqref{SA_GT_1} respectively.

\subsubsection{Step 2: Controlling~\texorpdfstring{$B_t$}{lg}}
We decompose~$B_t$ as follows:~$\forall t\geq2$,
\begin{align}\label{Bt_0}
B_t
=&~\E\big[\|\mb{v}_{t} - \nf(\x_t) + \nf(\x_t) - \mb{v}_{t-1}\|^2|\F_t\big]
\nonumber\\
=&~\E\big[\|\mb{v}_{t} - \nf(\x_t)\|^2|\F_t\big] 
+ \|\nf(\x_t) - \mb{v}_{t-1}\|^2 
\nonumber\\
\leq&~\E\big[\|\mb{v}_{t} - \nf(\x_t)\|^2|\F_t\big] 
+ 2\|\nf(\x_t) - \nf(\x_{t-1})\|^2 
+ 2\|\nf(\x_{t-1}) - \mb{v}_{t-1}\|^2 
\nonumber\\
\leq&~\E\big[\|\mb{v}_{t} - \nf(\x_t)\|^2|\F_t\big] 
+ 2L^2\|\x_t - \x_{t-1}\|^2 
+ 2\|\nf(\x_{t-1}) - \mb{v}_{t-1}\|^2, 
\end{align}
where the first line utilizes Assumption~\ref{asp_unbias} and the fact that $\nf(\x_t)$ and~$\mb{v}_{t-1}$ are $\F_t$-measurable, while the last line uses the~$L$-smoothness of each~$f_i$. To proceed, we note that~$\forall t\geq1$,
\begin{align}\label{Bt_01}
\E\big[\|\mb{v}_{t} - \nf(\x_t)\|^2|\F_t\big]    
=&~\sum_{i=1}^n\E\left[\left\|\frac{1}{b}\sum_{s=1}^b\n G_i(\x_t^i,\X_{i,s}^t) - \n f_i(\x_t^i)\right\|^2\bigg|\F_t\right]   \nonumber\\
=&~\frac{1}{b^2}\sum_{i=1}^n\sum_{s=1}^b\E\Big[\big\|\n G_i(\x_t^i,\X_{i,s}^t) - \n f_i(\x_t^i)\big\|^2\big|\F_t\Big] 
\leq \frac{n\nu^2}{b},
\end{align}
where the second line uses the fact that~$\x_t^i$ is~$\F_t$-measurable and~$\{\X_{i,1}^t,\cdots,\X_{i,b}^t,\F_t\}$ is an independent family for all $i\in\mc{V}$. Combining~\eqref{Bt_0} and~\eqref{Bt_01}, we conclude that
\begin{align}\label{Bt_bound}
\E\big[B_t\big]
\leq 
2L^2\E\big[\|\x_t - \x_{t-1}\|^2\big]
+ \frac{3n\nu^2}{b},
\qquad\forall t\geq2. 
\end{align}

\subsubsection{Step 3: Controlling~\texorpdfstring{$C_t$}{lg}}
Towards~$C_t$, we observe that~$\forall t\geq2$,
\begin{align}
\E[C_t|\F_{t-1}]\label{Ct_0}
=&~\E\Big[\Big\langle\mb{W}^{2K}\big(\mb{y}_{t-1} + \mb{v}_{t-1} - \mb{v}_{t-2}\big),\big(\mb{W}^K -\mb{J})(\nf(\x_{t-1}) - \mb{v}_{t-1}\big)\Big\rangle\big|\F_{t-1}\Big]   \nonumber\\
=&~\E\Big[\Big\langle\mb{W}^{2K}\mb{v}_{t-1},\big(\mb{W}^K -\mb{J}\big)\big(\nf(\x_{t-1}) - \mb{v}_{t-1}\big)\Big\rangle\big|\F_{t-1}\Big]
\nonumber\\
=&~\E\Big[\Big\langle\mb{W}^{2K}\big(\mb{v}_{t-1}-\nf(\x_{t-1})\big),\big(\mb{J}-\mb{W}^K\big)\big(\mb{v}_{t-1} - \nf(\x_{t-1})\big)\Big\rangle\big|\F_{t-1}\Big],
\end{align}
where the first line uses the~$\y$-update in Algorithm~\ref{PGT}, while the second and the last line use Assumption~\ref{asp_unbias} with the~$\F_{t-1}$-measurability of $\mb{y}_{t-1}$, $\mb{v}_{t-2}$ and $\nf(\x_{t-1})$. To proceed, note that for all~$t\geq1$ we have
\begin{align}\label{ip_indep}
\E\Big[\Big\langle \mb{v}_t^i - \n f_i(
\x_t^i), \mb{v}_t^r - \n f_i(
\x_t^r)\Big\rangle\big|\F_{t}\Big]  
= 0,
\end{align}
whenever~$i\neq r$. In light of~\eqref{ip_indep}, we proceed from~\eqref{Ct_0} as follows:~$\forall t\geq2$,
\begin{align}\label{Ct_1}
\E[C_t|\F_{t-1}]
=&~\E\Big[\big(\mb{v}_{t-1}-\nf(\x_{t-1})\big)^\top\big(\mb{J}-(\W^{K})^\top\mb{W}^{2K}\big)\big(\mb{v}_{t-1} - \nf(\x_{t-1})\big)\big|\F_{t-1}\Big], \nonumber\\
=&~\E\Big[\big(\mb{v}_{t-1}-\nf(\x_{t-1})\big)^\top\mbox{diag}\big(\J-(\W^{\top})^K\mb{W}^{2K}\big)\big(\mb{v}_{t-1} - \nf(\x_{t-1})\big)\big|\F_{t-1}\Big] \nonumber\\
\leq&~\E\Big[\big(\mb{v}_{t-1}-\nf(\x_{t-1})\big)^\top\mbox{diag}(\J)\big(\mb{v}_{t-1} - \nf(\x_{t-1})\big)\big|\F_{t-1}\Big]
\nonumber\\
=&~\frac{1}{n}\E\big[\|\mb{v}_{t-1} - \nf(\x_{t-1})\|^2|\F_{t-1}\big]
\nonumber\\
\leq&~\frac{\nu^2}{b},
\end{align}
where the first line uses Lemma~\ref{lem_weight}\eqref{W_equal}, the second line uses~\eqref{ip_indep}, the third line uses the entry-wise nonnegativity of~$\W$, and the last line uses~\eqref{Bt_01}.
Therefore, we conclude from~\eqref{Ct_1} that
\begin{align}\label{Ct_bound}
\E[C_t] \leq \frac{\nu^2}{b}, \qquad\forall t\geq2.  
\end{align}

\subsubsection{Step 4: Putting Bounds Together and Refining}
We substitute~\eqref{Bt_bound} and~\eqref{Ct_bound} into~\eqref{SA_GT_1} to obtain:~$\forall t\geq2$,
\begin{align}\label{SA_GT_2}
\E\big[\|\y_{t+1} -\mb{J}\y_{t+1}\|^2\big] 
\leq&~(1+\eta)\lambda^2\E\big[\| \mb{y}_{t}-\mb{J}\mb{y}_{t}\|^2\big]
+ (\eta^{-1}+2)\lambda^2 L^2\E\big[\|\x_t - \x_{t-1}\|^2\big] + (3\lambda^2n+2)\nu^2/b.
\end{align}
Setting~$\eta = \frac{1-\lambda^2}{2\lambda^2}$, we have:~$\forall t\geq2$,
\begin{align}\label{SA_GT_3}
\E\big[\|\y_{t+1} -\mb{J}\y_{t+1}\|^2\big] 
\leq&~\frac{1+\lambda^2}{2}\E\big[\| \mb{y}_{t}-\mb{J}\mb{y}_{t}\|^2\big]
+ \frac{2\lambda^2 L^2}{1-\lambda^2}\E\big[\|\x_t - \x_{t-1}\|^2\big] + \frac{(3\lambda^2n+2)\nu^2}{b}.
\end{align}
We then apply Lemma~\ref{lem_acc} to~\eqref{SA_GT_3} to obtain:~$\forall T\geq2$,
\begin{align}\label{SA_GT_5}
\sum_{t=2}^{T+1}\E\big[\|\y_{t} -\mb{J}\y_{t}\|^2\big] 
\leq&~\frac{2\E\big[\| \mb{y}_{2}-\mb{J}\mb{y}_{2}\|^2\big]}{1-\lambda^2}
+ \frac{4\lambda^2 L^2}{(1-\lambda^2)^2}\sum_{t=2}^{T}\E\big[\|\x_t - \x_{t-1}\|^2\big] + \frac{2(T-1)(3\lambda^2n+2)\nu^2}{b(1-\lambda^2)}. 
\end{align}
Since~$\y_1 = \mb{v}_0 = \mb{0}_{np}$, we have
\begin{align}\label{SA_GT_init}
\E\big[\| \mb{y}_{2}-\mb{J}\mb{y}_{2}\|^2\big]    
= \E\big[\|(\W^K-\J)\mb{v}_1\|^2\big]  
\leq \lambda^2\E\big[\|\mb{v}_1\|^2\big]  
=&~\lambda^2\|\nf(\x_1)\|^2
+ \lambda^2\E\big[\|\mb{v}_1 - \nf(\x_1)\|^2\big]  \nonumber\\
\leq&~\lambda^2\|\nf(\x_1)\|^2 + \lambda^2n\nu^2/b,
\end{align}
where the second line uses Lemma~\ref{lem_weight}\eqref{W_spectral}, the third line uses Lemma~\ref{asp_unbias}, and the last line is due to~\eqref{Bt_01}. Finally, we apply~\eqref{SA_GT_init} to~\eqref{SA_GT_5} to obtain:~$\forall T\geq2$,
\begin{align*}
\sum_{t=2}^{T+1}\E\big[\|\y_{t} -\mb{J}\y_{t}\|^2\big] 
\leq \frac{2\lambda^2n\zeta^2}{1-\lambda^2}
+ \frac{4\lambda^2 L^2}{(1-\lambda^2)^2}\sum_{t=2}^{T}\E\big[\|\x_t - \x_{t-1}\|^2\big] + \frac{2T(3\lambda^2n+2)\nu^2}{b(1-\lambda^2)}
+ \frac{2\lambda^2n\nu^2}{b(1-\lambda^2)}.
\end{align*}
The proof of Lemma~\ref{lem_GT}\eqref{lem_GT_M} follows by applying Lemma~\ref{lem_x_diff} to the above inequality with minor manipulations.

\subsection{Proof of Lemma~\ref{lem_GT}(\ref{lem_GT_O}) and~\ref{lem_GT}(\ref{lem_GT_E})}
We first establish a gradient tracking error bound that is applicable to both Algorithm~\ref{PGT_SR_O} and~\ref{PGT_SR_E}.
For ease of exposition, we denote 
\begin{align}\label{def_vr_sum}
\Upsilon_t := \|\mb{v}_t - \nf(\x_t) \|^2,
\qquad
\forall t\geq1.
\end{align}
\begin{lem}
Let Assumption~\ref{asp_mss} hold. Suppose that $T = Rq$ for some $R\in\mathbb{Z}^{+}$.
Consider the iterates generated by Algorithm~\ref{PGT_SR_O} or~\ref{PGT_SR_E}.  
Then we have: $\forall T\geq 2q$,
\begin{align}\label{GT_VR_V}
\sum_{t=2}^{T+1}\E\big[\|\y_{t} -\mb{J}\y_{t}\|^2\big] 
\leq&~\frac{2\lambda^2n\zeta^2}{1-\lambda^2}
+ \frac{96\lambda^2L^2}{(1-\lambda^2)^2}\sum_{t=1}^{T}\E\big[\|\x_t - \J\x_{t}\|^2\big]
+ \frac{48\lambda^2n\a^2L^2}{(1-\lambda^2)^2}\sum_{t=1}^{T-1}\E\big[\|\ol{\sgp}_{t}\|^2\big]
\nonumber\\
&+ \frac{14\lambda^2}{(1-\lambda^2)^2} \sum_{z=0}^{R-1}\E\big[\Upsilon_{zq+1}\big].
\end{align}
\end{lem}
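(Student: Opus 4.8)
The plan is to follow the martingale-type argument used for \texttt{ProxGT-SA} in the proof of Lemma~\ref{lem_GT}\eqref{lem_GT_M}, while exploiting the recursive \texttt{SARAH} structure of $\mb{v}_t$ so that the stochastic error enters only through the periodic reset points $\Upsilon_{zq+1}$. I would start from the generic one-step tracking recursion~\eqref{GT_0} and condition it on $\F_t$. Using Assumption~\ref{asp_unbias}, the cross term on the right of~\eqref{GT_0} becomes $2\langle \W^K\y_t - \J\y_t, (\W^K-\J)(\E[\mb{v}_t\mid\F_t]-\mb{v}_{t-1})\rangle$, and the key observation is that for a non-restart step (i.e.\ $t\bmod q\neq1$) the \texttt{SARAH} correction satisfies $\E[\mb{v}_t\mid\F_t]-\mb{v}_{t-1} = \nf(\x_t)-\nf(\x_{t-1})$, so \emph{no} variance term survives. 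Bounding this cross term by Lemma~\ref{lem_weight}, $L$-smoothness and Young's inequality with parameter $\eta$, and bounding $\E[\|\mb{v}_t-\mb{v}_{t-1}\|^2\mid\F_t]\le L^2\|\x_t-\x_{t-1}\|^2$ via Assumption~\ref{asp_mss} (the $b$ i.i.d.\ samples reduce the second moment of the average to that of a single increment), a non-restart step yields
\begin{align*}
\E\big[\|\y_{t+1}-\J\y_{t+1}\|^2\mid\F_t\big]
\le (1+\eta)\lambda^2\|\y_t-\J\y_t\|^2 + (1+\eta^{-1})\lambda^2 L^2\|\x_t-\x_{t-1}\|^2.
\end{align*}
Choosing $\eta=\frac{1-\lambda^2}{2\lambda^2}$ collapses the $\|\y_t-\J\y_t\|^2$ coefficient to $\frac{1+\lambda^2}{2}<1$, which is exactly the setting for Lemma~\ref{lem_acc}.

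At a restart step $t=zq+1$ we instead have $\E[\mb{v}_t\mid\F_t]=\nf(\x_t)$, so both the quadratic term $\lambda^2\E[\|\mb{v}_t-\mb{v}_{t-1}\|^2\mid\F_t]$ and the cross term are governed by $\nf(\x_t)-\mb{v}_{t-1}$. I would split $\mb{v}_t-\mb{v}_{t-1}=(\mb{v}_t-\nf(\x_t))+(\nf(\x_t)-\nf(\x_{t-1}))+(\nf(\x_{t-1})-\mb{v}_{t-1})$ and use $\|\nf(\x_t)-\mb{v}_{t-1}\|^2\le 2L^2\|\x_t-\x_{t-1}\|^2+2\Upsilon_{t-1}$, producing the reset variance $\Upsilon_t=\Upsilon_{zq+1}$ together with the \emph{end-of-phase} variance $\Upsilon_{t-1}=\Upsilon_{zq}$ and an $L^2\|\x_t-\x_{t-1}\|^2$ term. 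Applying Lemma~\ref{lem_acc} to the resulting recursion then gives
\begin{align*}
\sum_{t=2}^{T+1}\E\big[\|\y_t-\J\y_t\|^2\big]
\lesssim \frac{1}{1-\lambda^2}\E\big[\|\y_2-\J\y_2\|^2\big]
+\frac{\lambda^2 L^2}{(1-\lambda^2)^2}\sum_{t=2}^{T}\E\big[\|\x_t-\x_{t-1}\|^2\big]
+\frac{\lambda^2}{(1-\lambda^2)^2}\sum_z\E\big[\Upsilon_{zq}+\Upsilon_{zq+1}\big],
\end{align*}
and the initial term is controlled by $\y_1=\mb{v}_0=\mb{0}_{np}$, whence $\y_2=\W^K\mb{v}_1$ and $\E[\|\y_2-\J\y_2\|^2]\le\lambda^2\E[\|\mb{v}_1\|^2]=\lambda^2(n\zeta^2+\E[\Upsilon_1])$, producing the $\frac{2\lambda^2 n\zeta^2}{1-\lambda^2}$ term and the $z=0$ reset summand.

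To remove the end-of-phase terms $\Upsilon_{zq}$, which are not reset values, I would establish the non-averaged analogue of the variance recursion~\eqref{v2}: the conditional variance decomposition together with Assumption~\ref{asp_mss} gives $\E[\Upsilon_t\mid\F_t]\le\frac{L^2}{b}\|\x_t-\x_{t-1}\|^2+\Upsilon_{t-1}$ for every non-restart step. Telescoping this within phase $z$ bounds $\E[\Upsilon_{zq}]$ by $\E[\Upsilon_{(z-1)q+1}]+\frac{L^2}{b}\sum_{j=(z-1)q+2}^{zq}\E\|\x_j-\x_{j-1}\|^2$, so summing over $z$ folds every $\Upsilon_{zq}$ back onto a reset point $\Upsilon_{(z-1)q+1}$ at the cost of an additional $\frac{L^2}{b}\sum\|\x_t-\x_{t-1}\|^2\le L^2\sum\|\x_t-\x_{t-1}\|^2$ (since $b\ge1$). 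Finally I would invoke Lemma~\ref{lem_x_diff} to replace $\sum\|\x_t-\x_{t-1}\|^2$ by $6\sum\|\x_t-\J\x_t\|^2+3n\a^2\sum\|\ol{\sgp}_t\|^2$; collecting the coefficients produces the stated $\frac{96\lambda^2 L^2}{(1-\lambda^2)^2}$ and $\frac{48\lambda^2 n\a^2 L^2}{(1-\lambda^2)^2}$, and gathering all reset contributions into $\frac{14\lambda^2}{(1-\lambda^2)^2}\sum_{z=0}^{R-1}\E[\Upsilon_{zq+1}]$.

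The main obstacle is the bookkeeping around the restart iterations: there the increment $\mb{v}_t-\mb{v}_{t-1}$ straddles two \texttt{SARAH} phases, so its error is controlled by the end-of-phase quantity $\Upsilon_{zq}$ rather than by a reset value, and one must pass through the auxiliary $\Upsilon$-recursion to telescope these back to $\Upsilon_{(z-1)q+1}$ while keeping the accumulated $L^2\|\x_t-\x_{t-1}\|^2$ constants inside the budget that yields $96$, $48$, and $14$ after Lemma~\ref{lem_x_diff}. Everything else is routine: Young's inequality, the contraction estimates of Lemma~\ref{lem_weight}, and the mean-squared smoothness of Assumption~\ref{asp_mss}.
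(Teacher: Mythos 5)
Your proposal is correct and follows essentially the same route as the paper's proof: the same split into non-restart and restart steps, the same decompositions $\|\mb{v}_t-\mb{v}_{t-1}\|^2\le 3\Upsilon_t+3L^2\|\x_t-\x_{t-1}\|^2+3\Upsilon_{t-1}$ and $\|\nf(\x_t)-\mb{v}_{t-1}\|^2\le 2L^2\|\x_t-\x_{t-1}\|^2+2\Upsilon_{t-1}$, the same within-phase recursion $\E[\Upsilon_t]\le\frac{L^2}{b}\E\|\x_t-\x_{t-1}\|^2+\E[\Upsilon_{t-1}]$ to fold the end-of-phase terms $\Upsilon_{zq}$ back onto the reset values, and the same final application of Lemma~\ref{lem_acc} and Lemma~\ref{lem_x_diff} with matching constants. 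The only cosmetic difference is that for non-restart steps you invoke conditional unbiasedness to reduce the cross term to $\nf(\x_t)-\nf(\x_{t-1})$, whereas the paper bounds it deterministically via Cauchy--Schwarz on $\mb{v}_t-\mb{v}_{t-1}$; both yield the identical one-step inequality.
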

\begin{proof}
We first recall from~\eqref{GT_0} that $\forall t\geq1$,
\begin{align}\label{GT_VR_0}
\|\y_{t+1} -\mb{J}\y_{t+1}\|^2 
\leq \lambda^2\| \mb{y}_{t}-\mb{J}\mb{y}_{t}\|^2
+ \lambda^2\|\mb{v}_{t} - \mb{v}_{t-1}\|^2 
+ 2\big\langle\mb{W}^K\mb{y}_{t}-\mb{J}\mb{y}_{t},\big(\mb{W}^K -\mb{J}\big)\big(\mb{v}_{t} - \mb{v}_{t-1}\big)\big\rangle.
\end{align}
In the first two steps, we refine~\eqref{GT_VR_0} for $\bmod(t,q) \neq1$ and $\bmod(t,q) = 1$ respectively. 

\vspace{0.1cm}
\noindent
\textbf{Step 1: consider any $t\geq2$ such that $\bmod(t,q) \neq1$.}
From~\eqref{GT_VR_0}, we observe that for all $\eta>0$,
\begin{align}\label{GT_VR_I_0}
\|\y_{t+1} -\mb{J}\y_{t+1}\|^2 
\leq&~\lambda^2\| \mb{y}_{t}-\mb{J}\mb{y}_{t}\|^2
+ \lambda^2\|\mb{v}_{t} - \mb{v}_{t-1}\|^2 
+ 2\big\|\mb{W}^K\mb{y}_{t}-\mb{J}\mb{y}_{t}\big\|\big\|\big(\mb{W}^K -\mb{J}\big)\big(\mb{v}_{t} - \mb{v}_{t-1}\big)\big\| 
\nonumber\\
\leq&~\lambda^2\| \mb{y}_{t}-\mb{J}\mb{y}_{t}\|^2
+ \lambda^2\|\mb{v}_{t} - \mb{v}_{t-1}\|^2 
+ 2\lambda^2\big\|\mb{y}_{t}-\mb{J}\mb{y}_{t}\big\|\big\|\mb{v}_{t} - \mb{v}_{t-1}\big\| \nonumber\\
\leq&~\lambda^2(1+\eta)\| \mb{y}_{t}-\mb{J}\mb{y}_{t}\|^2
+ \lambda^2(1+\eta^{-1})\|\mb{v}_{t} - \mb{v}_{t-1}\|^2,
\end{align}
where the first line uses Cauchy-Schwarz inequality, the second line uses Lemma~\ref{lem_weight}, and the last uses Young's inequality.
Setting $\eta = \frac{1-\lambda^2}{2\lambda^2}$ in~\eqref{GT_VR_I_0} gives: 
\begin{align}\label{GT_VR_I_1}
\|\y_{t+1} -\mb{J}\y_{t+1}\|^2 
\leq \frac{1+\lambda^2}{2}\| \mb{y}_{t}-\mb{J}\mb{y}_{t}\|^2
+ \frac{\lambda^2(1+\lambda^2)}{1-\lambda^2}\|\mb{v}_{t} - \mb{v}_{t-1}\|^2
\end{align}
Note that
\begin{align}\label{vt_diff_I}
\E\big[\|\mb{v}_{t} - \mb{v}_{t-1}\|^2\big] 
=&~\sum_{i=1}^n\E\!\left[\left\|\frac{1}{b}\sum_{s=1}^{b}\Big(\n G_i(\x_t^i,\X_{i,s}^t) - \n G_i(\x_{t-1}^i,\X_{i,s}^t)\Big)\right\|^2\right]
\nonumber\\
\leq&~\frac{1}{b}\sum_{i=1}^n\sum_{s=1}^{b}\E\!\left[\left\|\n G_i(\x_t^i,\X_{i,s}^t) - \n G_i(\x_{t-1}^i,\X_{i,s}^t)\right\|^2\right]
\nonumber\\
\leq&~L^2\E\big[\|\x_t - \x_{t-1}\|^2\big],
\end{align}
where the last line uses the mean-squared smoothness.
Applying~\eqref{vt_diff_I} to~\eqref{GT_VR_I_1}, we obtain:
\begin{align}\label{GT_VR_I}
\E\big[\|\y_{t+1} -\mb{J}\y_{t+1}\|^2\big]
\leq \frac{1+\lambda^2}{2}\E\big[\| \mb{y}_{t}-\mb{J}\mb{y}_{t}\|^2\big]
+ \frac{2\lambda^2L^2}{1-\lambda^2}\E\big[\|\mb{x}_{t} - \mb{x}_{t-1}\|^2\big]
\end{align}

\noindent
\textbf{Step 2: consider any $t\geq2$ such that $\bmod(t,q) = 1$.}
In this case, we have $\E[\mb{v}_t|\F_t] = \nf(\x_t)$.
Taking the conditional expectation of~\eqref{GT_VR_0} with respect to the filtration $\F_t$, we obtain
\begin{align}\label{GT_VR_II_0}
\E\big[\|\y_{t+1} -\mb{J}\y_{t+1}\|^2|\F_t\big] 
\leq&~\lambda^2\| \mb{y}_{t}-\mb{J}\mb{y}_{t}\|^2
+ \lambda^2\E\big[\|\mb{v}_{t} - \mb{v}_{t-1}\|^2|\F_t\big]
\nonumber\\
&+ 2\big\langle\mb{W}^K\mb{y}_{t}-\mb{J}\mb{y}_{t},\big(\mb{W}^K -\mb{J}\big)\big(\nf(\x_t) - \mb{v}_{t-1}\big)\big\rangle
\nonumber\\
\leq&~\lambda^2(1+\eta)\| \mb{y}_{t}-\mb{J}\mb{y}_{t}\|^2
+ \lambda^2\E\big[\|\mb{v}_{t} - \mb{v}_{t-1}\|^2|\F_t\big]
+ \lambda^2\eta^{-1}\|\nf(\x_t) - \mb{v}_{t-1}\|^2,
\end{align}
where the first line uses the fact that~$\x_t$ and $\y_t$ are $\F_t$-measurable and the second line follows a similar line of arguments as in~\eqref{GT_VR_I_0}. Setting~$\eta = \frac{1-\lambda^2}{2\lambda^2}$ in~\eqref{GT_VR_II_0}, we obtain:
\begin{align}\label{GT_VR_II_1}
\E\big[\|\y_{t+1} -\mb{J}\y_{t+1}\|^2\big] 
\leq
\frac{1+\lambda^2}{2}\E\big[\| \mb{y}_{t}-\mb{J}\mb{y}_{t}\|^2\big]
+ \lambda^2\E\big[\|\mb{v}_{t} - \mb{v}_{t-1}\|^2\big]
+ \frac{2\lambda^4}{1-\lambda^2}\E\big[\|\nf(\x_t) - \mb{v}_{t-1}\|^2\big].
\end{align}
We recall the definition of $\Upsilon_t$ in~\eqref{def_vr_sum} and  observe that
\begin{align}\label{vt_diff_II}
\|\mb{v}_{t} - \mb{v}_{t-1}\|^2
=&~\|\mb{v}_{t} - \nf(\x_t) + \nf(\x_t) - \nf(\x_{t-1}) + \nf(\x_{t-1}) - \mb{v}_{t-1}\|^2 \nonumber\\
\leq&~3\Upsilon_t + 3\|\nf(\x_t) - \nf(\x_{t-1})\|^2  + 3\Upsilon_{t-1}
\nonumber\\
\leq&~3\Upsilon_t + 3L^2\|\x_t - \x_{t-1}\|^2  + 3\Upsilon_{t-1},
\end{align}
where the last uses the $L$-smoothness of each $f_i$. Similarly, we have
\begin{align}\label{GT_VR_II_1_aux}
\|\nf(\x_t) - \mb{v}_{t-1}\|^2
=&~\|\nf(\x_t) - \nf(\x_{t-1}) + \nf(\x_{t-1})  - \mb{v}_{t-1}\|^2
\nonumber\\
\leq&~2L^2\|\x_t - \x_{t-1}\|^2 + 2\Upsilon_{t-1}.
\end{align}
Plugging~\eqref{vt_diff_II} and~\eqref{GT_VR_II_1_aux} into~\eqref{GT_VR_II_1} gives
\begin{align}\label{GT_VR_II}
\E\big[\|\y_{t+1} -\mb{J}\y_{t+1}\|^2\big] 
\leq&~\frac{1+\lambda^2}{2}\E\big[\| \mb{y}_{t}-\mb{J}\mb{y}_{t}\|^2\big]
+ \left(3\lambda^2+ \frac{4\lambda^4}{1-\lambda^2}\right)
L^2\E\big[\|\x_{t} - \mb{x}_{t-1}\|^2\big]
\nonumber\\
&
+ 3\lambda^2\E\big[\Upsilon_{t}\big]
+ \left(3\lambda^2+ \frac{4\lambda^4}{1-\lambda^2}\right)
\E\big[\Upsilon_{t-1}\big]
 \nonumber\\
\leq&~\frac{1+\lambda^2}{2}\E\big[\| \mb{y}_{t}-\mb{J}\mb{y}_{t}\|^2\big]
+ \frac{4\lambda^2L^2}{1-\lambda^2}\E\big[\|\x_{t} - \mb{x}_{t-1}\|^2\big]
+ 3\lambda^2\E\big[\Upsilon_{t}\big]
+ \frac{4\lambda^2\E\big[\Upsilon_{t-1}\big]}{1-\lambda^2}.
\end{align}

\noindent
\textbf{Step 3: combining step 1 and step 2.}
Combining~\eqref{GT_VR_I} and~\eqref{GT_VR_II}, we obtain: $\forall t\geq2$,
\begin{align}\label{GT_VR_III_0}
\E\big[\|\y_{t+1} -\mb{J}\y_{t+1}\|^2\big] 
\leq&~\frac{1+\lambda^2}{2}\E\big[\| \mb{y}_{t}-\mb{J}\mb{y}_{t}\|^2\big]
+ \frac{4\lambda^2L^2}{1-\lambda^2}\E\big[\|\x_{t} - \mb{x}_{t-1}\|^2\big]
\nonumber\\
&
+ \i_{\{\!\bmod(t,q)=1\}}\left(3\lambda^2\E\big[\Upsilon_t\big]
+ \frac{4\lambda^2\E[\Upsilon_{t-1}]}{1-\lambda^2}\right).
\end{align}
Let~$T = Rq$ for some $R\in\mathbb{Z}^{+}$. We apply Lemma~\ref{lem_acc} to~\eqref{GT_VR_III_0} to obtain: $\forall T\geq 2q$,
\begin{align}\label{GT_VR_III_1}
&\sum_{t=2}^{T+1}\E\big[\|\y_{t} -\mb{J}\y_{t}\|^2\big] \nonumber\\
\leq&~\frac{2\E\big[\|\mb{y}_{2}-\mb{J}\mb{y}_{2}\|^2\big]}{1-\lambda^2}
+ \frac{8\lambda^2L^2}{(1-\lambda^2)^2}\sum_{t=2}^{T}\E\big[\|\x_{t} - \mb{x}_{t-1}\|^2\big]
+ \sum_{t=2}^{T}\i_{\{\!\bmod(t,q)=1\}}\left(\frac{6\lambda^2\E[\Upsilon_t]}{1-\lambda^2}
+ \frac{8\lambda^2\E[\Upsilon_{t-1}]}{(1-\lambda^2)^2}\right)
\nonumber\\
=&~\frac{2\E\big[\|\mb{y}_{2}-\mb{J}\mb{y}_{2}\|^2\big]}{1-\lambda^2}
+ \frac{8\lambda^2L^2}{(1-\lambda^2)^2}\sum_{t=2}^{T}\E\big[\|\x_{t} - \mb{x}_{t-1}\|^2\big]
+ \sum_{z=1}^{R-1}\left(\frac{6\lambda^2}{1-\lambda^2}\E\big[\Upsilon_{zq+1}\big]
+ \frac{8\lambda^2}{(1-\lambda^2)^2}\E\big[\Upsilon_{zq}\big]\right).
\end{align}
Note that
\begin{align}\label{VR_GT_init}
\E\big[\| \mb{y}_{2}-\mb{J}\mb{y}_{2}\|^2\big]    
= \E\big[\|(\W^K-\J)\mb{v}_1\|^2\big]  
\leq \lambda^2\E\big[\|\mb{v}_1\|^2\big]  
= \lambda^2\|\nf(\x_1)\|^2
+ \lambda^2\E\big[\Upsilon_1\big]
\end{align}
Applying~\eqref{VR_GT_init} to~\eqref{GT_VR_III_1} gives the following: $\forall T\geq 2q$,
\begin{align}\label{GT_VR_III}
&\sum_{t=2}^{T+1}\E\big[\|\y_{t} -\mb{J}\y_{t}\|^2\big] \nonumber\\
\leq&~\frac{2\lambda^2n\zeta^2}{1-\lambda^2}
+ \frac{8\lambda^2L^2}{(1-\lambda^2)^2}\sum_{t=2}^{T}\E\big[\|\x_{t} - \mb{x}_{t-1}\|^2\big] 
+ \frac{2\lambda^2}{1-\lambda^2}\E[\Upsilon_1]
+ \frac{6\lambda^2}{1-\lambda^2}\sum_{z=1}^{R-1}\E\big[\Upsilon_{zq+1}\big]
+ \frac{8\lambda^2}{(1-\lambda^2)^2}\sum_{z=1}^{R-1}\E\big[\Upsilon_{zq}\big]
\nonumber\\
\leq&~\frac{2\lambda^2n\zeta^2}{1-\lambda^2}
+ \frac{8\lambda^2L^2}{(1-\lambda^2)^2}\sum_{t=2}^{T}\E\big[\|\x_{t} - \mb{x}_{t-1}\|^2\big] 
+ \frac{6\lambda^2}{1-\lambda^2}\sum_{z=0}^{R-1}\E\big[\Upsilon_{zq+1}\big]
+ \frac{8\lambda^2}{(1-\lambda^2)^2}\sum_{z=1}^{R-1}\E\big[\Upsilon_{zq}\big]
\end{align}

\noindent
\textbf{Step 4: bounding $\E\big[\Upsilon_t\big]$.}
The derivations in this step essentially repeat the proof of Lemma~\ref{lem_VR_SARAH}.
Recall the definition of $\Upsilon_t$ in~\eqref{def_vr_sum}.
Consider any $t\geq1$ such that $\bmod(t,q) \neq 1$ and define:~$\forall i\in\mc{V}$,
\begin{align*}
\mb{d}_t^{i,s} :
= \n G_i(\x_t^i,\X_{i,s}^t) - \n G_i(\x_{t-1}^i,\X_{i,s}^t),
\qquad
\mb{d}_t^{i} := \frac{1}{b}\sum_{s=1}^{b}\mb{d}_t^{i,s}.
\end{align*}
By the update of~$\mb{v}_t^i$, we observe that
\begin{align}\label{vs1}
\E\big[\Upsilon_t|\F_t\big]  
=&~\sum_{i=1}^n\E\!\left[\left\|\mb{d}_t^{i}+\mb{v}_{t-1}^i - \n f_i(\x_t^i)\right\|^2\big|\F_t\right]   \nonumber\\
=&~\sum_{i=1}^n\E\!\left[\left\|\mb{d}_t^{i} - \n f_i(\x_t^i) + \n f_i(\x_{t-1}^i)  +\mb{v}_{t-1}^i - \n f_i(\x_{t-1}^i)\right\|^2 \big|\F_t\right] \nonumber\\
=&~\sum_{i=1}^n\E\!\left[\left\|\mb{d}_t^{i} - \n f_i(\x_t^i) + \n f_i(\x_{t-1}^i)\right\|^2 \big|\F_t\right] 
+ \Upsilon_{t-1}
\nonumber\\
=&~\frac{1}{b^2}\sum_{i=1}^n\sum_{s=1}^b\E\!\left[\big\|\mb{d}_t^{i,s} - \n f_i(\x_t^i) + \n f_i(\x_{t-1}^i)\big\|^2 \big|\F_t\right] 
+ \Upsilon_{t-1}
\nonumber\\
\leq&~\frac{1}{b^2}\sum_{i=1}^n\sum_{s=1}^b\E\!\left[\big\|\mb{d}_t^{i,s}\big\|^2 \big|\F_t\right] 
+ \Upsilon_{t-1},
\end{align}
where the above derivations follow a very similar line of arguments as in~\eqref{v1} and thus we omit the detailed explanations. Taking the expectation of~\eqref{vs1} and using the mean-squared smoothness of $\n G(\cdot,\bxi)$, we obtain
\begin{align}\label{vs11}
\E\big[\Upsilon_t\big]
\leq \frac{L^2}{b}\E\big[\|\x_t - \x_{t-1}\|^2\big]
+ \E\big[\Upsilon_{t-1}\big].
\end{align}
For convenience, we define
$\vp_t 
:= \left\lfloor \frac{t-1}{q} \right\rfloor,
\forall t\geq1.$
It can be verified that
$\vp_t q + 1 \leq t \leq (\vp_t + 1)q,
\forall t\geq1.$
Recursively applying~\eqref{vs11} from $t$ to $(\varphi_t+1)$, we obtain
\begin{align}\label{vs2}
\E\big[\Upsilon_t\big]  
\leq
\frac{L^2}{b}\sum_{j=\varphi_t+1}^{t}\E\big[\|\x_j - \x_{j-1}\|^2\big] + \E\big[\Upsilon_{\varphi_t}\big].
\end{align}
In particular, taking $t = zq$ for some $z\in\mathbb{Z}^{+}$ in~\eqref{vs2} gives
\begin{align}\label{vs3}
\E\big[\Upsilon_{zq}\big]  
\leq
\frac{L^2}{b}\sum_{j=(z-1)q+2}^{zq}\E\big[\|\x_j - \x_{j-1}\|^2\big] + \E\big[\Upsilon_{(z-1)q+1}\big].
\end{align}
We sum up~\eqref{vs3} over~$z$ from $1$ to $R$ to obtain: 
\begin{align}\label{vs4}
\sum_{z=1}^{R}\E\big[\Upsilon_{zq}\big]  
\leq&~\frac{L^2}{b}\sum_{z=1}^{R}\sum_{j=(z-1)q+2}^{zq}\E\big[\|\x_j - \x_{j-1}\|^2\big] + \sum_{z=1}^{R}\E\big[\Upsilon_{(z-1)q+1}\big]
\nonumber\\
\leq&~\frac{L^2}{b}\sum_{t=2}^{T}\E\big[\|\x_t - \x_{t-1}\|^2\big] + \sum_{z=0}^{R-1}\E\big[\Upsilon_{zq+1}\big].
\end{align}
\noindent
\textbf{Step 5: putting bounds together.} Applying~\eqref{vs4} to~\eqref{GT_VR_III} gives the following: $\forall T\geq 2q$,
\begin{align}\label{GT_VR_IV}
&\sum_{t=2}^{T+1}\E\big[\|\y_{t} -\mb{J}\y_{t}\|^2\big] 
\nonumber\\
\leq&~\frac{2\lambda^2n\zeta^2}{1-\lambda^2}
+ \left(1+\frac{1}{b}\right)\frac{8\lambda^2L^2}{(1-\lambda^2)^2}\sum_{t=2}^{T}\E\big[\|\x_t - \x_{t-1}\|^2\big]
+ \frac{6\lambda^2}{1-\lambda^2}\sum_{z=0}^{R-1}\E\big[\Upsilon_{zq+1}\big]
+ \frac{8\lambda^2}{(1-\lambda^2)^2} \sum_{z=0}^{R-1}\E\big[\Upsilon_{zq+1}\big]
\nonumber\\
\leq&~\frac{2\lambda^2n\zeta^2}{1-\lambda^2}
+ \frac{16\lambda^2L^2}{(1-\lambda^2)^2}\sum_{t=2}^{T}\E\big[\|\x_t - \x_{t-1}\|^2\big]
+ \frac{14\lambda^2}{(1-\lambda^2)^2} \sum_{z=0}^{R-1}\E\big[\Upsilon_{zq+1}\big].
\end{align}
Plugging Lemma~\ref{lem_x_diff} into~\eqref{GT_VR_IV} finishes the proof.
\end{proof}

\subsubsection{Proof of Lemma~\ref{lem_GT}(\ref{lem_GT_O})}
Lemma~\ref{lem_GT}(\ref{lem_GT_O}) follows from~\eqref{GT_VR_V} by $\Upsilon_{zq+1} \leq n\nu^2/B$ for all $z\in\mathbb{Z}^+$.
\subsubsection{Proof of Lemma~\ref{lem_GT}(\ref{lem_GT_E})}
Lemma~\ref{lem_GT}(\ref{lem_GT_E}) follows from~\eqref{GT_VR_V} by $\Upsilon_{zq+1} = 0$ for all $z\in\mathbb{Z}^+$.

\end{document}